\newtheorem{thm}{Theorem}[section]
\newtheorem{prop}[thm]{Proposition}
\newtheorem{rem}[thm]{Remark}
\newtheorem{lemma}[thm]{Lemma}
\newcommand{\p}{\partial}
\newcommand{\na}{\nabla}
\newcommand{\De}{\Delta}
\newcommand{\ddt}{\frac{d}{dt}}
\newcommand{\R}{\mathbb{R}}
\newcommand{\Z}{\mathbb{Z}}
\newcommand{\cd}{\cdot}
\newcommand{\U}{\vec{u}}
\newcommand{\B}{\vec{b}}
\newcommand{\xe}{\vec{\xi}}
\newcommand{\F}{\mathcal{F}}
\newcommand{\G}{\mathcal{G}}
\newcommand{\LH}{\langle t-\tau\rangle}
\newcommand{\LHH}{\langle t\rangle}
\numberwithin{equation}{section}
\subjclass[2010]{35Q35, 76W05}
\keywords{MHD equations,   global small solution, decay estimate}
\begin{document}
\title[On the temporal decay for MHD]{On the temporal decay for the 2D non-resistive incompressible MHD equations}

\author[  R. Wan]{ Renhui Wan}
\address{ School of Mathematical Sciences, Nanjing Normal University, Nanjing 210023, China}

\email{rhwanmath@163.com,wrh@njnu.edu.cn}

\vskip .2in
\begin{abstract}
Califano-Chiuderi \cite{CC} gave the numerical observation
that the energy of the MHD equations   is dissipated at a rate
independent of the ohmic resistivity, which was first proved by \cite{RWXZ}[Ren et al., J. Funct. Anal., 2014] (the initial data near $(0,\vec{e}_1)$, $\vec{e}_1=(1,0)$). Precisely, they showed some explicit decay rates of solutions in $L^2$ norm. So a nature question is whether the obtained decay rates in \cite{RWXZ} is optimal. In this paper, we 
 aim at giving  the explicit decay rates of solutions in both $L^2$ norm and $L^\infty$ norm. In particular, our decay rate in terms of $L^2$ norm  improves the previous work \cite{RWXZ}.
\end{abstract}

\maketitle

\section{Introduction}
\label{intro}
\vskip .1in
In this paper, we are concerned with the Cauchy problem for the
two-dimensional (2D) non-resistive  incompressible   MHD equations  given by
\begin{equation} \label{Oringal}
\left\{
\begin{array}{l}
\partial_t \vec{u} + \vec{u}\cdot\nabla \vec{u} -  \Delta \vec{u} +\nabla p = \vec{H}\cdot\na \vec{H},\  (t,x,y)\in \R^+\times \R\times \R, \\
\partial_t \vec{H} + \vec{u}\cdot\nabla \vec{H}=\vec{H}\cd\na \vec{u},\\
{\rm div}\vec{u}={\rm div}\vec{H}=0, \\
\vec{u}|_{t=0} =\vec{u}_0(x,y), \quad \vec{H}|_{t=0}=\vec{H}_0(x,y),
\end{array}
\right.
\end{equation}
where $\vec{u}=(u,v)\in \R^2$ stand for the 2D velocity field,
$p$ the pressure and
$\vec{H}=(H_1,H_2)\in \R^2$  the magnetic field.
(\ref{Oringal}) can
be applied to model  plasmas when the plasmas are strongly collisional, or the resistivity since
these collisions are extremely small, see \cite{Cab} for more explanations to this model.
 For the MHD equations with both velocity dissipation and magnetic diffusion, \cite{DL} and \cite{ST} obtained  the  local and global well-posedness of solutions to that model, respectively.
 In both 2D and 3D, Chemin et al \cite{CMRR} showed the local existence of solutions to (\ref{Oringal})  with the initial data in critical Besov space (see \cite{Wan} for the uniqueness of solutions in 2D).
 However,
since there is no dissipation or damping in the equation for $\vec{H}$,  global well-posedness of smooth solutions to (\ref{Oringal}) even under  small assumption of the initial data has become  an issue that needed to be resolved.
\vskip .1in
Based on  Lagrangian coordinates and the techniques on anisotropic Besov spaces,
Lin, Xu and Zhang \cite{LXZ} first established the global well-posedness of small solutions
after translating the magnetic field by a constant vector and assuming that the initial magnetic field satisfies sort of admissible condition.
If we set $\vec{H}=\B+\vec{e}_1$, where $\B=(b,B)$, then the investigated model in \cite{LXZ} is
 \begin{equation} \label{mhd}
\left\{
\begin{array}{l}
\partial_t \vec{u} + \vec{u}\cdot\nabla \vec{u} -  \Delta \vec{u} +\nabla p = \vec{b}\cdot\na \vec{b}+\p_x\vec{b},\  (t,x,y)\in \R^+\times \R\times \R, \\
\partial_t \vec{b} + \vec{u}\cdot\nabla \vec{b}=\vec{b}\cd\na \vec{u}+\p_x\vec{u},\\
{\rm div}\vec{u}={\rm div}\vec{b}=0, \\
\vec{u}|_{t=0} =\vec{u}_0(x,y), \quad \vec{b}|_{t=0}=\vec{b}_0(x,y).
\end{array}
\right.
\end{equation}
Meanwhile,
\begin{equation}\label{1010}
\int(\vec{b}_0-\vec{e_1})(Z(t,\alpha))dt=0\ \ {\rm for\ \  all} \ \ \alpha\in \R^2\times\{0\}
\end{equation}
is the admissible condition, where $Z(t,\alpha)$ is determined by
$$\frac{d}{dt}Z(t,\alpha)=\vec{b}_0(Z(t,\alpha)),
\ Z(t,\alpha)|_{t=0}=\alpha.$$
 Later, by carefully exploiting the divergence structure of the velocity, Ren, Wu, Xiang and Zhang \cite{RWXZ} removed (\ref{1010}) and
obtained some decay estimates of solutions as follows:
\begin{equation}\label{1.1}
\|\p_x^k \B(t)\|_{L^2(\R^2)}
+\|\p_x^k \U(t)\|_{L^2(\R^2)}\lesssim\ \LHH^{-\frac{1+2k}{4}+\epsilon},
\end{equation}
where $\epsilon\in (0,1/2)$ and $k=0,1,2$.   (\ref{1.1})  confirms the numerical observation
that the energy of the MHD equations is dissipated at a rate
independent of the ohmic resistivity, see \cite{CC}.
 Zhang \cite{Zhangt14} gave a more elementary proof
for the global existence and uniqueness of solutions. Motivated by \cite{BSS},   Zhang \cite{Zhangt16} also proved
 global well-posedness  with  large background magnetic field by using the techniques in \cite{Zhangt14}  and the classical method for the oscillatory integrals.
 \vskip.1in
  Global well-posedness and large time behavior  of solutions to the 3D case have been recently treated in  Abidi-Zhang \cite{AZ} and Deng-Zhang \cite{DZ}, where the method also works for the 2D case. As a matter of fact, \cite{AZ} showed
  $$\|\U(t)\|_{H^2(\R^3)}+\|\B(t)\|_{H^2(\R^3)}\lesssim\ \LHH^{-\frac{1}{4}},$$
  which  corresponds  to the case $\epsilon=0$ in (\ref{1.1}).  By exploiting H\"{o}rmander's version of Nash-Moser iteration scheme,
 \cite{DZ} derived the decay rate of  solutions in both $L^\infty$ and $L^2$ norms. In particular,  the decay rate in the $L^2$ norm  is optimal in sense that it coincides with that of the linear system. Indeed, the decay rate of solutions in \cite{DZ} can be given as follows:
 \begin{equation}\label{dzd}
 \begin{aligned}
 \|\U(t)\|_{W^{2,\infty}(\R^3)}\le\ C_\kappa\LHH^{-\frac{5}{4}+\kappa},&
 \ \|\B(t)\|_{W^{2,\infty}(\R^3)}\le\ C_\kappa\LHH^{-\frac{3}{4}+\kappa},\\
 \|\U(t)\|_{H^2(\R^3)}+\|\B(t)\|_{H^2(\R^3)}\le\ C\LHH^{-\frac{1}{2}},&\
 \|\na \U(t)\|_{L^2(\R^3)}\le\ C\LHH^{-1},
 \end{aligned}
 \end{equation}
 where the positive constant $\kappa$ is sufficiently small provided that the regularity of solutions is large enough.
 We refer the interested reader to \cite{PZZ,Wu15,Wu17,XZ15} and references therein  for other related works.
 %The idea in [AZ] and [DZ] se be used to the 2D case
 \vskip.1in
\vskip .1in
Let
$$V=(u,v,b,B),\ \ V_0=(u_0,v_0,b_0,B_0).$$
We define $S_1\times S_2$ with their norm as follows:
$$
\begin{aligned}
\|\U\|_{S_1}=\sup_{t\ge 0}\Big\{&\|\vec{u}\|_{H^N}+\|\na\vec{u}\|_{L^2_t(H^N)}
+\langle t\rangle^\frac{1}{2}\|\vec{u}\|_{L^2}
+\langle t\rangle^\frac{3}{4}\|\p_yu\|_{L^2}\\
&+\langle t\rangle(\|\p_x\vec{u}\|_{H^2}
+\|\vec{u}\|_{\F L^1})+\langle t\rangle^\frac{5}{4}\|\p_xu\|_{\F L^1}\Big\};\\
\|\B\|_{S_2}=\sup_{t\ge 0}
\Big\{&\|\vec{b}\|_{H^N}+\|\p_x\vec{b}\|_{L^2_t(H^{N-1})}
+\langle t\rangle^\frac{1}{4}\|b\|_{L^2}
+\langle t\rangle^\frac{1}{2}(\||\na|^{-1}\langle\na\rangle b\|_{\F L^1}+\|B\|_{L^2})\\
&+\langle t\rangle^\frac{3}{4}\|\p_xb\|_{H^1}
+\langle t\rangle(\|\p_xB\|_{L^2}+\|B\|_{\F L^1}+\|\mathcal{R}_1\langle\na\rangle b\|_{\F L^1})\Big\};\\
\|V\|_{3}=\|\U\|_{S_1}&+\|\B\|_{S_2},\ \|V_0\|_3=\|V_0\|_{H^N}
+\|V_0\|_{W^{5,1}},
\end{aligned}
$$
where $\|f\|_{\F L^1}=\|\widehat{f}\|_{L^1}$,  $\mathcal{R}_1$ stands for the Riesz transform, the operators $|\na|$ and $\langle\na\rangle$ are standard.
%$$(\widehat{\mathcal{R}_1f},\widehat{\mathcal{R}_2f}):
%=(\frac{-i\xi}{|\xe|}\widehat{f},\frac{-i\eta}{|\xe|}\widehat{f}).$$
%The norm $\|f\|_{FL^1}:=\|\widehat{f}\|_{L^1}$, both $\widehat{g}$ %and $F\{g\}$ stands for the Fourier transform of the function $g$ %in the present paper.
%$\p_1=\p_x$, $\p_2=\p_y$.
\vskip.1in
Now, we give the main result of this paper.
\begin{thm}\label{t1}
Let $N\ge 8$ and $(\U_0,\B_0)\in H^N(\R^2)\cap W^{5,1}(\R^2)$ satisfying ${\rm div} \vec{u}_0={\rm div} \vec{b}_0=0$. Then there exists a sufficiently small positive constant $c_0$ such that if
$$\|V_0\|_3\le \ c_0,$$
then (\ref{mhd}) has a unique global solution $(\U,\B)\in S_1\times S_2$. Moreover,
\begin{equation}\label{resu1}
\|V\|_3\lesssim\ c_0.
\end{equation}
\end{thm}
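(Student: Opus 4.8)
The plan is to run a standard continuity/bootstrap argument on the functional $\|V\|_3$: assuming a priori that $\|V\|_3 \le \delta$ for some small $\delta$ (with $c_0 \ll \delta$), I will show that in fact $\|V\|_3 \lesssim c_0 + \delta^2$, so that choosing $\delta$ small enough closes the estimate and, combined with the local existence theory of \cite{CMRR,Wan}, yields the global solution. The heart of the matter is therefore the linear decay structure of the system \eqref{mhd}. First I would diagonalize the linearized operator: applying the Leray projection to kill the pressure, the linear part couples $\U$ and $\B$ through the $\partial_x$ terms, and the characteristic frequencies behave like $\pm i\xi_1/|\xe|$ modulated by the dissipation $e^{-|\xe|^2(t-\tau)}$ in $\U$. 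This is exactly the degenerate-dispersive--parabolic structure reflected in the weights $\FF$ and $\FFF$ defined at the top of the paper; the anisotropy (better decay for $\partial_y u$ than for $\partial_x u$, the half-power gap between $b$ and $B$) comes from the fact that the $x$-direction carries the (non-decaying in $L^2$ at low frequency) wave part while transverse frequencies are genuinely damped.

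Next I would set up the Duhamel formula $V(t) = e^{tL}V_0 + \int_0^t e^{(t-\tau)L}\,\mathbb{P}\,\mathcal{N}(V)(\tau)\,d\tau$, where $\mathcal{N}$ collects all the quadratic terms $\U\cdot\na\U$, $\B\cdot\na\B$, $\U\cdot\na\B$, $\B\cdot\na\U$. For the linear flow I would prove, for each norm appearing in the definitions of $S_1$ and $S_2$, the dispersive estimate $\| (\text{norm of } e^{tL}V_0)\| \lesssim \LHH^{-\text{(rate)}}\|V_0\|_{H^N\cap W^{5,1}}$ by splitting into low and high frequencies: on low frequencies one uses the $L^1\to L^\infty$-type decay of the oscillatory-parabolic kernel together with the $W^{5,1}$ control of the data (the $5$ derivatives supply the integrability of the symbol near the singular set $\xi_1=0$), and on high frequencies the parabolic smoothing in $\U$ and the energy bound in $\B$. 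The $\F L^1$ norms are handled directly on the Fourier side, which is why they are natural here and why they play the role usually played by $L^\infty$. For the Duhamel term I would insert the same kernel bounds and estimate the bilinear terms by Hölder/Young inequalities in frequency, distributing derivatives so that each factor is measured in one of the controlled norms; the time integral $\int_0^t \LH^{-a}\LHH_\tau^{-b}\,d\tau \lesssim \LHH^{-\min(a,b-1,a+b-1)}$ (after a near-$t$ / near-$0$ split) reproduces the claimed weight provided the exponents balance — this is where the precise choice of powers $1/4,1/2,3/4,1,5/4$ in $S_1,S_2$ is forced, and where $N\ge 8$ is used to have enough room for the high-frequency interactions.

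The main obstacle I expect is the slowest-decaying quantities, namely $\LHH^{1/4}\|b\|_{L^2}$ and the coupled pair $\LHH\|\p_xB\|_{L^2}$, $\LHH^{1/2}\|B\|_{L^2}$: since $b$ satisfies a transport-type equation with \emph{no} dissipation, its decay must be extracted entirely from the structure $\p_x\U \to \p_t b$ plus the divergence-free condition (the ``divergence structure of the velocity'' highlighted in \cite{RWXZ}), so controlling $\|b\|_{L^2}$ requires integrating $\|\p_x u\|_{\F L^1}$ (hence the $\LHH^{5/4}$ weight on that term) rather than any direct smoothing. Closing this loop — showing the nonlinearity does not destroy the borderline $\LHH^{-1/4}$ rate — is the delicate point, and it is precisely the place where the improvement over \eqref{1.1} (which had the loss $\LHH^{\epsilon}$) is achieved: instead of absorbing a small power of $t$ one must track the $\F L^1$ norms, whose time weights are one quarter-power better and exactly compensate. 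Once all norms in $\|V\|_3$ are bounded by $c_0 + \|V\|_3^2$, a standard fixed-point/continuation argument finishes the proof, and \eqref{resu1} is the resulting a priori bound.
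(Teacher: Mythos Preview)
Your bootstrap framework and Duhamel/spectral setup match the paper, but there is a genuine gap in the nonlinear estimates. You propose to close all bilinear terms by ``H\"older/Young inequalities in frequency, distributing derivatives so that each factor is measured in one of the controlled norms'', yet for the term $v\,\partial_y b$ (appearing in $G^1$ in the Duhamel formula for $b$, and analogously in $G^1$ for $u$) this fails outright: the multiplier $M_2$ acting on $G^1$ gives only $\langle t-\tau\rangle^{-1/4}$ in $\F L^2$, while $\partial_y b$ carries no controlled decay --- $\|\partial_y b\|_{L^2}$ is not part of $S_2$, and $b$ itself decays only like $\langle\tau\rangle^{-1/4}$. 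No redistribution of derivatives makes the resulting time convolution reproduce the target rate; the same obstruction recurs for $\|\p_y u\|_{L^2}$, $\||\na|^{-1}\langle\na\rangle b\|_{\F L^1}$, and $\|\mathcal{R}_1\langle\na\rangle b\|_{\F L^1}$. The mechanism you name --- integrating $\|\partial_x u\|_{\F L^1}$ --- is indeed what keeps the $H^N$ \emph{energy} bounded (the paper reduces the $H^N$ commutators, via repeated integration by parts exploiting $\p_y v=-\p_x u$ and $\p_y B=-\p_x b$, to $\int_0^t(\|v\|_{L^\infty}^2+\|B\|_{L^\infty}^2+\|\partial_x u\|_{L^\infty})\,d\tau<\infty$), but it does not touch this Duhamel obstruction.

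The paper's resolution is a normal-form manipulation specific to $v\,\partial_y b$. One first splits $v=v_{<\langle\tau\rangle^{-8}}+P_\backsim v+v_{>2\langle\tau\rangle^{-0.05}}$; the extreme frequency pieces gain smallness from the cutoff. For the middle piece one writes $P_\backsim v=P_\backsim(\vec{R'}\cdot\partial_x\U)$, substitutes $\partial_x\U=\partial_\tau\B+\U\cdot\nabla\B-\B\cdot\nabla\U$ from the $\B$-equation, and integrates by parts in $\tau$. The operator identities $\partial_\tau M_2=\partial_x M_1$ and $\partial_\tau M_1=\partial_x M_3$ convert the time derivative into an extra $\partial_x$ on the multiplier, which is exactly the missing half-power of decay. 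This step generates a boundary term at $\tau=0$ (quadratic in the data) and cubic remainders, so the actual closing inequality is $\|V\|_3\lesssim \|V_0\|_3+\|V_0\|_3^2+\|V\|_3^{3/2}+\|V\|_3^3$, not the pure $c_0+\delta^2$ you wrote; the continuity argument then proceeds as you describe.
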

\begin{rem}\label{r100}
(1)\  Due to $\|f\|_{L^\infty}\le\|\widehat{f}\|_{L^1}$,
we can get the decay rates in  $L^\infty$ norm, and then fulfill
 the decay rates in $L^p$ $(2\le p\le \infty)$ norm by interpolation.    In particular,
\begin{equation}\label{1.3}
\|\p_x^k\U(t)\|_{L^2(\R^2)}+
\|\p_x^kB(t)\|_{L^2(\R^2)}\lesssim\ c_0\LHH^{-\frac{1+k}{2}},\ \|\p_x^kb(t)\|_{L^2(\R^2)}\lesssim\ c_0\LHH^{-\frac{1+2k}{4}},\ k=0,1,
\end{equation}
which obviously improves (\ref{1.1}). In fact,  our method works for the case $k\ge2$ in (\ref{1.3}), but we choose not to pursue on this direction here.\\
(2)\  Here the $L^p$ $(2\le p\le\infty)$ norm of $B$ decays faster than the associated  norm of $b$, whereas this type result is not proved in  \cite{AZ,DZ,RWXZ}. \\
(3)\ Our idea seems hard to be applied for the 3D case,  since the divergence structure of the velocity field in this case  can not be effectively used.
\end{rem}
\begin{rem}\label{rr1}
Comparing with the result on the 2D compressible MHD equations \cite{Wu17}, we can see that the estimates in $H^N$ norm do not grow over time  is the principal difference.
\end{rem}
Formally, the approach in the present work is similar to the works \cite{Wu15} and \cite{Wu17}, but there are many differences in the proof. Let us now outline some principal differences between \cite{Wu15,Wu17} and the present work.
\vskip.1in
In \cite{Wu15}, Wu-Wu-Xu considered 2D incompressible  MHD equations with only a  velocity damping term when the initial data is close to $(0,\vec{e}_1)$. Not only did they obtain  the global well-posedness of solutions, but also some decay estimates of solutions. The  part on the velocity (i.e., $\B=0$) is the 2D incompressible damped Euler equations, whose solution even in  $L^2$ norm has
exponential decay estimate by only using energy method provided that the initial data is sufficiently small.  Here the  part on the velocity (i.e., $\B=0$ in (\ref{Oringal}))  is the 2D incompressible Navier-Stokes equations.  However,  the decay rate
of solutions to this model   are  polynomial. In particular, the decay rate of  solution in  $L^2$ norm is
slower than $\LHH^{-1}$. By  the virtue that it is not integrable, it seems more difficult than \cite{Wu15} to obtain global well-posedness of solutions.
\vskip.1in
 Besides the way dealing with the pressure, there are some other differences between our work and \cite{Wu17}.
 In our work, we can make use of the structure of incompressibility of the fluid to control the $H^N$ estimate of solutions  by some special  norms, the decay rate of which is integrable, and then achieve the goal that the $H^N$ estimates of solutions  do not grow over time. In this process,  it is the new decay estimate of $\p_x u$ in $\F L^1$ norm (or $L^\infty$ norm) that plays an important role, while this idea also works for the model studied in \cite{Wu15}.   On the other hand, we need to establish some other new decay estimates including
 \begin{equation}\label{11111}
\||\na|^{-1}\langle\na\rangle b(t)\|_{\F L^1}
\lesssim\ \LHH^{-\frac{1}{2}},
\ \|\mathcal{R}_1\langle\na\rangle b(t)\|_{\F L^1}+\|B(t)\|_{\F L^1}\lesssim\ \LHH^{-1}.
\end{equation}
However, this type goal seems  hard  to be fulfilled for the compressible model in \cite{Wu17}.
\vskip .1in
Let us do some comments on the proof and our idea. Firstly, all previous works dealing with the 2D incompressible or compressible  cases applied the magnetic potential equation for $\phi$ defined by
  $
 \vec{H}=(\p_y\phi,-\p_x\phi),
$
 here we do not introduce this magnetic potential equation any more.
Our idea is  considering (\ref{mhd}) as two subsystems (\ref{ub1}) and (\ref{vB}),   and then using  the   method of diagonalization via the eigenvalues and eigenvectors to these subsystems.  Secondly, by using the structure of system and integration by parts many times, we can use the  integral
 $$
 \int_0^t\|v\|_{L^\infty}^2+\|B\|_{L^\infty}^2+\|\p_xu
\|_{L^\infty} d\tau
 $$
 to control the $H^N$ estimate of solutions. Thanks to the fast decay rate of these special norms: $\|v\|_{L^\infty}^2$, $\|B\|_{L^\infty}^2$ and $\|\p_xu
\|_{L^\infty}$, we can obtain   the $H^N$ estimate  of solutions (uniformly in time). Thirdly, we shall establish some new decay estimate like (\ref{11111}) to get the different large-time behavior of $b$ and $B$ in $\F L^1$ norm (or $L^\infty$ norm), which  is not obtained for the compressible MHD equations in \cite{Wu17}.
 \vskip .1in
\begin{proof}[Proof of Theorem \ref{t1}]
Thanks to \cite{CMRR} and \cite{Wan}, one can easily get the local well-posedness
of solutions to (\ref{mhd}).
Claim :
\begin{equation}\label{endend}
\|V\|_3\lesssim\ \|V_0\|_3+\|V_0\|_3^2+\|V\|_3^\frac{3}{2}+\|V\|_3^3,
\end{equation}
the proof of which is provided at the end of the ninth section,
then we can conclude the proof of Theorem \ref{t1} by the standard continuity argument.
\end{proof}
\vskip.1in
The present paper is structured as follows:\\
 In the second section,  we provide the definitions of some operators  and  some spaces. The third section devotes to giving  the integral representation of solutions. The fourth section gives several  decay estimates on some operators  and nonlinear decay estimates, which is an essential part in this paper. From the fifth section to the ninth section, we   devotes to showing (\ref{endend}).
 In the Appendix,  we   give the proofs of some lemmas which are used in the previous parts.
\vskip .1in
Let us complete this section by describing the notations we shall use in this paper.\\
{\bf Notations} We use $A\lesssim B$ to denote the statement that $A\le CB$ for some absolute constant $C>0$.
$A\thickapprox B$  means   $A\lesssim B$ and $B\lesssim A$.
 $\langle t\rangle$ means $\sqrt{1+t^2}$. We  use $\mathcal{R}_{ij}=\mathcal{R}_i\mathcal{R}_j$, where $\mathcal{R}_i$ and $\mathcal{R}_j$  stand for the Riesz transform.
 We shall denote by $(a|b)$  the $L^2$ inner product
of $a$ and $b$, and
$$(a|b)_{\dot{H}^s}\stackrel{\rm def}{=}(|\na|^s a\big||\na|^s b),
 \ {\rm and}\ \ (a|b)_{\dot{H}^m}\stackrel{\rm def}{=} (\p^m a|\p^mb)\  (m {\rm \ is\  an\  integer}),
$$
$$(a|b)_{H^s}\stackrel{\rm def}{=}(a|b)+(a|b)_{\dot{H}^s}.$$
\section{Preliminaries}
\label{pre}
\vskip .1in
The fractional Laplacian operator $|\na|^\alpha=(-\Delta)^\frac{\alpha}{2}$ is defined through the Fourier transform, namely,
$$\widehat{|\na|^\alpha f}(\xi,\eta)\stackrel{\rm def}{=}|\xe|^\alpha \widehat{f}(\xi,\eta),$$
where $\xe=(\xi,\eta)$ and the  Fourier transform  is given by
$$\widehat{f}(\xi,\eta)\stackrel{\rm def}{=}\int_{\mathbb{R}^2}e^{-i(x\xi+y\eta)}f(x,y)d\xi d\eta.$$
 We also use $\F \{f\}$ to stand for the Fourier transform for some convenience. We define
 $$\|f\|_{\F L^p}\stackrel{\rm def}{=}\|\widehat{f}\|_{L^p},\ 1\le p\le\infty.$$
\vskip.1in
Let $\psi(\xe)$ be a smooth bump function adapted to $\{|\xe|\le 2\}$ and equal to 1 on $\{|\xe|\le 1\}$. For $N>0$,  we define the Fourier multipliers
$$\widehat{P_{\le N}f}(\xi,\eta)=\psi(\xi/N,\eta/N)\widehat{f}(\xi,\eta),
\ \widehat{P_{>N}f}(\xi,\eta)=\big(1-\psi(\xi/N,\eta/N)\big)\widehat{f}(\xi,\eta),$$
$$\widehat{P_{N}f}(\xi,\eta)=\big(\psi(\xi/N,\eta/N)
-\psi(2\xi/N,2\eta/N)\big)\widehat{f}(\xi,\eta),$$
and $P_{<N}$ and $P_{\ge N}$ can be  defined similarly. We also define $$P_{M<\cdot\le N}=P_{\le N}-P_{\le M}$$
when $M<N$.  We will usually apply these multipliers when $N$ and $M$ are dyadic numbers (i.e., of the form $2^\Z$ in general). In particular, all summation over $N$ are understood to be over dyadic numbers.
%Thanks to these definitions, we have
%$$\|f\|_{\dot{H}^s}\thickapprox \big(\sum_{N> %0}N^{2s}\|P_Nf\|_{L^2}^2\big)^\frac{1}{2},$$
%where $\dot{H}^s$ stands for the homogeneous Sobolev space.
\vskip.1in
When $1\le p\le \infty$, we  define
$$\|f\|_{W^{k,p}(\R^2)}\stackrel{\rm def}{=}\| f\|_{L^p(\R^2)}+\||\na|^{k} f\|_{L^p(\R^2)}\ (k>0),\ \|f\|_{\dot{W}^{k,p}(\R^2)}\stackrel{\rm def}{=}\||\na|^{k} f\|_{L^p(\R^2)},\ (k\in\Z).
$$
For the special case $p=2$,  $W^{k,p}(\R^2)$ and $\dot{W}^{k,p}(\R^2)$ reduces to $H^{k}(\R^2)$  and $\dot{H}^k(\R ^2)$, respectively.
%, which is inhomogeneous (homogeneous) Sobolev space.
\vskip.1in
The following two lemmas provide Bernstein's inequality and product estimate.
\begin{lemma}
For $1\le p\le q\le \infty$ and $N>0$, then
$$\||\na|^{\pm s}P_Nf\|_{L^p(\R^2)}\thickapprox N^{\pm s}\|P_N f\|_{L^p(\R^2)},$$
$$
\|(P_N,P_{\le N})f\|_{L^q(\R^2)}
\lesssim\ N^{\frac{2}{p}-\frac{2}{q}}\|(P_N,P_{\le N})f\|_{L^p(\R^2)}.$$
\end{lemma}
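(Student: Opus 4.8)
The plan is to reduce both displays to Young's convolution inequality after realising the Littlewood--Paley projectors as convolutions against rescaled Schwartz kernels. Set $\varphi(\xe)=\psi(\xe)-\psi(2\xe)$, a smooth function supported in the annulus $\{1/2\le|\xe|\le2\}$, so that $\widehat{P_Nf}(\xe)=\varphi(\xe/N)\widehat f(\xe)$, while $\widehat{P_{\le N}f}(\xe)=\psi(\xe/N)\widehat f(\xe)$ with $\psi$ supported in $\{|\xe|\le2\}$. Writing $K$ for the inverse Fourier transform of $\varphi$ (respectively of $\psi$), a change of variables in $\R^2$ gives $P_Nf=K_N*f$ with $K_N(x,y)=N^2K(Nx,Ny)$, and $K\in\mathcal S(\R^2)$, hence $K\in L^r(\R^2)$ for every $1\le r\le\infty$.

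For the Bernstein inequality, given $1\le p\le q\le\infty$ choose $r\in[1,\infty]$ with $1+\frac1q=\frac1r+\frac1p$ (such $r$ exists precisely because $p\le q$) and apply Young's inequality: $\|P_Nf\|_{L^q}=\|K_N*f\|_{L^q}\le\|K_N\|_{L^r}\|f\|_{L^p}$. Scaling gives $\|K_N\|_{L^r}=N^{2-2/r}\|K\|_{L^r}$, and $2-\frac2r=\frac2p-\frac2q$ by the choice of $r$, which is exactly the claimed factor; replacing $\varphi$ by $\psi$ gives the same bound for $P_{\le N}$.

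For the first display I would exploit that the frequency support of $P_Nf$ stays away from the origin. After rescaling, on $\{1/2\le|\xe|\le2\}$ one has $|\xe|^{\pm s}=N^{\pm s}(|\xe|/N)^{\pm s}$, so if $\widetilde\varphi$ is a smooth bump equal to $1$ on $\{1/2\le|\xe|\le2\}$ and supported in $\{1/4\le|\xe|\le4\}$, then $\chi_\pm(\xe):=|\xe|^{\pm s}\widetilde\varphi(\xe)$ is a genuine Schwartz function --- this is the one place care is needed, since the symbol $|\xe|^{-s}$ is singular at the origin and only the localisation to an annulus makes it smooth. Consequently $\widehat{|\na|^{\pm s}P_Nf}(\xe)=N^{\pm s}\chi_\pm(\xe/N)\,\widehat{P_Nf}(\xe)$, so $|\na|^{\pm s}P_Nf=N^{\pm s}L_N*P_Nf$ with $L_N(x,y)=N^2L(Nx,Ny)$, $L=\check\chi_\pm\in L^1(\R^2)$, and $\|L_N\|_{L^1}=\|L\|_{L^1}$ is independent of $N$; Young's inequality then yields $\||\na|^{\pm s}P_Nf\|_{L^p}\lesssim N^{\pm s}\|P_Nf\|_{L^p}$. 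Running the same argument with the symbol $|\xe|^{\mp s}$ localised to the annulus recovers $P_Nf$ from $|\na|^{\pm s}P_Nf$ up to a factor $N^{\mp s}$, giving the reverse inequality and hence $\||\na|^{\pm s}P_Nf\|_{L^p}\thickapprox N^{\pm s}\|P_Nf\|_{L^p}$.

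There is no real obstacle here: this is classical Littlewood--Paley theory, and the only subtlety, already flagged, is that negative powers of $|\na|$ must be handled via the annular frequency localisation rather than directly. Every rescaled kernel carries the correct power of $N$ in its $L^r$ norm by a one-line change of variables, so the proof is essentially a bookkeeping of exponents.
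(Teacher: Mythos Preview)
Your proof is correct and is the standard argument for Bernstein's inequalities via Young's convolution inequality and rescaled Schwartz kernels. The paper does not supply a proof of this lemma at all---it is stated as a classical result---so there is nothing to compare against; your write-up is exactly what one would expect for such a proof.
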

\begin{lemma}[Product estimate \cite{KP}]
Let $n\ge 1$, $s>0$, $1\le p,r\le \infty,$ then
\begin{equation}\label{kp}
\||\na|^s( fg)\|_{L^p(\R^n)}\le C\left\{\|f\|_{L^{p_{1}}(\R^n)}
\||\na|^s g\|_{L^{p_2}(\R^n)}
+\|g\|_{L^{r_{1}}(\R^n)}\||\na|^sf\|_{L^{r_2}(\R^n)}\right\},
\end{equation}
where $1\le p_{1},r_{1}\le \infty$ such that $\frac{1}{p}=\frac{1}{p_{1}}+\frac{1}{p_{2}}=\frac{1}{r_{1}}+\frac{1}{r_{2}}$.
\end{lemma}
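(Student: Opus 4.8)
The plan is to prove \eqref{kp} by a Littlewood--Paley/Bony paraproduct analysis, using the projectors $P_N$, $P_{\le N}$ and Bernstein's inequality from Section~\ref{pre}. With $N,M$ ranging over dyadic numbers, decompose
$$fg=\sum_N P_{\le N/8}f\cdot P_N g+\sum_N P_N f\cdot P_{\le N/8}g+\sum_N P_N f\cdot \widetilde{P}_N g=:\Pi_1+\Pi_2+\Pi_3,$$
where $\widetilde{P}_N:=P_{N/8<\cdot\le 8N}$ gathers the resonant (comparable-frequency) interactions. Since interchanging $f$ and $g$ sends $\Pi_1$ to $\Pi_2$ and the exponent pair $(p_1,p_2)$ to $(r_1,r_2)$, it suffices to bound $\||\na|^s\Pi_1\|_{L^p}$ and $\||\na|^s\Pi_3\|_{L^p}$, the low-frequency contribution $\|fg\|_{L^p}$ to $\||\na|^s(fg)\|_{L^p}$ being immediate from H\"older's inequality.

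For $\Pi_1$, each summand $P_{\le N/8}f\cdot P_N g$ is Fourier-supported in an annulus of size $\sim N$, so applying $|\na|^s$ to it is comparable to multiplying by $N^s$, and H\"older's inequality together with the $L^{p_1}$-boundedness of $P_{\le N/8}$ gives $\||\na|^s(P_{\le N/8}f\cdot P_N g)\|_{L^p}\lesssim \|f\|_{L^{p_1}}\,N^s\|P_N g\|_{L^{p_2}}$. Because these annuli have bounded overlap, summing over $N$ and invoking the Littlewood--Paley square-function characterisation of $\||\na|^s\cdot\|_{L^p}$ and of $\||\na|^s\cdot\|_{L^{p_2}}$ (valid for $1<p,p_2<\infty$) controls $\||\na|^s\Pi_1\|_{L^p}$ by $\|f\|_{L^{p_1}}\||\na|^sg\|_{L^{p_2}}$; in the endpoint configurations one replaces the square function by an $\ell^1$ sum over $N$ or argues directly, using e.g. $\|P_{\le N/8}f\|_{L^\infty}\le\|f\|_{L^\infty}$. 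The identical computation applied to $\Pi_2$ produces the second term $\|g\|_{L^{r_1}}\||\na|^sf\|_{L^{r_2}}$.

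The resonant term $\Pi_3$ is where the hypothesis $s>0$ enters. Now $P_N f\cdot\widetilde{P}_N g$ is Fourier-supported only in a ball of radius $\sim N$, so for each $M$ the piece $P_M\Pi_3$ collects contributions from all $N\gtrsim M$ and $|\na|^sP_M$ gains merely $M^s$:
$$\||\na|^s\Pi_3\|_{L^p}\lesssim\sum_M M^s\Big\|P_M\!\!\sum_{N\gtrsim M}\!\!P_Nf\cdot\widetilde{P}_Ng\Big\|_{L^p}\lesssim\sum_M\sum_{N\gtrsim M}M^s\|P_Nf\|_{L^{p_1}}\|\widetilde{P}_Ng\|_{L^{p_2}}.$$
Interchanging the sums and using $\sum_{M\lesssim N}M^s\lesssim N^s$ --- a geometric sum that converges precisely because $s>0$ --- bounds this by $\sum_N\|P_Nf\|_{L^{p_1}}\big(N^s\|\widetilde{P}_Ng\|_{L^{p_2}}\big)$, and Cauchy--Schwarz in $N$ followed once more by the square-function estimates gives $\lesssim\|f\|_{L^{p_1}}\||\na|^sg\|_{L^{p_2}}$, with the same endpoint caveats. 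Adding the three bounds establishes \eqref{kp}.

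I expect the main obstacle to be the endpoint configurations admitted by the statement --- some of $p_i,r_i$ equal to $1$ or $\infty$, or $p=1$ --- where the Littlewood--Paley characterisation of $\||\na|^s\cdot\|_{L^p}$ is unavailable and the sharp classical form of the inequality must replace the $L^\infty$-norm of the ``rough'' factor by a $BMO$- or Besov-type norm; dealing with these cleanly (or observing that in every application within this paper the relevant exponents lie strictly between $1$ and $\infty$, so that only the benign range is actually needed) requires some care. Since \eqref{kp} is quoted verbatim from \cite{KP}, one may alternatively just cite it; the paraproduct argument above is the standard proof and makes transparent how the condition $s>0$ is used, namely in the resonant sum.
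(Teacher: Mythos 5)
The paper does not prove this lemma at all: it is quoted verbatim as a known result with the citation to Kato--Ponce, so there is no in-paper argument to compare against. Your paraproduct sketch is the standard modern proof of the fractional Leibniz rule and the overall architecture (high-low, low-high, resonant decomposition, with $s>0$ entering only in the resonant sum) is right; you also correctly flag that the endpoint exponents $1$ and $\infty$ in the stated range are delicate and are not actually needed in this paper. One step you should tighten before calling the argument complete: in the resonant term you pull the $L^p$ norm inside the sum over $M$ and then apply Cauchy--Schwarz in $N$, which replaces the Littlewood--Paley square functions $\|(\sum_N|P_Nf|^2)^{1/2}\|_{L^{p_1}}$ and $\|(\sum_N N^{2s}|\widetilde P_Ng|^2)^{1/2}\|_{L^{p_2}}$ by the Besov-type quantities $(\sum_N\|P_Nf\|_{L^{p_1}}^2)^{1/2}$ and $(\sum_N N^{2s}\|\widetilde P_Ng\|_{L^{p_2}}^2)^{1/2}$; the comparison between these and the Lebesgue/Sobolev norms goes the wrong way for half the range of exponents (Minkowski's inequality only gives the needed direction for $p\le 2$ on one side and $p_2\ge 2$ on the other). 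The standard fix is to keep the square functions inside the Lebesgue norms throughout, e.g.\ by a vector-valued Young/Schur argument on the kernel $(M/N)^s\mathbf{1}_{M\lesssim N}$ followed by the Littlewood--Paley characterisation, or simply to bound the rough factor by $\sup_N\|P_Nf\|_{L^{p_1}}\lesssim\|f\|_{L^{p_1}}$ and run the $\ell^1$--$\ell^2$ convolution inequality at the level of functions. With that repair (and restricting to $1<p,p_1,p_2,r_1,r_2<\infty$, which covers every use in this paper), your proof is sound; otherwise citing \cite{KP} as the paper does is the cleanest course.
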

At last, we list some basic inequalities including classical estimates of solution to the  Heat equation. Since the proof  is easy, we omit it.
\begin{lemma}\label{l2.1}
Let $n=1,2$.
(1)\  If $f\in L^1(\R^n)\cap L^2(\R^n)$, then
\begin{equation}\label{Heat}
\|e^{ct\De}f\|_{L^2(\R^n)}\lesssim t^{-\frac{n}{4}}\|f\|_{L^1(\R^n)},
\ \|e^{ct\De}f\|_{\F L^1(\R^n)}\lesssim\ \min\{t^{-\frac{n}{2}}\|f\|_{L^1(\R^n)},t^{-\frac{n}{4}}
\|f\|_{L^2(\R^n)}\}.
\end{equation}
(2)
Let $\epsilon>0$, $\sigma>\frac{n}{2}$ and $1\le r\le2$, then
$$\|f\|_{L^\frac{r}{r-1}(\R^n)}\lesssim\  \|\widehat{f}\|_{L^r(\R^n)},\ \|f\|_{L^2(\R^n)}\thickapprox \|\widehat{f}\|_{L^2(\R^n)},$$
$$\|fg\|_{H^\sigma(\R^n)}\lesssim \|f\|_{H^\sigma(\R^n)}\|g\|_{H^\sigma(\R^n)},\ \
\|\widehat{fg}\|_{L^1(\R^n)}
\le \|\widehat{f}\|_{L^1(\R^n)}\|\widehat{g}\|_{L^1(\R^n)},$$
$$\|\widehat{f}\|_{L^r(\R^2)}
\lesssim\ \|\langle \na\rangle^{\frac{2}{r}-\frac{1}{2}
  +\epsilon}f\|_{L^1_x(L^2_y)}.$$
\end{lemma}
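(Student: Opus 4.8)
The plan is to check the listed inequalities one at a time; all are classical and most are one–liners, so the only genuine work is in the last one. For the two heat estimates I would pass to the Fourier side, where $\widehat{e^{ct\De}f}(\xe)=e^{-ct|\xe|^2}\widehat f(\xe)$. H\"{o}lder's inequality then yields
$$\|e^{ct\De}f\|_{\F L^1}=\|e^{-ct|\xe|^2}\widehat f\|_{L^1}\le\min\big\{\|e^{-ct|\xe|^2}\|_{L^1}\|\widehat f\|_{L^\infty},\,\|e^{-ct|\xe|^2}\|_{L^2}\|\widehat f\|_{L^2}\big\},$$
and since $\|e^{-ct|\xe|^2}\|_{L^1(\R^n)}\thickapprox t^{-n/2}$ and $\|e^{-ct|\xe|^2}\|_{L^2(\R^n)}\thickapprox t^{-n/4}$ (elementary Gaussian integrals), while $\|\widehat f\|_{L^\infty}\le\|f\|_{L^1}$ and $\|\widehat f\|_{L^2}\thickapprox\|f\|_{L^2}$, this gives the second inequality in (\ref{Heat}). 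For the first one I would use the kernel representation $e^{ct\De}f=G_t*f$: the parabolic scaling $G_t(x)=t^{-n/2}G_1(x/\sqrt t)$ gives $\|G_t\|_{L^2(\R^n)}\thickapprox t^{-n/4}$, and Young's convolution inequality ($L^2*L^1\hookrightarrow L^2$) closes it.

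For part (2), the first two inequalities are, respectively, the Hausdorff--Young inequality applied to $\widehat f$ (recovering $f$ by Fourier inversion) and Plancherel. The algebra property $\|fg\|_{H^\sigma}\lesssim\|f\|_{H^\sigma}\|g\|_{H^\sigma}$ for $\sigma>n/2$ I would deduce from the product estimate (\ref{kp}) with $p=2$ and $p_1=r_1=\infty$, $p_2=r_2=2$, which bounds $\||\na|^\sigma(fg)\|_{L^2}$ by $\|f\|_{L^\infty}\||\na|^\sigma g\|_{L^2}+\|g\|_{L^\infty}\||\na|^\sigma f\|_{L^2}$, together with $\|fg\|_{L^2}\le\|f\|_{L^\infty}\|g\|_{L^2}$ and the Sobolev embedding $H^\sigma(\R^n)\hookrightarrow L^\infty(\R^n)$. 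Finally $\widehat{fg}$ is a constant multiple of $\widehat f*\widehat g$, so $\|\widehat{fg}\|_{L^1}\le\|\widehat f\|_{L^1}\|\widehat g\|_{L^1}$ is merely Young's inequality.

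The substantive step is the last inequality, $\|\widehat f\|_{L^r(\R^2)}\lesssim\|\langle\na\rangle^{\frac2r-\frac12+\epsilon}f\|_{L^1_x(L^2_y)}$. I would put $s=\frac2r-\frac12+\epsilon$ and $g=\langle\na\rangle^s f$, so that $\widehat f(\xi,\eta)=(1+|\xe|^2)^{-s/2}\widehat g(\xi,\eta)$ and the right-hand side is $\|g\|_{L^1_x(L^2_y)}$. For fixed $\xi$, H\"{o}lder in $\eta$ with exponents $\frac2r$ and $\frac2{2-r}$ gives
$$\int_{\R}(1+|\xe|^2)^{-rs/2}|\widehat g(\xi,\eta)|^r\,d\eta\le\Big(\int_{\R}|\widehat g(\xi,\eta)|^2\,d\eta\Big)^{r/2}\Big(\int_{\R}(1+|\xe|^2)^{-\frac{rs}{2-r}}\,d\eta\Big)^{\frac{2-r}{2}};$$
with the substitution $\eta=(1+\xi^2)^{1/2}u$ the last factor is $C(1+\xi^2)^{\frac{2-r}{4}-\frac{rs}{2}}$, meaningful because $s>\frac1r-\frac12$. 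Writing $\mathcal{F}_x$ for the Fourier transform in $x$ alone, Plancherel in the $y$–variable identifies the remaining $\eta$–integral with $\|\mathcal{F}_x g(\xi,\cdot)\|_{L^2_y}^2$; the pointwise bound $|\mathcal{F}_x g(\xi,y)|\le\|g(\cdot,y)\|_{L^1_x}$ and Minkowski's integral inequality then make it at most $\|g\|_{L^1_x(L^2_y)}^2$, uniformly in $\xi$. Integrating over $\xi$ leaves the factor $\int_{\R}(1+\xi^2)^{\frac{2-r}{4}-\frac{rs}{2}}\,d\xi$, which is finite precisely when $s>\frac2r-\frac12$, exactly the condition built into the choice of $s$. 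I expect this exponent bookkeeping to be the only point requiring care: one must verify that the single inequality $s>\frac2r-\frac12$ forces convergence of both the $\eta$– and the $\xi$–integral (the endpoint $r=2$, where $\frac2{2-r}=\infty$ and the inner H\"{o}lder step reduces to an $L^\infty_\eta$ bound, being covered as a degenerate case).
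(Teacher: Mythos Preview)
Your proof is correct and complete. The paper itself omits the proof entirely, stating only ``Since the proof is easy, we omit it,'' so there is nothing to compare against beyond noting that your arguments are the standard ones the authors presumably had in mind; in particular, your treatment of the anisotropic estimate $\|\widehat f\|_{L^r(\R^2)}\lesssim\|\langle\na\rangle^{\frac{2}{r}-\frac{1}{2}+\epsilon}f\|_{L^1_x(L^2_y)}$ via H\"older in $\eta$, Plancherel in $y$, and the Minkowski bound $\|\mathcal{F}_x g(\xi,\cdot)\|_{L^2_y}\le\|g\|_{L^1_x(L^2_y)}$ is clean, and the exponent bookkeeping you flagged is exactly right (the $\xi$--integrability condition $s>\frac{2}{r}-\frac{1}{2}$ is the binding one and subsumes the $\eta$--condition $s>\frac{1}{r}-\frac{1}{2}$).
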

\section{The integral representation of solutions}
\label{spec}
\vskip.1in
In this section, we shall obtain the integral representation of solutions to (\ref{mhd}).
Let us investigate the spectrum properties to the following two systems:
\begin{equation} \label{ub1}
\left\{
\begin{array}{l}
\partial_t u -\Delta u   =\p_x b+F^1,  \\
\partial_t b   =\p_x u+G^1,\\
G^1=-\U\cd\na b+\B\cd\na u,\\
 F^1=-\U\cd\na u-\p_xp+\B\cd\na b
\end{array}
\right.
\end{equation}
and
\begin{equation} \label{vB}
\left\{
\begin{array}{l}
\partial_t v -\Delta v   =\p_x B+F^2,  \\
\partial_t B   =\p_x v+G^2,\\
G^2=-\U\cd\na B+\B\cd\na v,\\
 F^2=-\U\cd\na v-\p_yp+\B\cd\na B,
\end{array}
\right.
\end{equation}
where
$$p=-\De^{-1}{\rm div}(\U\cd\na \U-\B\cd\na\B).$$
Denote $\vec{\xi}=(\xi,\eta)$ and
$$A=
\left(
  \begin{array}{cc}
    -|\xe|^2 & -i\xi \\
    -i\xi & 0 \\
  \end{array}
\right),
$$
then  the eigenvalues of the matrix $A$ can be given by
 \begin{equation*}
\lambda_\pm=\left\{
\begin{array}{l}
 \frac{-|\xe|^2\pm \sqrt{|\xe|^4-4\xi^2}}{2},\ \ {\rm when} \ \    |\xi|<\frac{|\xe|^2}{2},\\
\frac{-|\xe|^2\pm i\sqrt{4\xi^2-|\xe|^4}}{2},\ \ {\rm when} \ \  |\xi|\ge\frac{|\xe|^2}{2},
\end{array}
\right.
\end{equation*}
where $i=\sqrt{-1}$. After using Fourier transform, (\ref{ub1}) and
(\ref{vB}) reduces to
\begin{equation}\label{Fub}
\p_t\left(
        \begin{array}{c}
          \widehat{u} \\
          \widehat{b} \\
        \end{array}
      \right)(\xe)
=A \left(
        \begin{array}{c}
          \widehat{u} \\
          \widehat{b} \\
        \end{array}
      \right)(\xe)
      +\left(
        \begin{array}{c}
          \widehat{F^1} \\
          \widehat{G^1} \\
        \end{array}
      \right)(\xe)
\end{equation}
and
\begin{equation}\label{FvB}
\p_t\left(
        \begin{array}{c}
          \widehat{v} \\
          \widehat{B} \\
        \end{array}
      \right)(\xe)
=A \left(
        \begin{array}{c}
          \widehat{v} \\
          \widehat{B} \\
        \end{array}
      \right)(\xe)
      +\left(
        \begin{array}{c}
          \widehat{F^2} \\
          \widehat{G^2} \\
        \end{array}
      \right)(\xe).
\end{equation}
\vskip.1in
It follows by using  the  method of diagonalization via the eigenvalues and eigenvectors,   (\ref{Fub}) and (\ref{FvB}) that
\begin{equation}\label{B}
\B(t)=M_{1}(\p,t)\U_0+M_{2}(\p,t)\B_0
+\int_0^t M_{1}(\p,t-\tau)\vec{F}d\tau
+\int_0^t M_{2}(\p,t-\tau)\vec{G}d\tau
\end{equation}
and
\begin{equation}\label{U}
\U(t)=M_{3}(\p,t)\U_0+M_{1}(\p,t)\B_0
+\int_0^t M_{3}(\p,t-\tau)\vec{F}d\tau
+\int_0^t M_{1}(\p,t-\tau)\vec{G}d\tau,
\end{equation}
 where
$$\vec{F}=(F^1,F^2),\ \vec{G}=(G^1,G^2),\ \widehat{M_{i}f}(\xe,t)=
\widehat{M_{i}}(\xe,t)\widehat{f}(\xe),\ i=1,2,3$$
and
\begin{equation}\label{MM}
\big(
  \begin{array}{ccc}
     \widehat{M_{1}}(\xe,t),& \widehat{M_{2}}(\xe,t), & \widehat{M_{3}}(\xe,t)\\
  \end{array}
  \big)
\stackrel{\rm def}{=}
  \big(
    \begin{array}{ccc}
i\xi\frac{e^{\lambda_-t}-e^{\lambda_+t}}{\lambda_+-\lambda_-}, &  \frac{\lambda_+ e^{\lambda_-t}-\lambda_- e^{\lambda_+t}}
{\lambda_+-\lambda_-}, & \frac{\lambda_- e^{\lambda_-t}-\lambda_+e^{\lambda_+t}}
      {\lambda_--\lambda_+}\\
    \end{array}
  \big).
\end{equation}
Notice that
\begin{equation}\label{tm}
\p_tM_{2}=\p_xM_{1},\ \p_tM_{1}=\p_xM_{3},
\end{equation}
which is useful in the following context.
%where
%$$
%\widehat{\p_tM_{2}}(\xe,t)=-i\xi \widehat{M_{1}}(\xe,t),
%\ \widehat{\p_tM_{1}}(\xe,t)=-i\xi \widehat{M_{3}}(\xe,t).$$
To bound $M_{i}(\p,t)$, we split
the whole space $\R^2$ into   four   regions:
\begin{equation}\label{domain}
  \begin{aligned}
   D_1\stackrel{\rm def}{=}&\{\xe\in \R^2:\ |\xi|\ge |\xe|^2\},\\
D_2\stackrel{\rm def}{=}&\{\xe\in \R^2:\ \frac{|\xe|^2}{2}\le|\xi|<|\xe|^2\},\\
D_3\stackrel{\rm def}{=}&\{\xe\in \R^2:\ \frac{|\xe|^2}{4}\le |\xi|<\frac{|\xe|^2}{2}\},\\
D_4\stackrel{\rm def}{=}&\{\xe\in \R^2:\ |\xi|< \frac{|\xe|^2}{4}\}.
  \end{aligned}
\end{equation}
In order to help us establish   some estimates of solutions,
 $D_4$ will be seen as two sets in many times, namely,
\begin{equation}\label{D4f}
D_4=D_{41}\cup D_{42},\ D_{41}=D_4\cap \{\xe:|\xe|\ge 1\},\ D_{42}=D_4\cap \{\xe:|\xe|< 1\}.
\end{equation}
Due to the definition  in (\ref{domain}), it is easy to get
\begin{equation}\label{proper1}
|\xe|\lesssim 1\ {\rm  when}\   \xe\in D_1\cup D_2\cup D_3,
\end{equation}
so that we can   bound some estimates on $D_i$ $(i=1,2,3)$ under  low regularity assumption.
\vskip .1in
Next, a proposition devoting  to the estimates of $\widehat{M_{i}}(\xe,t)$  is given.
\begin{prop}\label{p3.1}
 $\widehat{M_{i}}(\xe,t)$ $(i=1,2,3)$ defined by (\ref{MM}) satisfies the following estimates:\\
(1) if $\xe\in D_1$,
\begin{equation}\label{rf1}
\begin{aligned}
|\widehat{M_{1}}(\xe,t)|\lesssim\ e^{-\frac{|\xe|^2}{2}t}&,\
|\widehat{M_{2}}(\xe,t)|\lesssim\ e^{-\frac{|\xe|^2}{2}t},\
|\widehat{M_{3}}(\xe,t)|\lesssim\ e^{-\frac{|\xe|^2}{2}t};
\end{aligned}
\end{equation}
(2) if $\xe\in D_2$,
\begin{equation*}
\begin{aligned}
|\widehat{M_{1}}(\xe,t)|\lesssim\ e^{-\frac{|\xe|^2}{4}t}&,\
|\widehat{M_{2}}(\xe,t)|\lesssim\ e^{-\frac{|\xe|^2}{4}t},\
|\widehat{M_{3}}(\xe,t)|\lesssim\ e^{-\frac{|\xe|^2}{4}t};
\end{aligned}
\end{equation*}
(3) if $\xe\in D_3$,
\begin{equation*}
\begin{aligned}
|\widehat{M_{1}}(\xe,t)|\lesssim\ e^{-\frac{|\xe|^2}{32}t}&,\
|\widehat{M_{2}}(\xe,t)|\lesssim\ e^{-\frac{|\xe|^2}{32}t},\
|\widehat{M_{3}}(\xe,t)|\lesssim\ e^{-\frac{|\xe|^2}{32}t};
\end{aligned}
\end{equation*}
(4) if $\xe\in D_4$,
\begin{equation}\label{rf2}
\begin{aligned}
|\widehat{M_{1}}(\xe,t)|\lesssim\ \frac{|\xi|}{|\xe|^2}e^{-\frac{\xi^2}{|\xe|^2}t},&\
|\widehat{M_{2}}(\xe,t)|\lesssim\ e^{-\frac{\xi^2}{|\xe|^2}t},\
|\widehat{M_{3}}(\xe,t)|\lesssim\ e^{-\frac{|\xe|^2}{2}t}+\frac{\xi^2}{|\xe|^4}e^{-\frac{\xi^2}
{|\xe|^2}t}.
\end{aligned}
\end{equation}
\end{prop}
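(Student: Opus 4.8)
The plan is to diagonalise the matrix $A$ by hand and then reduce everything to one–variable estimates. Since $\operatorname{tr}A=-|\xe|^2$ and $\det A=\xi^2$, the eigenvalues satisfy $\lambda_++\lambda_-=-|\xe|^2$ and $\lambda_+\lambda_-=\xi^2$; putting $\mu\stackrel{\rm def}{=}\frac12\sqrt{|\xe|^4-4\xi^2}$ — so $\mu$ is real when $|\xi|\le\frac{|\xe|^2}{2}$ and purely imaginary when $|\xi|\ge\frac{|\xe|^2}{2}$ — we have $\lambda_\pm=-\frac{|\xe|^2}{2}\pm\mu$. Substituting this into (\ref{MM}) and using $e^{\lambda_-t}-e^{\lambda_+t}=-2e^{-|\xe|^2t/2}\sinh(\mu t)$ together with $e^{\lambda_-t}+e^{\lambda_+t}=2e^{-|\xe|^2t/2}\cosh(\mu t)$ produces the singularity–free representations
\[
\widehat{M_1}=-i\xi\,e^{-\frac{|\xe|^2}{2}t}\frac{\sinh(\mu t)}{\mu},\qquad
\widehat{M_2},\ \widehat{M_3}=e^{-\frac{|\xe|^2}{2}t}\Big(\cosh(\mu t)\pm\frac{|\xe|^2}{2}\frac{\sinh(\mu t)}{\mu}\Big)\ \text{respectively},
\]
where $\cosh$ and $z\mapsto\sinh(z)/z$ are entire in $z^2$, so these formulas remain valid on the interfaces where $\lambda_+=\lambda_-$. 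The whole proof then uses only $|e^{\lambda_\pm t}|=e^{\mathrm{Re}(\lambda_\pm)t}$, the elementary bounds $|\cosh(\mu t)|\le e^{t|\mathrm{Re}\,\mu|}$ and $|\sinh(\mu t)/\mu|\le t\,e^{t|\mathrm{Re}\,\mu|}$ (which hold since $\mu$ is real or purely imaginary), and the inequality $s^k e^{-s}\lesssim_k e^{-s/2}$ for $s\ge0$.

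On $D_1$ one has $|\xi|\ge|\xe|^2$, hence $4\xi^2-|\xe|^4\ge3\xi^2$, $\mathrm{Re}\,\mu=0$, $|\lambda_\pm|=\sqrt{\lambda_\pm\overline{\lambda_\pm}}=|\xi|$, $|\lambda_+-\lambda_-|=2|\mu|\ge\sqrt3\,|\xi|$, and $|e^{\lambda_\pm t}|=e^{-|\xe|^2t/2}$; feeding these directly into (\ref{MM}) gives $|\widehat{M_i}|\lesssim e^{-|\xe|^2t/2}$, $i=1,2,3$. On $D_2$ ($\frac{|\xe|^2}{2}\le|\xi|<|\xe|^2$) we are still in the oscillatory regime, so $|\cosh(\mu t)|\le1$ and $|\sinh(\mu t)/\mu|\le t$, and since $|\xi|<|\xe|^2$ the $\sinh$–terms in the displayed representations are controlled by $|\xe|^2t\,e^{-|\xe|^2t/2}\lesssim e^{-|\xe|^2t/4}$, which is the asserted rate. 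On $D_3$ ($\frac{|\xe|^2}{4}\le|\xi|<\frac{|\xe|^2}{2}$) the number $\mu$ is real with $0<\mu\le\frac{\sqrt3}{4}|\xe|^2$ (from $4\xi^2\ge\frac{|\xe|^4}{4}$), so $\lambda_+=-\frac{|\xe|^2}{2}+\mu\le-\frac{|\xe|^2}{16}$; combining $|\cosh(\mu t)|\le e^{\mu t}$ and $|\sinh(\mu t)/\mu|\le t e^{\mu t}$ with the prefactor $e^{-|\xe|^2t/2}$ turns every term into $e^{\lambda_+t}$ or $|\xe|^2t\,e^{\lambda_+t}$, and one use of $s\,e^{-s}\lesssim e^{-s/2}$ yields the rate $e^{-|\xe|^2t/32}$.

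The region that needs genuine care is $D_4$ ($|\xi|<\frac{|\xe|^2}{4}$), where the anisotropic rate $e^{-\xi^2t/|\xe|^2}$ must be produced and the crude estimate $e^{\mathrm{Re}(\lambda_+)t}\le e^{-|\xe|^2t/2}$ is far too lossy on $\lambda_+$. The way around this is to use the product relation $\lambda_+\lambda_-=\xi^2$: from $4\xi^2<\frac{|\xe|^4}{4}$ one gets $\mu>\frac{\sqrt3}{4}|\xe|^2$, hence $\frac{|\xe|^2}{2}\le|\lambda_-|=\frac{|\xe|^2}{2}+\mu\le|\xe|^2$, and therefore $\frac{\xi^2}{|\xe|^2}\le|\lambda_+|=\frac{\xi^2}{|\lambda_-|}\le\frac{2\xi^2}{|\xe|^2}$, $\lambda_+<0$, $\lambda_+-\lambda_-=2\mu\gtrsim|\xe|^2$. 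Since moreover $\frac{\xi^2}{|\xe|^2}<\frac{|\xe|^2}{16}<\frac{|\xe|^2}{2}$, we have $e^{\lambda_- t}\le e^{-|\xe|^2t/2}\le e^{-\xi^2t/|\xe|^2}$ and $e^{\lambda_+ t}\le e^{-\xi^2t/|\xe|^2}$; plugging these, together with $|\lambda_+|\lesssim\xi^2/|\xe|^2$, $|\lambda_-|\lesssim|\xe|^2$ and the lower bound on $|\lambda_+-\lambda_-|$, into (\ref{MM}) yields precisely (\ref{rf2}) — the factor $\xi^2/|\xe|^4$ in $\widehat{M_3}$ coming from $|\lambda_+|/(2\mu)$ and its leading $e^{-|\xe|^2t/2}$ term from $\big(|\lambda_-|/(2\mu)\big)e^{\lambda_- t}$. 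The only delicate points throughout are thus (i) the coalescence $\lambda_+=\lambda_-$ on the interfaces inside $D_2$ and $D_3$, removed by the $\sinh/\cosh$ rewriting above, and (ii) extracting the sharp anisotropic exponent on $D_4$ from $\lambda_+\lambda_-=\xi^2$ rather than from $\mathrm{Re}\,\lambda_+$.
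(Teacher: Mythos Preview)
Your proof is correct and follows essentially the same approach as the paper: in each region you pin down the sizes of $\lambda_\pm$, $\lambda_+-\lambda_-$, and $e^{\lambda_\pm t}$ and read off the bounds from the definitions (\ref{MM}); on $D_4$ both you and the paper extract the anisotropic rate from $\lambda_+=\xi^2/\lambda_-$. The one cosmetic difference is that you package $D_2$ and $D_3$ via the entire functions $\cosh(\mu t)$ and $\sinh(\mu t)/\mu$, which removes the apparent singularity at $\lambda_+=\lambda_-$ in one stroke, whereas the paper handles this separately with $|\sin x|\le|x|$ on $D_2$ and the mean-value bound $e^{\lambda_+t}-e^{\lambda_-t}\le(\lambda_+-\lambda_-)te^{\lambda_+t}$ on $D_3$; these are exactly the same inequalities in different notation.
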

\begin{proof}
 (1)\ Since $\xe\in D_1$, we have
 $$|\lambda_+-\lambda_-|\thickapprox |\lambda_\pm|\thickapprox |\xi|,\
 |e^{\lambda_\pm t}|\lesssim\ e^{-\frac{|\xe|^2}{2}t},$$
  which yields the desired estimate (\ref{rf1}) by some basic computations.
  \vskip .1in
  (2) In $D_2$, we have $|\lambda_\pm|=|\xi|$ and $|\xi|\thickapprox|\xe|^2$. For $\widehat{M}_{1}(\xe,t)$, using $|\sin x|\le |x|$, we have
  \begin{equation*}
  \begin{aligned}
  |\widehat{M_{1}}(\xe,t)|
  \lesssim\  |\xi|e^{-\frac{|\xe|^2}{2}t}
  \Big|\frac{\sin(\frac{\sqrt{4\xi^2-|\xe|^4}}{2}t)}
  {\sqrt{4\xi^2-|\xe|^4}}\Big|
  \lesssim\ |\xi|te^{-\frac{|\xe|^2}{2}t}
  \lesssim\ \frac{|\xi|}{|\xe|^2}e^{-\frac{|\xe|^2}{4}t}
  \lesssim\ e^{-\frac{|\xe|^2}{4}t},
  \end{aligned}
  \end{equation*}
  which,
  together with
  \begin{equation}\label{100}
  |\widehat{M_{2}}(\xe,t)|\le
  |e^{\lambda_-t}|+\frac{|\lambda_-|}{|\xi|}|\widehat{M_{1}}
  (\xe,t)|,\ |\widehat{M_{3}}(\xe,t)|\le
  |e^{\lambda_-t}|+\frac{|\lambda_+|}{|\xi|}|\widehat{M_{1}}
  (\xe,t)|,
  \end{equation}
  yields the desired result.
\vskip .1in
(3) In $D_3$, one has
$$|\xi|\thickapprox|\xe|^2,\ \lambda_-\in (-|\xe|^2,-\frac{|\xe|^2}{2}),\ \lambda_+\ge \lambda_-$$
and
$$e^{\lambda_+t}-e^{\lambda_-t}\le
e^{\lambda_+t}(\lambda_+-\lambda_-)t,\ \lambda_+
=-\frac{2\xi^2}{|\xe|^2+\sqrt{|\xe|^4-4\xi^2}}\in (-\frac{|\xe|^2}{2}, -\frac{|\xe|^2}{16}],$$
which follows
$$|\widehat{M_{1}}(\xe,t)|\lesssim |\xi||\frac{e^{\lambda_+t}-e^{\lambda_-t}}{\lambda_+-\lambda_-}|
\lesssim\ |\xi|e^{\lambda_+t}t
\lesssim\ \frac{|\xi|}{|\xe|^2}e^{-\frac{|\xe|^2}{16}t}(t|\xe|^2)
\lesssim\ e^{-\frac{|\xe|^2}{32}t}.
$$
Thanks to (\ref{100}), we can get  the desired  estimates.
  \vskip .1in
  (4) In $D_4$, we have
  $$
  \lambda_+-\lambda_-\in (\frac{|\xe|^2}{\sqrt{2}},|\xe|^2),
  \ \lambda_-\in (-|\xe|^2,-\frac{|\xe|^2}{2}),\
  \lambda_+\in (-\frac{2\xi^2}{|\xe|^2},-\frac{\xi^2}{|\xe|^2}),
  \ -\frac{\xi^2}{|\xe|^2}\ge -\frac{|\xe|^2}{2},
  $$
 yielding $e^{\lambda_\pm t}\le e^{-\frac{\xi^2}{|\xe|^2}t}$. So we have
  $$
  \begin{aligned}
  |\widehat{M_{1}}(\xe,t)|\lesssim&\ \frac{|\xi|}{|\xe|^2}(e^{\lambda_+t}+e^{\lambda_-t})
  \lesssim\  \frac{|\xi|}{|\xe|^2}e^{-\frac{\xi^2}{|\xe|^2}t},\\
  |\widehat{M_{2}}(\xe,t)|\lesssim&\ e^{\lambda_-t}
  +|\lambda_-|\frac{|e^{\lambda_+t}-e^{\lambda_-t|}}
  {|\lambda_+-\lambda_-|}
  \lesssim\ e^{-\frac{\xi^2}{|\xe|^2}t},\\
  |\widehat{M_{3}}(\xe,t)|\lesssim&\ e^{\lambda_-t}
  +|\lambda_+|\frac{|e^{\lambda_+t}-e^{\lambda_-t|}}
  {|\lambda_+-\lambda_-|}
  \lesssim\ e^{-\frac{|\xe|^2}{2}t}+\frac{\xi^2}{|\xe|^4}
  e^{-\frac{\xi^2}{|\xe|^2}t}.
  \end{aligned}
  $$
\end{proof}
\begin{rem}\label{rpp}
Since the estimates of $\widehat{M_{i}}(\xe,t)$ $(i=1,2,3)$ in $D_1$, $D_2$ and $D_3$  are  similar, we will give the detailed estimate on $D_1$ and omit the details of the estimate on $D_2\cup D_3$.
\end{rem}
\section{Decay estimates and Nonlinear estimates}
\label{sdecay1}
\vskip.1in
\subsection{Decay estimate I}
\begin{lemma}\label{l4.1}
Let $k\ge0$, $c>0$ and $0<\alpha\le1$, there holds
\begin{equation} \label{decay1}
\begin{aligned}
1)\ \||\na|^k e^{ct\De}f\|_{\F L^2(D_1)}
\lesssim&\ \min\big\{\langle t\rangle^{-\frac{k+1}{2}}
\|f\|_{L^1}, \langle t\rangle^{-\frac{2k+1}{4}}\|f\|_{L^1_y(L^2_x)},
\langle t\rangle^{-\frac{k}{2}} \|f\|_{L^2}\big\}; \\
2)\ \||\na|^k e^{ct\De}f\|_{\F L^2(D_4)}
\lesssim&\  \min\big\{\langle t\rangle^{-\frac{k}{2}} \|f\|_{H^k}, \langle t\rangle^{-\frac{k+1}{2}}(\|f\|_{L^1}+\||\na|^k f\|_{L^2})\}; \\
3)\ \||\na|^k e^{ct\De}f\|_{\F L^1(D_1)}
\lesssim&\ \min\big\{\langle t\rangle^{-\frac{k+2}{2}}
\|f\|_{L^1}, \langle t\rangle^{-\frac{k+2}{2}}
\||\na|^{-1}f\|_{L^2},\\
&\ \langle t\rangle^{-\frac{2k+3}{4}}
\|f\|_{L^1_y(L^2_x)},  \langle t\rangle^{-\frac{k+1}{2}}
\|f\|_{L^2},\langle t\rangle^{-\frac{k}{2}} \|f\|_{\F L^1}\big\}; \\
4)\ \||\na|^k e^{ct\De}f\|_{\F L^1(D_4)}
\lesssim&\ \langle t\rangle^{-\frac{k}{2}}\min\big\{\langle t\rangle^{-1}
(\|f\|_{L^1}+ \||\na|^kf\|_{\F L^1}),\\
&\langle t\rangle^{-\frac{1}{2}}(\|f\|_{L^2}+ \||\na|^kf\|_{\F L^1}), \|\widehat{f}\|_{L^1}+\||\na|^kf\|_{\F L^1}\big\};\\
5)\ \||\na|^{-\alpha} e^{ct\De}f\|_{\F L^1(D_1)}
\lesssim&\ \LHH^{-\frac{2-\alpha}{2}}\|f\|_{L^1},
\end{aligned}
\end{equation}
where $D_1$ and $D_4$ are defined by (\ref{domain}).
\end{lemma}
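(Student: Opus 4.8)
The strategy is to reduce everything to the one-dimensional heat kernel in the Fourier variable. Recall that $\widehat{e^{ct\De}f}(\xe)=e^{-c|\xe|^2 t}\widehat f(\xe)$, so that $\||\na|^k e^{ct\De}f\|_{\F L^p(D)}=\big\||\xe|^k e^{-c|\xe|^2t}\widehat f\big\|_{L^p_\xe(D)}$. On the region $D_1$ (where $|\xi|\ge|\xe|^2$, hence $|\xe|\lesssim 1$ by \eqref{proper1}) and on $D_4$ (where $|\xi|<|\xe|^2/4$) the arguments are different, so I would split the lemma into the $D_1$ part (items 1, 3, 5) and the $D_4$ part (items 2, 4) and treat them separately, handling each of the $\F L^2$ and $\F L^1$ cases in turn. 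In each case the proof is: (i) bound $|\widehat f(\xe)|$ by the appropriate norm of $f$ via Hausdorff--Young / Cauchy--Schwarz; (ii) pull that bound out; (iii) estimate the remaining multiplier norm $\big\||\xe|^k e^{-c|\xe|^2 t}\big\|_{L^p_\xe(D)}$ by an elementary Gaussian-moment computation, using the region geometry to get the extra powers of $t$.

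\textbf{The $D_1$ estimates (items 1, 3, 5).} Here $|\xe|\lesssim 1$, so the bound $\langle t\rangle^{-k/2}\|f\|_{L^2}$ in item 1 and $\langle t\rangle^{-k/2}\|f\|_{\F L^1}$ in item 3 are immediate from $\||\xe|^k e^{-c|\xe|^2 t}\|_{L^\infty(D_1)}\lesssim\langle t\rangle^{-k/2}$ together with $\|\widehat f\|_{L^2}\thickapprox\|f\|_{L^2}$ (resp.\ the definition of $\F L^1$). For the $\|f\|_{L^1}$-bounds, use $\|\widehat f\|_{L^\infty}\le\|f\|_{L^1}$ and then compute, using that $D_1\subset\{|\xe|\lesssim 1\}$ and $|\xi|\le|\xe|^2$ forces $|\eta|\gtrsim|\xi|^{1/2}$-type relations, the two-dimensional integral $\int_{D_1}|\xe|^{2k}e^{-2c|\xe|^2 t}\,d\xe$ (for $\F L^2$) and $\int_{D_1}|\xe|^{k}e^{-c|\xe|^2 t}\,d\xe$ (for $\F L^1$). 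The key geometric input is $|D_1\cap\{|\xe|\approx R\}|\approx R^3$ for $R\lesssim 1$ (since the $\xi$-slab has width $\approx R^2$ while $\eta$ ranges over length $\approx R$), which after changing variables and integrating in $R$ produces the claimed exponents $-\tfrac{2k+1}{4}$ (for the $L^1_y(L^2_x)$-mixed-norm, using Minkowski/Plancherel in $x$ first), $-\tfrac{k+1}{2}$ and $-\tfrac{k+2}{2}$. For the $\||\na|^{-1}f\|_{L^2}$ bound in item 3, write $|\xe|^k\widehat f=|\xe|^{k+1}\cdot|\xe|^{-1}\widehat f$ and apply Cauchy--Schwarz; for item 5, $|\na|^{-\alpha}$ only improves the power of $|\xe|$ inside the Gaussian moment, giving $\langle t\rangle^{-(2-\alpha)/2}$. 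For the mixed-norm bounds one uses $\|\widehat f(\cdot,\eta)\|_{L^2_\xi}\le\|f(\cdot,y)\|_{L^1_y(L^2_x)}$ by Plancherel-in-$x$ then Minkowski-in-$y$, then integrates the Gaussian in $\eta$ over the relevant $\eta$-range.

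\textbf{The $D_4$ estimates (items 2, 4).} Here $|\xe|$ is unbounded, so we cannot discard powers of $|\xe|$ for free and must pay with derivatives of $f$. Split $D_4=D_{41}\cup D_{42}$ as in \eqref{D4f}. On $D_{42}$ ($|\xe|<1$) proceed exactly as on $D_1$ — same Gaussian-moment computation with the geometry $|D_4\cap\{|\xe|\approx R\}|\approx R^3$ for $R<1$ — to produce the "$+1$" in the exponents. On $D_{41}$ ($|\xe|\ge1$) one simply uses $|\xe|^k e^{-c|\xe|^2 t}\lesssim\langle t\rangle^{-k/2}e^{-c|\xe|^2 t/2}$ and absorbs the surviving $|\xe|^k$ into the norm of $f$: for $\F L^2$ this gives $\langle t\rangle^{-k/2}\||\na|^k f\|_{L^2}\le\langle t\rangle^{-k/2}\|f\|_{H^k}$ after also keeping the no-decay piece, and the $\langle t\rangle^{-(k+1)/2}(\|f\|_{L^1}+\||\na|^kf\|_{L^2})$ bound comes from handling $D_{42}$ by the $L^1$-moment method and $D_{41}$ by putting the $|\xe|^k$ on $f$; for $\F L^1$ the analogous split gives the three bounds listed, the $\||\na|^k f\|_{\F L^1}$ term always coming from the unbounded-frequency piece $D_{41}$. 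I would organize this as: "on $D_{42}$, the estimates from item 1 (resp.\ 3) with the extra decay from the bounded measure apply verbatim; on $D_{41}$, bound $|\xe|^k e^{-c|\xe|^2t}\lesssim\langle t\rangle^{-k/2}$ and use Plancherel/Hausdorff--Young."

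\textbf{Main obstacle.} Nothing here is deep; the work is entirely in bookkeeping the many cases and, in particular, in extracting the \emph{sharp} powers of $t$ from the anisotropic region geometry — e.g.\ seeing that on $D_1$ the constraint $|\xi|\le|\xe|^2$ makes the region parabolically thin in $\xi$, so the effective dimension for the Gaussian decay is $3$ rather than $2$, which is what upgrades $\langle t\rangle^{-(k+1)/2}$ (the naive 2D rate for the $L^1$ datum) as compared to the isotropic heat estimate in Lemma \ref{l2.1}, and likewise produces the half-integer exponent $-\tfrac{2k+3}{4}$ for the mixed norm. The one genuinely delicate point is the mixed-norm $L^1_y(L^2_x)$ estimates: here the order of integration matters (Plancherel in $x$, then Minkowski to pull the $L^1_y$ outside, then the Gaussian moment in $\eta$ restricted to the region's $\eta$-range), and one must check that the $\xi$-integral of $|\xe|^{2k}e^{-2c|\xe|^2t}$ over the parabolic slab $|\xi|\lesssim|\eta|^2$ behaves like $\langle t\rangle^{-(2k+1)/2}$ uniformly in $\eta$ before integrating in $\eta$. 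I expect to dispatch all of this with one or two explicit one-variable Gaussian-moment lemmas ($\int_0^\infty r^a e^{-r^2 t}\,dr\lesssim\langle t\rangle^{-(a+1)/2}$) applied region by region.
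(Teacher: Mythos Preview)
Your overall strategy matches the paper's, but you have the $D_1$ geometry backwards in the detailed plan: $D_1=\{|\xi|\ge|\xe|^2\}$ (as you correctly wrote at the outset), not $|\xi|\le|\xe|^2$. The region that is ``parabolically thin in $\xi$'' with $\xi$-width $\approx R^2$ and dyadic-shell measure $\approx R^3$ is $D_{42}=D_4\cap\{|\xe|<1\}$, not $D_1$; at small scales $D_1$ occupies essentially the full annulus. Fortunately this confusion is harmless for items 1, 3, 5: those bounds need nothing beyond $D_1\subset\{|\xe|\lesssim 1\}$, and the exponents $-(k+1)/2$, $-(k+2)/2$, $-(2k+1)/4$, $-(2k+3)/4$ are exactly the isotropic 2D heat rates on a bounded ball --- there is no ``effective dimension $3$'' gain on $D_1$. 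Your $R^3$ intuition is correct and needed on $D_{42}$, where it is precisely what buys the extra $\LHH^{-1/2}$ in items 2 and 4.

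The paper's organization is cleaner than a case-by-case moment computation for each $k$. It first reduces to $k=0$ via $|\xe|^k e^{-c|\xe|^2 t}\lesssim\LHH^{-k/2}e^{-\frac{c}{2}|\xe|^2 t}$ on $D_1$, and on $D_4$ the same plus the short-time remainder $\||\na|^k f\|_{\F L^r}$. For $k=0$ every bound then follows from the heat estimates of Lemma~\ref{l2.1} together with H\"older and Plancherel, except two computed directly: (\ref{decay1})$_5$ in polar coordinates on $\{|\xe|\lesssim 1\}$, and the single mixed-norm estimate $\|e^{ct\De}f\|_{\F L^r(D_1)}\lesssim\LHH^{\frac{1}{4}-\frac{1}{r}}\|f\|_{L^1_y(L^2_x)}$ for $1\le r\le 2$, obtained by factoring $e^{-c|\xe|^2 t}=e^{-c\xi^2 t}e^{-c\eta^2 t}$ and applying H\"older against $\|\widehat f\|_{L^2_\xi(L^\infty_\eta)}\le\|f\|_{L^1_y(L^2_x)}$. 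This one computation covers both the $\F L^1$ and $\F L^2$ mixed-norm bounds and sidesteps your ``$\xi$-integral over the parabolic slab uniformly in $\eta$'' step, which in any case was aimed at the wrong region.
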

\begin{proof}
It suffices to consider the case $k=0$, since we can easily get
$$\| |\na|^ke^{ct\De}f\|_{\F L^r(D_1)}
\lesssim\ \LHH^\frac{k}{2}\|e^{\frac{c}{2}t\De}f\|_{\F L^r(D_1)},$$
 $$\| |\na|^ke^{ct\De}f\|_{\F L^r(D_4)}
\lesssim\ \LHH^\frac{k}{2}(\|e^{\frac{c}{2}t\De}f\|_{\F L^r(D_4)}+\||\na|^kf\|_{\F L^r}).$$
If we can show (\ref{decay1})$_5$ and
\begin{equation}\label{q1}
\| e^{ct\De}f\|_{\F L^r(D_1)}
\lesssim\ \LHH^{\frac{1}{4}-\frac{1}{r}}\|f\|_{L^1_y(L^2_x)},\ 1\le r\le2,
\end{equation}
then other estimates can be proved  by using (\ref{Heat}),  H\"{o}lder's inequality, Plancherel's identity and (\ref{q1}). Next, we focus on the estimates of (\ref{decay1})$_5$ and (\ref{q1}).
Using polar coordinate
$$\xi=r\cos\theta,\ \eta=r\sin\theta,\ \theta\in [0,2\pi],$$
we have
$$
\begin{aligned}
\| |\na|^{-\alpha}e^{ct\De}f\|_{\F L^1(D_1)}
\lesssim&\  \int_{\theta\in [0,2\pi]}d\theta\int_{0\le r\lesssim1} e^{-ctr^2}r^{1-\alpha}|\widehat{f}(r,\theta)|dr\\
\lesssim&\ \min\{1,t^{-\frac{1-\alpha}{2}}\int_{0\le r\lesssim1} e^{-\frac{c}{2}tr^2}dr\}\|\widehat{f}\|_{L^\infty}\\
\lesssim&\ \LHH^{-\frac{2-\alpha}{2}}\|f\|_{L^1}.
\end{aligned}
$$
By $|\xe|^2=\xi^2+\eta^2$ and H\"{o}lder's inequality,
$$
\begin{aligned}
\| e^{ct\De}f\|_{\F L^r(D_1)}
\lesssim&\ \|e^{-ct\eta^2}e^{-ct\xi^2}\widehat{f}\|_{L^r(D_1)}\\
\lesssim&\ \|e^{-ct\eta^2}\|_{L^r_\eta(|\eta|\lesssim 1)}\|e^{-ct\xi^2}\|_{L^\frac{2r}{2-r}_\xi(|\xi|\lesssim 1)}\|\widehat{f}\|_{L^2_\xi(L^\infty_\eta)}\\
\lesssim&\ \LHH^{\frac{1}{4}-\frac{1}{r}}\|f\|_{L^1_y(L^2_x)}.
\end{aligned}
$$
Thanks to the above estimates, we conclude the estimates of (\ref{decay1})$_5$ and (\ref{q1}).
\end{proof}
\subsection{Decay  estimate II}
\begin{lemma}\label{l4.2}
Let $1\le r\le2$, $1\le q\le 2$ and $1/p+1/q>1$. For all $k\ge 0$ and $\delta>0$, there holds
\begin{equation}\label{D21}
\begin{aligned}
 1)\ \mathcal{I}_1=&
 \|\G_{k,k}
 e^{-\G_{2,2}t}\widehat{f}\|_{L^r
  (D_4)}
  \lesssim\  \LHH^{-\frac{k}{2}}\min\big\{
  \|\widehat{f}\|_{L^r},\langle t\rangle^{-\frac{1}{2r}}\|\langle \na\rangle^{\frac{2}{r}-\frac{1}{2}
  +\delta}f\|_{L^1_x(L^2_y)}\big\};\\
2)\ \mathcal{I}_2=&
\|\G_{k,k+1}
 e^{-\G_{2,2}t}\widehat{f}\|_{L^1
  (D_{4})}\lesssim\ \langle t\rangle^{-\frac{k+1}{2}}
  (\|\langle \na\rangle^{\frac{3}{2}+\delta} f\|_{L^1_x(L^2_y)}+\|f\|_{L^1});\\
 3)\ \mathcal{I}_2'=&
\|\G_{k+1,k+2}
 e^{-\G_{2,2}t}\widehat{f}\|_{L^1
  (D_{4})}\lesssim\ \langle t\rangle^{-\frac{k+1}{2}}
  (\|\widehat{f}\|_{L^1}+\|f\|_{L^2});\\
 4)\ \mathcal{I}_3=&
\|\G_{k+1,k+2}
e^{-\G_{2,2}t}\widehat{f}\|_{L^1
  (D_{42})}\lesssim\ \langle t\rangle^{-\frac{k+1}{2}}\min\{
\|\widehat{f}\|_{L^1}
  +\|f\|_{L^2},\ \langle t\rangle^{-\frac{1}{2}}\|f\|_{L^1}\};\\
 5)\  \mathcal{I}_4=&
 \|\G_{k+2,k+4}
 e^{-\G_{2,2}t}\widehat{f}\|_{L^1
  (D_4)}\lesssim\ \langle t\rangle^{-\frac{k+2}{2}}(\|\widehat{f}\|_{L^1}+
  \|f\|_{L^1_y(L^\frac{4}{3}_x)});\\
6)\ \mathcal{I}_5=&
\|\G_{k+1,k+3}
e^{-\G_{2,2}t}\widehat{f}\|_{L^1
  (D_{42})}\\
  &\lesssim\
  \min\{ t^{-\frac{k+1}{2}+\frac{1}{2p}-\frac{1}{2q}}\|f
 \|_{L^1_y(L^q_x)},
 \|f\|_{L^1},\LHH^{-\frac{k+2}{2}}\||\na|^{-\delta}f\|_{L^1}
 \};\\
 7)\ \mathcal{I}_6=&
 \|\G_{k+1,k+2}
 e^{-\G_{2,2}t}\widehat{f}\|_{L^2
  (D_4)}\lesssim\ \langle t\rangle^{-\frac{k+1}{2}}
(\|f\|_{L^1}+\|f\|_{L^2}).
  \end{aligned}
\end{equation}
where $\mathcal{G}_{k,l}=\frac{|\xi|^k}{|\xe|^l}$, $D_4$ and $D_{42}$ are defined by (\ref{domain}) and (\ref{D4f}).
\end{lemma}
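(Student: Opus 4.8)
The plan is to establish all seven estimates in \eqref{D21} by one uniform scheme, reducing each symbol to a ``pure weight times a Gaussian'' at the cost of powers of $\langle t\rangle^{-1/2}$.

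\textbf{Step 1 (reduction).} Writing $\mathcal{G}_{a,b}=|\xi|^a/|\xe|^b=(\mathcal{G}_{2,2})^{a/2}|\xe|^{a-b}$ and using that on $D_4$ one has $0\le\mathcal{G}_{2,2}=\xi^2/|\xe|^2\le\frac1{16}$ (because $|\xi|<|\xe|^2/4$ there), the elementary inequality $s^{a/2}e^{-st/2}\lesssim\min\{1,t^{-a/2}\}\approx\langle t\rangle^{-a/2}$, valid for $s\in[0,\frac1{16}]$, yields the pointwise bound
\[
\mathcal{G}_{a,b}\,e^{-\mathcal{G}_{2,2}t}\ \lesssim\ \langle t\rangle^{-\frac a2}\,|\xe|^{-(b-a)}\,e^{-\frac12\mathcal{G}_{2,2}t}\qquad\text{on }D_4,
\]
and in every item $b-a\in\{0,1,2\}$. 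So it remains to bound $\||\xe|^{-m}e^{-c\mathcal{G}_{2,2}t}\widehat f\|_{L^r(D)}$ for $m\in\{0,1,2\}$, $r\in\{1,2\}$, $D\in\{D_4,D_{42}\}$, and the claimed rates will be read off from the exponent $a$ together with an extra gain produced by the weight $|\xe|^{-m}$ on the low-frequency patch.

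\textbf{Step 2 (the two patches).} I split $D_4=D_{41}\cup D_{42}$. On $D_{41}$ the weight is harmless since $|\xe|^{-m}\le1$, and $\|e^{-c\mathcal{G}_{2,2}t}\widehat f\|_{L^r(D_{41})}$ is estimated either trivially by $\|\widehat f\|_{L^r}$ (this produces the members of the $\min$'s containing $\|\widehat f\|_{L^1}$ or $\|\widehat f\|_{L^r}$) or, when decay is wanted, exactly as in Lemma \ref{l4.1}, part 2): H\"older in $\xi$, the one-dimensional bound $\|e^{-c\xi^2 t/|\xe|^2}\|_{L^p_\xi}\lesssim t^{-1/(2p)}$ (legitimate because $\xi^2/|\xe|^2\le|\xi|/4$ on $D_4$), and the mixed-norm embedding $\|\widehat f\|_{L^r}\lesssim\|\langle\na\rangle^{\frac2r-\frac12+\delta}f\|_{L^1_x(L^2_y)}$ from Lemma \ref{l2.1}. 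On $D_{42}$ one has $|\xe|\approx|\eta|$ and, crucially, $|\xi|<|\xe|^2/4\lesssim|\eta|^2\le|\eta|$; after the substitution $\xi=|\eta|\mu$ (Jacobian $|\eta|$) the symbol becomes $|\mu|^a|\eta|^{a-b}e^{-c\mu^2 t}$ on $\{|\eta|<1,\ |\mu|\lesssim|\eta|\}$, a genuine Gaussian in $\mu$. Estimating the $\mu$-integral by $\int_{|\mu|\lesssim|\eta|}|\mu|^a e^{-c\mu^2 t}\,d\mu\lesssim\min\{|\eta|^{a+1},t^{-(a+1)/2}\}$ (and its $L^2_\mu$, $L^p_\mu$ variants) leaves the $\eta$-integral $\int_{|\eta|<1}|\eta|^{a-b+1}\min\{|\eta|^{a+1},t^{-(a+1)/2}\}\,d\eta$, which converges precisely because $b-a\le2$ and, splitting at $|\eta|=t^{-1/2}$, produces the extra half-powers of $\langle t\rangle^{-1}$ that upgrade $\langle t\rangle^{-a/2}$ to the stated rates. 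The various right-hand sides --- $\|f\|_{L^1}$, $\|f\|_{L^2}$, $\|f\|_{L^1_y(L^{4/3}_x)}$, $\|f\|_{L^1_y(L^q_x)}$, $\||\na|^{-\delta}f\|_{L^1}$ --- arise by applying, on the frozen $\eta$-slice, one of $\|\widehat f\|_{L^\infty}\le\|f\|_{L^1}$, Plancherel, Hausdorff--Young in $x$ (this is where $\mathcal{I}_5$ forces the Young condition $\frac1p+\frac1q>1$), or the pointwise bound $|\widehat f(\xe)|\le|\xe|^\delta\||\na|^{-\delta}f\|_{L^1}$ (whose extra $|\eta|^\delta$ kills the borderline logarithm and sharpens $\mathcal{I}_5$ to $\langle t\rangle^{-(k+2)/2}$); each $\min$ is simply the comparison of the trivial bound $e^{-c\mathcal{G}_{2,2}t}\le1$ with the full-decay bound.

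\textbf{Main obstacle.} The real difficulty is that $e^{-(\xi^2/|\xe|^2)t}$ is \emph{not} a heat kernel: it provides no decay transverse to $\{\xi=0\}$, so Lemma \ref{l2.1} does not apply, and the singular weights $|\xe|^{-(b-a)}$ --- which blow up exactly at the origin, where the Gaussian is useless --- must nonetheless be absorbed. The decomposition $D_4=D_{41}\cup D_{42}$, the constraint $|\xi|\lesssim|\eta|$ on $D_{42}$ (which makes $\int_{|\mu|\lesssim|\eta|}d\mu\lesssim|\eta|$ and thus tames the weight), and the anisotropic mixed-norm right-hand sides are precisely the mechanisms that make this work. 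Among the seven items I expect $\mathcal{I}_5$ to be the most delicate, since it combines the most singular symbol $|\xi|/|\xe|^3$ on the low-frequency patch with the Young-type exponent condition and with the negative-order norm $\||\na|^{-\delta}f\|_{L^1}$ needed to reach the sharp rate.
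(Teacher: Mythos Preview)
Your scheme---extract $\langle t\rangle^{-a/2}$ from $(\G_{2,2})^{a/2}$ pointwise, split $D_4=D_{41}\cup D_{42}$, and on $D_{42}$ exploit that $|\xe|\approx|\eta|$---is exactly the architecture of the paper's proof. Your substitution $\xi=|\eta|\mu$ on $D_{42}$ is a legitimate continuous version of the paper's dyadic decomposition $|\xe|\sim M\lesssim 1$; the mechanisms you name (the constraint $|\xi|\lesssim|\eta|^2$ controls the singular weight, the mixed norms absorb the remainder, and in $\mathcal I_5$ the factor $|\xe|^{-\delta}$ kills the borderline logarithm) are precisely what the paper uses.

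There is, however, a concrete gap in your handling of $D_{41}$. The bound $\|e^{-c\xi^2 t/|\xe|^2}\|_{L^p_\xi}\lesssim t^{-1/(2p)}$ is \emph{not} uniform in the transverse variable: for fixed $\eta$ with $|\eta|\sim N$ one has $\int e^{-pc\xi^2 t/|\xe|^2}\,d\xi\approx N\,t^{-1/2}$, so the $L^p_\xi$ norm scales like $|\xe|^{1/p}t^{-1/(2p)}$, not $t^{-1/(2p)}$. Your parenthetical justification $\xi^2/|\xe|^2\le|\xi|/4$ points the wrong way---it gives a \emph{lower} bound $e^{-c\G_{2,2}t}\ge e^{-c|\xi|t/4}$ on the Gaussian, not an upper one. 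This matters for the second bound in $\mathcal I_1$, and hence for every subsequent item that invokes it on $D_{41}$ (e.g.\ $\mathcal I_{21}$). The paper resolves this by running the dyadic decomposition $|\xe|\sim M$ over \emph{all} of $D_4$: on each shell $e^{-c\xi^2 M^{-2}t}$ is an honest 1D Gaussian, H\"older in $\xi$ produces $M^{1/r}t^{-1/(2r)}$, the $\eta$-integral over $|\eta|\lesssim M$ contributes $M^{(2-r)/(2r)}$, and the resulting $\sum_M M^{2/r-1/2}$ is exactly what the weight $\langle\nabla\rangle^{2/r-1/2+\delta}$ absorbs---this is where that exponent and the strict $\delta>0$ come from. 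Your $D_{42}$ substitution is this argument in disguise (since $|\xe|\approx|\eta|$ there lets $|\eta|$ play the role of $M$), but on $D_{41}$ you need the genuine dyadic version.

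A minor side point: the claim $\G_{2,2}\le 1/16$ on all of $D_4$ is false (on $D_{41}$ one only gets $\G_{2,2}<|\xe|^2/16$, which is unbounded); you only have $\G_{2,2}\le 1$, but that already suffices for the pointwise inequality in your Step~1.
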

%\begin{rem}\label{rkge1}
%In (\ref{D21})$_5$, we need $k\ge 1$ to ensure that the norm on
%$\|\widehat{f}\|_{L^1}$ is one upper bound.
%\end{rem}
\begin{proof}
\underline{(\ref{D21})$_1$}
Using
$A^ke^{-A^2t}\lesssim t^{-\frac{k}{2}},\ \forall\ A>0,$
it is easy to get
\begin{equation}\label{q6}
\mathcal{I}_1\lesssim\ \langle t\rangle^{-\frac{k}{2}}
\|e^{-\frac{\xi^2}{2|\xe|^2}t}\widehat{f}\|_{L^r
  (D_4)}\lesssim\ \langle t\rangle^{-\frac{k}{2}}
\|\widehat{f}\|_{L^r
  }.
  \end{equation}
Using dyadic decomposition and H\"{o}lder's inequality, we have
$$
\begin{aligned}
\|e^{-\frac{\xi^2}{2|\xe|^2}t}\widehat{f}\|_{L^r
  (D_4)}
  \lesssim&\ \sum_{M\ge 0}
  \|e^{-c\xi^2M^{-2}t}\widehat{P_Mf}\|_{L^r}
  \lesssim\ \sum_{M\ge 0}\|e^{-c\xi^2M^{-2}t}\|_{L^r_\xi}
  \|\widehat{P_Mf}\|_{L^r_\eta(L^\infty_\xi)}\\
  \lesssim&\ t^{-\frac{1}{2r}}\sum_{M\ge 0}M^\frac{1}{r}
  \|\widehat{P_Mf}\|_{L^2_\eta(L^\infty_\xi)}(\int_{|\eta|\lesssim
  M}d\eta)^\frac{2-r}{2r}\\
  \lesssim&\ t^{-\frac{1}{2r}}\sum_{M\ge 0}M^{\frac{2}{r}-\frac{1}{2}}
  \|\widehat{P_Mf}\|_{L^2_\eta(L^\infty_\xi)}\\
  \lesssim&\
  t^{-\frac{1}{2r}}\Big(\sum_{M\ge 1}M^{\frac{2}{r}-\frac{1}{2}}
  \langle M\rangle^{-\frac{2}{r}+\frac{1}{2}-\delta}
  \|\langle|\xe|\rangle^{\frac{2}{r}-\frac{1}{2}+\delta}
  \widehat{f}\|_{L^2_\eta(L^\infty_\xi)}\\
  &+\sum_{0\le M<1} M^{\frac{2}{r}-\frac{1}{2}}\|\widehat{f}\|_{L^2_\eta(L^\infty_\xi)}\Big)\\
  \lesssim&\
  t^{-\frac{1}{2r}}\|\langle\na\rangle^{\frac{2}{r}-\frac{1}{2}+\delta}f\|_{L^1_x(L^2_y)}
  \Big(\sum_{M\ge 1}
  M^{-\delta}
  +\sum_{0\le M<1}M^{\frac{2}{r}-\frac{1}{2}}\Big)\\
  \lesssim&\ t^{-\frac{1}{2r}}\|\langle\na \rangle^{\frac{2}{r}-\frac{1}{2}
  +\delta}f\|_{L^1_x(L^2_y)}.
\end{aligned}
$$
So
$$
 \mathcal{I}_1 \lesssim\  t^{-\frac{k}{2}-\frac{1}{2r}}\|\langle \na\rangle^{\frac{2}{r}-\frac{1}{2}
  +\delta}f\|_{L^1_x(L^2_y)},
$$
which, along with (\ref{q6}) and
$\|\widehat{f}\|_{L^r}
\lesssim\ \|\langle \na\rangle^{\frac{2}{r}-\frac{1}{2}
  +\delta}f\|_{L^1_x(L^2_y)}$
 yields (\ref{D21})$_1$.\\
\underline{(\ref{D21})$_2$} Thanks to (\ref{D4f}),
 we have
$$\mathcal{I}_2\le\ \|\G_{k,k+1}
 e^{-\frac{\xi^2}{|\xe|^2}t}\widehat{f}\|_{L^1
  (D_{41})}+\|\G_{k,k+1}
 e^{-\frac{\xi^2}{|\xe|^2}t}\widehat{f}\|_{L^1
  (D_{42})}=\mathcal{I}_{21}+\mathcal{I}_{22}.$$
By (\ref{D21})$_1$ for   $r=1$, one has
$$\mathcal{I}_{21}\lesssim
\|\G_{k,k}
 e^{-\frac{\xi^2}{|\xe|^2}t}\widehat{f}\|_{L^1
  (D_{41})}
  \lesssim \langle t\rangle^{-\frac{k+1}{2}}\|\langle\na\rangle^{\frac{3}{2}+\delta}f\|_{L^1_x(L^2_y)}.$$
By dyadic decomposition and H\"{o}lder's inequality, we infer
$$
\begin{aligned}
 \mathcal{I}_{22}
\lesssim&\ \sum_{M\lesssim1}
\||\xi|^{k}M^{-k-1}e^{-c\xi^2M^{-2}t}\widehat{P_Mf}\|_{L^1(D_{42})}\\
\lesssim&\ t^{-\frac{k}{2}}\sum_{M\lesssim1}
M^{-1}\|e^{-\frac{c}{2}\xi^2M^{-2}t}
\widehat{P_Mf}\|_{L^1(D_{42})}\\
\lesssim&\ t^{-\frac{k}{2}}\sum_{M\lesssim1}
M^{-1}\|e^{-\frac{c}{2}\xi^2M^{-2}t}\|_{L^1_\xi}\|
\widehat{P_Mf}\|_{L^\infty}\int_{|\eta|\lesssim M}d\eta\\
\lesssim&\ t^{-\frac{k+1}{2}}\|\widehat{f}\|_{L^\infty}
\sum_{M\lesssim1}M
\lesssim\ t^{-\frac{k+1}{2}}\|f\|_{L^1},
\end{aligned}
$$
which, together with
$\mathcal{I}_{22}\lesssim \|\widehat{f}\|_{L^\infty}\||\xe|^{-1}
\|_{L^1(|\xe|<1)}\lesssim\ \|f\|_{L^1}$
leads
\begin{equation}\label{I22}
\mathcal{I}_{22}
\lesssim\ \langle t\rangle^{-\frac{k+1}{2}}
\|f\|_{L^1}.
\end{equation}
Combining with the estimates of  $\mathcal{I}_{21}$ and (\ref{I22}) can yield
the desired result.\\
\underline{(\ref{D21})$_3$} Using (\ref{D4f}) again,
$$\mathcal{I}_2'\le\ \|\G_{k+1,k+2}
 e^{-\frac{\xi^2}{|\xe|^2}t}\widehat{f}\|_{L^1
  (D_{41})}+\|\G_{k+1,k+2}
 e^{-\frac{\xi^2}{|\xe|^2}t}\widehat{f}\|_{L^1
  (D_{42})}=\mathcal{I}_{21}'+\mathcal{I}_{22}'.$$
It is easy to obtain
$$\mathcal{I}_{21}'
\le\ \|\G_{k+1,k+1}
 e^{-\frac{\xi^2}{|\xe|^2}t}\widehat{f}\|_{L^1
  (D_{41})}\le \langle t\rangle^{-\frac{k+1}{2}}\|\widehat{f}\|_{L^1}.$$
Using dyadic decomposition, $|\xi|\lesssim |\xe|^2$ in $D_4$ and H\"{o}lder's inequality, we have
$$
\begin{aligned}
\mathcal{I}_{22}'
\lesssim&\ \sum_{M\lesssim1}
\||\xi|^{k+1}M^{-k-2}e^{-c\xi^2M^{-2}t}\widehat{P_Mf}\|_{L^1(D_{42})}\\
\lesssim&\ t^{-\frac{k+1}{2}+\frac{1}{6}}
\sum_{M\lesssim1}
\||\xi|^\frac{1}{3}
M^{-\frac{4}{3}}
e^{-\frac{c}{2}
\xi^2M^{-2}t}\widehat{P_Mf}\|_{L^1(D_{42})}\\
\lesssim&\
t^{-\frac{k+1}{2}+\frac{1}{6}}
\sum_{M\lesssim1} M^{-\frac{2}{3}}
\|e^{-\frac{c}{2}
\xi^2M^{-2}t}\|_{L^2_\xi}\|\widehat{P_Mf}\|_{L^2}
(\int_{|\eta|\lesssim M}d\eta)^\frac{1}{2}\\
\lesssim&\
t^{-\frac{k+1}{2}-\frac{1}{12}}\sum_{M\lesssim1}M^{-\frac{2}{3}+\frac{1}{2}
+\frac{1}{2}}\|P_M f\|_{L^2}\\
\lesssim&\ t^{-\frac{k+1}{2}-\frac{1}{12}}\|f\|_{L^2}.
\end{aligned}
$$
Due to $\mathcal{I}_{22}'\lesssim \|\widehat{f}\|_{L^1}$, one has
$$\mathcal{I}_{22}'\lesssim \langle t\rangle^{-\frac{k+1}{2}}
(\|f\|_{L^2}+\|\widehat{f}\|_{L^1}).$$
As a result, we deduce the desired bound
by combining the estimate of $\mathcal{I}_{21}'$.\\
\underline{ (\ref{D21})$_4$}
 Since the estimate of $\mathcal{I}_3$ is similar to the estimate of $\mathcal{I}_{22}'$, it suffices to prove
 \begin{equation}\label{adi}
\mathcal{I}_3\lesssim \langle t\rangle^{-\frac{k+2}{2}}\|f\|_{L^1}
,\ k\ge -1.
\end{equation}
Using dyadic decomposition and H\"{o}lder's inequality, we have
$$
\begin{aligned}
\mathcal{I}_3
\lesssim&\ t^{-\frac{k+1}{2}}\sum_{M\lesssim1} M^{-1}
\|e^{-\frac{c}{2}\xi^2M^{-2}t}\widehat{P_Mf}\|_{L^1(D_{42})}\\
\lesssim&\ t^{-\frac{k+1}{2}}\sum_{M\lesssim1} M^{-1}
\|e^{-\frac{c}{2}\xi^2M^{-2}t}\|_{L^1_\xi}
\|\widehat{P_Mf}\|_{L^\infty_{\xi,\eta}}\int_{|\eta|\lesssim M}d\eta\\
\lesssim&\ t^{-\frac{k+2}{2}}\sum_{M\lesssim1} M^{-1+1+1}\|P_Mf\|_{L^1}
\lesssim\ t^{-\frac{k+2}{2}}\|f\|_{L^1},
\end{aligned}
$$
which,
along with $\mathcal{I}_3\lesssim\ \|\widehat{f}\|_{L^\infty}\lesssim\ \|f\|_{L^1}$
can lead (\ref{adi}).\\
\underline{ (\ref{D21})$_5$ and (\ref{D21})$_6$}
Like the previous arguments, we have
$$\mathcal{I}_4\le\ \|\G_{k+2,k+4}
 e^{-\frac{\xi^2}{|\xe|^2}t}\widehat{f}\|_{L^1
  (D_{41})}+\|\G_{k+2,k+4}
 e^{-\frac{\xi^2}{|\xe|^2}t}\widehat{f}\|_{L^1
  (D_{42})}=\mathcal{I}_{41}+\mathcal{I}_{42}.$$
Using $|\xe|\ge 1$ in $D_{41}$, we can get
$$\mathcal{I}_{41}\le \|\G_{k+2,k+2}e^{-\frac{\xi^2}{|\xe|^2}t}\widehat{f}\|_{L^1}
\le \langle t\rangle^{-\frac{k+2}{2}}\|\widehat{f}\|_{L^1}.$$
 By $|\xi|\lesssim |\xe|^2$ in $D_{4}$,
 dyadic   decomposition, H\"{o}lder's inequality,
 $1\le q\le 2$ and $1/p+1/q>1$,  one has
\begin{equation}\label{q7}
\begin{aligned}
\mathcal{I}_{42}
\lesssim&\ \sum_{M\lesssim 1}
\||\xi|^{k+2}M^{-k-4}e^{-c\xi^2M^{-2}t}\widehat{P_Mf}
\|_{L^1(D_{42})}\\
\lesssim&\ \sum_{M\lesssim 1}
\|(\xi^2M^{-2})^{\frac{k+2}{2}-\frac{1}{2p}}
(\xi^2M^{-2})^\frac{1}{2p}M^{-2}
e^{-c\xi^2M^{-2}t}\widehat{P_Mf}
\|_{L^1(D_{42})}\\
\lesssim&\ t^{-\frac{k+2}{2}+\frac{1}{2p}}
\sum_{M\lesssim1} M^{-2+\frac{1}{p}}
\|e^{-\frac{c}{2}\xi^2M^{-2}t}\widehat{P_Mf}
\|_{L^1(D_{42})}\\
\lesssim&\ t^{-\frac{k+2}{2}+\frac{1}{2p}}
\sum_{M\lesssim1} M^{-2+\frac{1}{p}}
\|e^{-\frac{c}{2}\xi^2M^{-2}t}\|_{L^q_\xi}
\|\widehat{P_Mf}
\|_{L^{\frac{q}{q-1}}_\xi(L^\infty_\eta)}\int_{|\eta|\lesssim M}d\eta\\
\lesssim&\ t^{-\frac{k+2}{2}+\frac{1}{2p}-\frac{1}{2q}}
\sum_{M\lesssim1}M^{-1+\frac{1}{p}+\frac{1}{q}}
\|P_Mf\|_{L^1_y(L^q_x)}\\
\lesssim&\ t^{-\frac{k+2}{2}+\frac{1}{2p}-\frac{1}{2q}}\|f\|_{L^1_y(L^q_x)},\ (k\ge -1),
\end{aligned}
\end{equation}
which,
together with $\mathcal{I}_{42}\le \|\widehat{f}\|_{L^1}$, yield
\begin{equation}\label{q2}
\mathcal{I}_{42}\le\ \min\{\|\widehat{f}\|_{L^1},
t^{-\frac{k+2}{2}+\frac{1}{2p}-\frac{1}{2q}}\|f\|_{L^1_y(L^q_x)}\}.
\end{equation}
Using  (\ref{q2}) for $p=q=\frac{4}{3}$ and  the estimate of
$\mathcal{I}_{41}$, we can get the desired estimate (\ref{D21})$_5$.
It follows from using
 $|\xi|\lesssim|\xe|^2$ that
$\mathcal{I}_5\lesssim\ \||\xe|^{-1}\widehat{f}\|_{L^1(D_{42})}
\lesssim\ \|f\|_{L^1}$. Notice that (\ref{q7}) holds for $k\ge -1$, so we can obtain
$$\mathcal{I}_5\lesssim\ t^{-\frac{k+1}{2}+\frac{1}{2p}-\frac{1}{2q}}\|f\|_{L^1_y(L^q_x)},
\ k\ge 0.$$
Finally, to complete the estimate of (\ref{D21})$_6$, it suffices to prove
\begin{equation}\label{q4}
 \mathcal{I}_5\lesssim\ \LHH^{-\frac{k+2}{2}}\||\na|^{-\delta}f\|_{L^1}.
 \end{equation}
It is easy to get
  \begin{equation}\label{q5}
\mathcal{I}_5\lesssim\ \LHH^{-\frac{k+1}{2}}
\|\frac{1}{|\xe|^2}e^{-\frac{\xi^2}{2|\xe|^2}t}
\widehat{f}\|_{L^1(D_{42})}.
\end{equation}
Using dyadic decomposition and H\"{o}lder's inequality, one obtains
$$
\begin{aligned}
\|\frac{1}{|\xe|^2}e^{-\frac{\xi^2}{2|\xe|^2}t}
\widehat{f}\|_{L^1(D_{42})}\lesssim&
\sum_{M\lesssim 1}M^{-2}
\|e^{-\frac{c}{2}\xi^2M^{-2}t}\widehat{P_Mf}\|_{L^1(D_{42})}\\
\lesssim&\ \sum_{M\lesssim 1}M^{-2}\|e^{-\frac{c}{2}\xi^2M^{-2}t}\|_{L^1_\xi}
\|\widehat{P_Mf}\|_{L^\infty_{\xi,\eta}}\int_{|\eta|\lesssim M}d\eta\\
\lesssim&\ t^{-\frac{1}{2}}\sum_{M\lesssim1}\|P_Mf\|_{L^1}
\lesssim\ t^{-\frac{1}{2}}\||\na|^{-\delta}f\|_{L^1},
\end{aligned}
$$
which, along with
$$\|\frac{1}{|\xe|^2}e^{-\frac{\xi^2}{2|\xe|^2}t}
\widehat{f}\|_{L^1(D_{42})}
\le \|\frac{1}{|\xe|^2}\widehat{f}\|_{L^1(D_{42})}
\lesssim\ \|\frac{1}{|\xe|^{2-\delta}}\|_{L^1(D_{42})}
\||\xe|^{-\delta}\widehat{f}\|_{L^\infty(D_{42})}
\lesssim \||\na|^{-\delta}f\|_{L^1}$$
follows
$$\|\frac{1}{|\xe|^2}e^{-\frac{\xi^2}{2|\xe|^2}t}\widehat{f}
\|_{L^1(D_{42})}\lesssim
\ \LHH^{-\frac{1}{2}}\||\na|^{-\delta}f\|_{L^1}.$$
Hence   by (\ref{q5}), we can get  the  desired estimate.\\
\underline{ (\ref{D21})$_7$}
Like the previous process, we have
$$\mathcal{I}_6\le\ \|\G_{k+1,k+1}
 e^{-\frac{\xi^2}{|\xe|^2}t}\widehat{f}\|_{L^2
  (D_{41})}+\|\G_{k+1,k+2}
 e^{-\frac{\xi^2}{|\xe|^2}t}\widehat{f}\|_{L^2
  (D_{42})}=\mathcal{I}_{61}+\mathcal{I}_{62}.$$
 $\mathcal{I}_{61}$ can be easily bounded by $C\LHH^{-\frac{k+1}{2}}\|f\|_{L^2}$. For $\mathcal{I}_{62}$, using dyadic decomposition and H\"{o}lder's inequality, we have
 $$
 \begin{aligned}
 \mathcal{I}_{62}\lesssim&\ \sum_{M\lesssim1}\||\xi|^{k+1}
 M^{-k-2}e^{-c\xi^2M^{-2}t}\widehat{P_Mf}\|_{L^2(D_{42})}\\
 \lesssim&\ t^{-\frac{k}{2}}
 \sum_{M\lesssim1}\|(|\xi|M^{-1})^\frac{1}{2}
 (|\xi|M^{-3})^\frac{1}{2}
 e^{-\frac{c}{2}\xi^2M^{-2}t}\widehat{P_Mf}\|_{L^2(D_{42})}\\
 \lesssim&\ t^{-\frac{2k+1}{4}} \sum_{M\lesssim1}M^{-\frac{1}{2}}
 \| e^{-\frac{c}{2}\xi^2M^{-2}t}\|_{L^2_\xi}\|\widehat{P_Mf}
 \|_{L^\infty}(\int_{|\eta|\lesssim M}d\eta)^\frac{1}{2}\\
 \lesssim&\ t^{-\frac{k+1}{2}}\sum_{M\lesssim1}M^\frac{1}{2}\|P_M f\|_{L^1}\lesssim\ t^{-\frac{k+1}{2}}\|f\|_{L^1},
 \end{aligned}
 $$
 which, together with the estimate of $\mathcal{I}_{61}$ implies
 $$\mathcal{I}_{6}\lesssim\ t^{-\frac{k+1}{2}}
 (\|f\|_{L^1}+\|f\|_{L^2}).$$
In addition,  $\mathcal{I}_{6}\lesssim\ \|f\|_{L^2}$. So
we can get the desired estimate.
\end{proof}
\subsection{Decay  estimate III}
Thanks to Lemma \ref{l4.1} and Lemma \ref{l4.2}, we can get the following lemmas, the detailed proofs of which  are showed in the Appendix.
 \begin{lemma}\label{M1}
 Let $M_1(\p,t)$ be the operator defined by (\ref{MM}), $1\le r\le2$, $\delta>0$, then there holds
 \begin{equation}\label{End1}
 \begin{aligned}
 1)\ &\|M_1(\p,t) f\|_{\F L^2(D_4)}
 \lesssim\ \LHH^{-\frac{1}{2}}\min\{\|f\|_{L^1\cap L^2},\||\na|^{-1}f\|_{L^2}\};\\
 2)\  &\||\na|M_1(\p,t) f\|_{\F L^r(D_4)}
 \lesssim\ \LHH^{-\frac{1}{2}}\min\{\|\widehat{f}\|_{L^r},\LHH^{-\frac{1}{2r}}
 \|\langle\na\rangle^{\frac{2}{r}-\frac{1}{2}+\delta}f\|_{L^1_x(L^2_y)}\};\\
 3)\ &\|\p_xM_1(\p,t) f\|_{\F L^r(D_4)}
 \lesssim\ \LHH^{-1}\min\{\|\widehat{f}\|_{L^r},\LHH^{-\frac{1}{2r}}
 \|\langle\na\rangle^{\frac{2}{r}-\frac{1}{2}+\delta}f\|_{L^1_x(L^2_y)}\};\\
4)\ &\|M_1(\p,t) f\|_{\F L^1(D_4)}
 \lesssim\ \LHH^{-1}(
 \|\langle\na\rangle^{1.51}f\|_{L^1_x(L^2_y)}+\|f\|_{L^1});\\
 5)\ &\||\na|^{-1}M_1(\p,t) f\|_{\F L^1(D_{42})}
 \lesssim\ \LHH^{-\frac{1}{2}}\min\{\|f\|_{L^1}+\|f\|_{L^1_y(L^\frac{4}{3}_x)},\LHH^{-\frac{1}{2}}
 \||\na|^{-\delta}f\|_{L^1}\};\\
 6)\ &\|\mathcal{R}_1M_1(\p,t) f\|_{\F L^1(D_{42})}
 \lesssim\ \LHH^{-\frac{3}{2}}\|f\|_{L^1};\\
 7)\ &\|\mathcal{R}_1M_1(\p,t) f\|_{\F L^1(D_4)}
 \lesssim\ \LHH^{-1}
 %\|\langle\na\rangle^{2.51}f\|_{L^1_x(L^2_y)};
 (\|\widehat{f}\|_{L^1}+\|f\|_{L^2})\\
 8)\ &\||\na|\mathcal{R}_1M_1(\p,t) f\|_{\F L^1(D_4)}
 +\|\mathcal{R}_{11}M_1(\p,t) f\|_{\F L^1(D_4)}
 \lesssim\ \LHH^{-1}\|\widehat{f}\|_{L^1};\\
 9)\ &\|\p_x\mathcal{R}_1M_1(\p,t) f\|_{\F L^1(D_4)}
+\||\na|\mathcal{R}_{11}M_1(\p,t) f\|_{\F L^1(D_4)}
 \lesssim\ \LHH^{-\frac{3}{2}}\|\widehat{f}\|_{L^1},
 \end{aligned}
 \end{equation}
 where  $D_4$ and $D_{42}$ are defined by (\ref{domain}) and (\ref{D4f}).
 \end{lemma}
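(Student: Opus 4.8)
The plan is to reduce each of the nine bounds to a frequency-localized estimate for an explicit multiplier of the form $\G_{a,b}e^{-\G_{2,2}t}$ on $D_4$ (or on $D_{42}$), and then to invoke the matching item of Lemma \ref{l4.2}. The single structural input is Proposition \ref{p3.1}(4): for $\xe\in D_4$ one has $|\widehat{M_1}(\xe,t)|\lesssim\frac{|\xi|}{|\xe|^2}e^{-\xi^2t/|\xe|^2}=\G_{1,2}(\xe)\,e^{-\G_{2,2}(\xe)t}$. Hence, if $P$ is any Fourier multiplier with symbol $m$, then $\|PM_1(\p,t)f\|_{\F L^r(D_4)}\le\||m|\,\G_{1,2}e^{-\G_{2,2}t}\widehat f\|_{L^r(D_4)}$; taking $m=|\xe|$, $m=i\xi$, $m=-i\xi/|\xe|$ (the symbol of $\mathcal{R}_1$), $m=-\xi^2/|\xe|^2$ (the symbol of $\mathcal{R}_{11}$), or $m=|\xe|^{-1}$ turns the prefactor $\G_{1,2}$ into $\G_{1,1}$, $\G_{2,2}$, $\G_{2,3}$, $\G_{3,4}$, or $\G_{1,3}$ respectively, and compositions shift the two indices in the obvious way. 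Since $|\xi|\le|\xe|$ and $|\xi|\le|\xe|^2/4$ on $D_4$, every monomial $\G_{a,b}$ that arises is $\lesssim1$ there and, moreover, can be dominated by one whose indices fit one of $\mathcal{I}_1,\dots,\mathcal{I}_6,\mathcal{I}_2'$; for instance $\G_{3,4}\le\frac{1}{4}\G_{2,2}$ and $\G_{2,3}\le\frac{1}{4}\G_{1,1}$ on $D_4$.

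With this dictionary in hand the nine estimates become bookkeeping. I would obtain (1) from $\mathcal{I}_6$ (Lemma \ref{l4.2}(7)) with $k=0$ for the $\|f\|_{L^1\cap L^2}$-part, and from the pointwise bound $\G_{1,2}e^{-\G_{2,2}t}\lesssim|\xe|^{-1}\LHH^{-\frac{1}{2}}$ on $D_4$ (valid because $|\xi|/|\xe|\le1$ there) together with Plancherel for the $\||\na|^{-1}f\|_{L^2}$-part; (2) and (3) from $\mathcal{I}_1$ (Lemma \ref{l4.2}(1)) with $k=1$ and $k=2$; (4) from $\mathcal{I}_2$ (Lemma \ref{l4.2}(2)) with $k=1$ and $\delta$ chosen so small that $\frac{3}{2}+\delta\le1.51$; (5) from $\mathcal{I}_5$ (Lemma \ref{l4.2}(6)) with $k=0$, using $p=q=\frac{4}{3}$ (so that $1/p+1/q=\frac{3}{2}>1$) in the first alternative and the $\|f\|_{L^1}$ and $\LHH^{-1}\||\na|^{-\delta}f\|_{L^1}$ alternatives for the remaining two; (6) from the bound (\ref{adi}) established inside the proof of Lemma \ref{l4.2}(4) with $k=1$; (7) from $\mathcal{I}_2'$ (Lemma \ref{l4.2}(3)) with $k=1$; (8) from $\mathcal{I}_1$ with $r=1$ and $k=2$, noting that $|\na|\mathcal{R}_1M_1$ carries prefactor $\G_{2,2}$ while $\mathcal{R}_{11}M_1$ carries $\G_{3,4}\le\frac{1}{4}\G_{2,2}$; and (9) from $\mathcal{I}_1$ with $r=1$ and $k=3$, since both $\p_x\mathcal{R}_1M_1$ and $|\na|\mathcal{R}_{11}M_1$ carry prefactor $\G_{3,3}$.

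The delicate point---and essentially the only one---is reproducing the full minima. Each cited item of Lemma \ref{l4.2} already supplies the large-time rate, so it remains to check the non-decaying alternatives ($\|f\|_{L^1}$, $\|\widehat f\|_{L^r}$, $\|f\|_{L^2}$): these are obtained simply by discarding $e^{-\G_{2,2}t}$ and using $\G_{a,b}\lesssim1$ on $D_4$, and since $\LHH^{-\theta}$ is bounded near $t=0$ the two regimes glue with no loss. A secondary, purely technical, nuisance is that items (4)--(6) are phrased with the anisotropic norm $\|\langle\na\rangle^{s}f\|_{L^1_x(L^2_y)}$ or restricted to $D_{42}$, where the negative powers of $|\xe|$ in $\G_{1,3}$, $\G_{2,3}$ are genuinely singular; this is exactly why those cases must be channelled through the sharp dyadic bounds of Lemma \ref{l4.2} rather than through the cruder heat-kernel estimates of Lemma \ref{l4.1}, and why Lemma \ref{l4.2} was set up with precisely this list of index patterns. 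As no step requires more than a citation plus an elementary monotonicity inequality, the detailed computation is carried out in the Appendix.
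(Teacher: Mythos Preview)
Your proposal is correct and follows essentially the same route as the paper: both proofs use the pointwise bound $|\widehat{M_1}(\xe,t)|\lesssim\G_{1,2}e^{-\G_{2,2}t}$ from Proposition~\ref{p3.1}(4), convert each operator $P$ into the corresponding monomial $\G_{a,b}$, and then cite the matching item of Lemma~\ref{l4.2} with exactly the indices you list (e.g.\ (\ref{D21})$_7$ and (\ref{D21})$_1$ for item~(1), (\ref{D21})$_1$ with $k=1,2$ for items~(2)--(3), (\ref{D21})$_2$ for item~(4), (\ref{D21})$_6$ with $p=q=4/3$ for item~(5), (\ref{D21})$_4$ for item~(6), (\ref{D21})$_3$ for item~(7), and (\ref{D21})$_1$ with $r=1$, $k=2,3$ for items~(8)--(9)). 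The only cosmetic difference is that for the $\||\na|^{-1}f\|_{L^2}$-alternative in item~(1) you argue via the direct pointwise bound $\G_{1,2}e^{-\G_{2,2}t}\lesssim|\xe|^{-1}\LHH^{-1/2}$, whereas the paper rewrites this as $\G_{1,1}e^{-\G_{2,2}t}$ acting on $\widehat{|\na|^{-1}f}$ and cites (\ref{D21})$_1$; the two are identical.
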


 \begin{lemma}\label{M2}
 Let $M_2(\p,t)$ be the operator defined by (\ref{MM}),  $1\le r\le2$, $\delta>0$, then there holds
\begin{equation}\label{End2}
\begin{aligned}
 1)\ & \|M_2(\p,t) f\|_{\F L^r(D_4)}
 \lesssim\ \LHH^{-\frac{1}{2r}}
 \|\langle\na\rangle^{\frac{2}{r}-\frac{1}{2}+\delta}f\|_{L^1_x(L^2_y)};\\
 2)\ &\|\p_xM_2(\p,t) f\|_{\F L^r(D_4)}
 \lesssim\ \LHH^{-\frac{1}{2}}\min\{\|\na f\|_{\F L^r},\LHH^{-\frac{1}{2r}}
 \|\langle\na\rangle^{\frac{2}{r}+\frac{1}{2}+\delta}f\|_{L^1_x(L^2_y)}\};\\
  3)\ & \|\p_x^2M_2(\p,t) f\|_{\F L^2(D_4)}
 \lesssim\ \LHH^{-1}\|\na^2f\|_{L^2};\\
  4)\ & \||\na|^{-1}M_2(\p,t) f\|_{\F L^1(D_{42})}
 \lesssim\ \LHH^{-\frac{1}{2}}\|f\|_{L^1};\\
  5)\ & \|\mathcal{R}_1^lM_2(\p,t) f\|_{\F L^1(D_4)}
 \lesssim\ \LHH^{-\frac{l}{2}}\min\{\|\widehat{f}\|_{L^1},
 \LHH^{-\frac{1}{2}}
 \|\langle\na\rangle^{1.51}f\|_{L^1_x(L^2_y)}\}\ (l=1,2);\\
  6)\ & \|\p_x\mathcal{R}_1M_2(\p,t) f\|_{\F L^1(D_4)}
 \lesssim\ \LHH^{-1}
 \|\na f\|_{\F L^1},
\end{aligned}
\end{equation}
 where $\mathcal{R}_1^1=\mathcal{R}_1$, $\mathcal{R}_1^2=\mathcal{R}_{11}$
 and
 $D_4$ and $D_{42}$ are defined by (\ref{domain}) and (\ref{D4f}).
\end{lemma}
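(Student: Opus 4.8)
The plan is to read off every line of (\ref{End2}) from the single pointwise bound for the symbol of $M_2(\p,t)$ on $D_4$ given by Proposition \ref{p3.1}(4), namely $|\widehat{M_2}(\xe,t)|\lesssim e^{-\frac{\xi^2}{|\xe|^2}t}=e^{-\G_{2,2}t}$, combined with Decay estimate II (Lemma \ref{l4.2}). Each operator on the left of (\ref{End2}) has Fourier symbol $m(\xe)\widehat{M_2}(\xe,t)$ with $|m(\xe)|\lesssim\G_{a,b}(\xe)=|\xi|^a/|\xe|^b$ for suitable $a,b\ge 0$ (one has $a\ge b$ in parts 1)--3) and 5)--6), and $(a,b)=(0,1)$ in part 4)), so on $D_4$ the symbol is controlled by $\G_{a,b}e^{-\G_{2,2}t}$. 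When $a\ge b$ I would use $\G_{a,b}=\G_{a,a}|\xe|^{a-b}$ to rewrite $\G_{a,b}e^{-\G_{2,2}t}\widehat f=\G_{a,a}e^{-\G_{2,2}t}\,\widehat{|\na|^{a-b}f}$, transferring the surplus homogeneity off the multiplier onto $|\na|^{a-b}f$; the required bound is then exactly $\mathcal{I}_1$ of Lemma \ref{l4.2}(1) applied with $k=a$ to $|\na|^{a-b}f$, the decay $\LHH^{-a/2}$ being produced inside that estimate from $A^a e^{-A^2 t}\lesssim t^{-a/2}$ with $A=|\xi|/|\xe|$ (cf.\ (\ref{q6})).

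Concretely: part 1) is $\mathcal{I}_1$ with $k=0$, keeping only the second alternative in its minimum (no decaying $\|\widehat f\|_{L^r}$-only alternative is available, as explained below). Part 2) uses $|i\xi|=\G_{1,0}=\G_{1,1}|\xe|$, so $\mathcal{I}_1$ with $k=1$ on $|\na|f$ yields the prefactor $\LHH^{-1/2}$ and the two options $\|\na f\|_{\F L^r}$ and $\LHH^{-1/2r}\|\langle\na\rangle^{2/r+1/2+\delta}f\|_{L^1_x(L^2_y)}$, the latter after $\langle\na\rangle^{2/r-1/2+\delta}|\na|\lesssim\langle\na\rangle^{2/r+1/2+\delta}$. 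Part 3) uses $\xi^2=\G_{2,0}=\G_{2,2}|\xe|^2$ together with $\G_{2,2}e^{-\G_{2,2}t}\lesssim\LHH^{-1}$ and Plancherel, $\||\xe|^2\widehat f\|_{L^2}\approx\|\na^2 f\|_{L^2}$. Parts 5)--6) are the same device with $|\widehat{\mathcal{R}_1^l}|=\G_{l,l}$ ($\mathcal{I}_1$ with $k=l$, $r=1$; the loss $\delta$ is fixed so that $3/2+\delta=1.51$) and $|\widehat{\p_x\mathcal{R}_1}|=\G_{2,1}=\G_{2,2}|\xe|$ ($\mathcal{I}_1$ with $k=2$, $r=1$ on $|\na|f$). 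The only line outside this template is part 4), where $|\na|^{-1}$ gives $\G_{0,1}=1/|\xe|$, a singular (but locally integrable) low-frequency weight rather than a surplus of $|\xe|$; here $|\widehat{|\na|^{-1}M_2}(\xe,t)|\lesssim|\xe|^{-1}e^{-\G_{2,2}t}$ on $D_{42}$, which is precisely the quantity bounded as $\mathcal{I}_{22}$ inside the proof of Lemma \ref{l4.2}(2), and (\ref{I22}) with $k=0$ gives exactly $\LHH^{-1/2}\|f\|_{L^1}$.

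I expect no conceptual obstacle, only bookkeeping: checking line by line that the $|\xe|$-powers moved onto $f$, the $t^{-1/2}$-powers extracted from $A^j e^{-A^2 t}$, and the exponent supplied by the cited line of Lemma \ref{l4.2} combine to the exact rate and exact $f$-norm in (\ref{End2}). The structural point worth flagging is the contrast with $\widehat{M_1}$: on $D_4$ the latter obeys $\frac{|\xi|}{|\xe|^2}e^{-\G_{2,2}t}=\G_{1,1}\cdot|\xe|^{-1}e^{-\G_{2,2}t}$, which already carries a gain $\LHH^{-1/2}$ together with a smoothing, whereas the $D_4$-bound for $\widehat{M_2}$ has no symbol prefactor at all; consequently all of the decay for $M_2 f$ itself (part 1)) must be bought with $x$-regularity of $f$ through the $L^1_x(L^2_y)$ alternative in $\mathcal{I}_1$. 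Since Lemma \ref{M2} is auxiliary, the full computations are carried out in the Appendix, as announced before the statement.
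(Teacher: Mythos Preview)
Your proposal is correct and follows essentially the same route as the paper's proof: in each line you reduce the symbol to $\G_{a,b}e^{-\G_{2,2}t}$ via Proposition~\ref{p3.1}(4), transfer any surplus $|\xe|$-power onto $f$, and read off the bound from the appropriate case of $\mathcal{I}_1$ in Lemma~\ref{l4.2}(1) (or from the $\mathcal{I}_{22}$ computation (\ref{I22}) for part 4)). The only cosmetic difference is in part 5), where the paper first pulls out $\G_{l,l}e^{-\G_{2,2}t}\lesssim\LHH^{-l/2}e^{-\frac{1}{2}\G_{2,2}t}$ and then applies $\mathcal{I}_1$ with $k=0$, whereas you invoke $\mathcal{I}_1$ directly with $k=l$; these are of course the same estimate.
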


 \begin{lemma}\label{M3}
 Let $M_3(\p,t)$ be the operator defined by (\ref{MM}),  $1\le r\le2$, then there holds
\begin{equation}\label{End3}
\begin{aligned}
1)\ &\|M_3(\p,t) f\|_{\F L^2(D_4)}
 \lesssim\ \LHH^{-\frac{1}{2}}\min\{
\||\na|^{-1}f\|_{H^1},\|f\|_{L^1\cap L^2}\};\\
2)\ &\|M_3(\p,t) f\|_{\F L^1(D_4)}
 \lesssim\ \LHH^{-1}\min\{
\|f\|_{ L^1\cap L^1_y(L^\frac{4}{3}_x)}+\|\widehat{f}\|_{L^1},\\
&\ \ \ \ \ \ \ \ \ \ \ \ \ \ \ \ \ \ \ \ \ \ \ \ \ \ \ \ \||\na|^{-1}f\|_{L^2\cap \F L^1}+\|\widehat{f}\|_{L^1}\};\\
3)\ &\|\p_yM_3(\p,t) f\|_{\F L^2(D_4)}
 \lesssim\ \min\big\{\LHH^{-\frac{1}{2}}\|f\|_{H^1},
\LHH^{-1}
\||\na|^{-1}f\|_{H^3},\LHH^{-1}\|f\|_{L^1\cap H^1}\big\};\\
4)\ &\|\p_xM_3(\p,t) f\|_{\F L^r(D_4)}
\lesssim\ \LHH^{-1}(\|f\|_{\F L^r}+\|\p_xf\|_{\F L^r});\\
5)\ & \|\p_xM_3(\p,t) f\|_{\F L^1(D_4)}
\lesssim\ \min\big\{\LHH^{-1}(\|\p_xf\|_{\F L^1\cap L^1_y(L^\frac{4}{3}_x)}+\|\widehat{f}\|_{L^1})
,\\
&\ \ \ \ \ \ \LHH^{-\frac{3}{2}}(\|f\|_{L^2}+\|\p_xf\|_{\F L^1}+\|\widehat{f}\|_{L^1}),
\LHH^{-\frac{3}{4}}(\|\p_xf\|_{L^1_y(L^2_x)}+\|\p_xf\|_{\F L^1}
)
\big\};\\
6)\ &\|\De M_3(\p,t) f\|_{\F L^2(D_4)}
\lesssim \ \LHH^{-1}\|f\|_{H^2}\\
7)\ &\|\p_x^2\mathcal{R}_1 M_3(\p,t) f\|_{\F L^1(D_4)}
\lesssim \ \LHH^{-\frac{5}{2}}(\|f\|_{\F L^1}+\|\p_x^2f\|_{\F L^1});\\
8)\ &\||\na|\p_x M_3(\p,t) f\|_{\F L^r(D_4)}
\lesssim \ \LHH^{-\frac{3}{2}}(\|f\|_{\F L^r}+\|\p_x\na f\|_{\F L^r});\\
9)\ &\|\p_x^2 M_3(\p,t) f\|_{\F L^1(D_4)}
\lesssim \ \LHH^{-2}(\|f\|_{\F L^1}+\|\p_x^2 f\|_{\F L^1});\\
10)\ &\|\mathcal{R}_1 M_3(\p,t) f\|_{\F L^1(D_4)}
\lesssim \ \LHH^{-1}(\||\na|^{-1}f\|_{\F L^1}+\|\widehat{f}\|_{L^1});\\
11)\ &\|\p_x\mathcal{R}_1 M_3(\p,t) f\|_{\F L^1(D_4)}
\lesssim \ \LHH^{-\frac{3}{2}}(\|\p_xf\|_{\F L^1}+\|\widehat{f}\|_{L^1}),
\end{aligned}
\end{equation}
where $D_4$ is defined by (\ref{domain}).
\end{lemma}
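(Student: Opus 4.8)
\textit{Proof proposal.} Lemma~\ref{M3}, exactly like Lemmas~\ref{M1} and~\ref{M2}, is meant to be a consequence of the two master estimates already established — Lemma~\ref{l4.1} for the ``heat part'' and Lemma~\ref{l4.2} for the ``degenerate part'' — so the plan is to prove all eleven bounds by one uniform three–step scheme. First I would invoke Proposition~\ref{p3.1}(4), which on $D_4$ gives the pointwise symbol bound
\[
|\widehat{M_3}(\xe,t)|\ \lesssim\ e^{-\frac{|\xe|^2}{2}t}\ +\ \G_{2,4}\,e^{-\G_{2,2}t};
\]
note that $\widehat{M_3}$ vanishes to second order in $|\xi|$ as $\xi\to0$, which is the algebraic source of the extra decay over the bare heat flow on low frequencies. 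Consequently, for each of the Fourier multipliers $m(\na)$ occurring in the statement (products of $\p_x$, $|\na|$ and $\mathcal{R}_1$, possibly with $\p_y$ or $\De$, whose symbol $m(\xe)$ is a monomial in $|\xi|$, $|\xe|$, $\xi/|\xe|$ up to sign) and each $r\in[1,2]$,
\[
\|m(\na)M_3(\p,t)f\|_{\F L^r(D_4)}\ \lesssim\ \big\|\,m(\xe)\,e^{-\frac{|\xe|^2}{2}t}\,\widehat f\,\big\|_{L^r(D_4)}\ +\ \big\|\,m(\xe)\,\G_{2,4}\,e^{-\G_{2,2}t}\,\widehat f\,\big\|_{L^r(D_4)},
\]
and it is enough to bound the two terms on the right separately.

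For the heat term I would exploit the $D_4$ relations $|\eta|\le|\xe|$, $|\xi|\le|\xe|^2/4$, $|\xi|/|\xe|\le|\xe|/4$: each $\p_x$ counts as one power of $|\xi|$ \emph{or} as two of $|\xe|$, and each $\mathcal{R}_1$ as one power of $|\xi|/|\xe|$ \emph{or} as one of $|\xe|$. Using the two readings in the two regimes, for $t\lesssim1$ I would drop the Gaussian and absorb $m(\xe)$ into $|\xi|^{a}|\widehat f|$, which produces the $\|\p_x^{a}f\|_{\F L^r}$–type terms in the statement, while for $t\gtrsim1$ I would write $|m(\xe)|e^{-|\xe|^2t/2}\lesssim\LHH^{-\sigma/2}\langle|\xe|\rangle^{\sigma}e^{-|\xe|^2t/4}$, where $\sigma$ is the number of extra $|\xe|$–powers forced by $m$, and feed the residual heat evolution into parts~2) and~4) of Lemma~\ref{l4.1}. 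When the statement offers a norm such as $\||\na|^{-1}f\|_{H^1}$, $\||\na|^{-1}f\|_{\F L^1}$ or $\||\na|^{-\delta}f\|_{L^1}$ in place of $\|f\|_{L^2}$ or $\|\widehat f\|_{L^1}$, I would restrict to $D_{42}=D_4\cap\{|\xe|<1\}$ and trade a power of $|\xe|$ for $|\na|^{-1}$ (or $|\na|^{-\delta}$) on $f$, which is legitimate there because $|\xe|<1$.

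For the degenerate term the key algebraic step is to absorb $m(\xe)$ into $\G_{2,4}$ by the same inequalities — e.g. $|\na|\G_{2,4}=\G_{2,3}$, $\p_x\G_{2,4}=\G_{3,4}$, $\p_y\G_{2,4}\lesssim\G_{2,3}$, $\De\G_{2,4}\lesssim\G_{2,2}$, $\mathcal{R}_1\G_{2,4}=\G_{3,5}$, $\mathcal{R}_{11}\G_{2,4}=\G_{4,6}$, $\p_x^2\mathcal{R}_1\G_{2,4}=\G_{5,5}$ — and then to match $\G_{p,q}e^{-\G_{2,2}t}$ to the appropriate $\mathcal{I}_j$ of Lemma~\ref{l4.2}, according to the target $L^r$ norm and to whether $q-p$ is $0$, $1$ or $2$ (and, when $q\ge p+1$, splitting $D_4=D_{41}\cup D_{42}$ and performing on $D_{42}$ the same dyadic decomposition in $|\xe|$ as in the proof of Lemma~\ref{l4.2}). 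The half power of $t$ gained per $\mathcal{R}_1$ is transparent here: $\mathcal{R}_1$ inserts one factor $(\xi^2/|\xe|^2)^{1/2}$ in front of $e^{-\G_{2,2}t}$, hence one more $t^{-1/2}$. Finally I would add the heat– and degenerate–part bounds, keep the strongest of the several admissible right–hand sides, and move between $\F L^r$ norms and the mixed norms $\|\langle\na\rangle^{s}f\|_{L^1_x(L^2_y)}$ with the embedding of Lemma~\ref{l2.1}.

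\textit{Main obstacle.} None of the three steps is conceptually hard; the real work is the bookkeeping — carrying the precise power of $\LHH$ and the precise function space through all eleven cases, matching each $m(\na)$ to the right combination of $\mathcal{I}_j$'s and interpolating among the several admissible right–hand sides. The genuinely delicate regime is the low–frequency slab $D_{42}$, where the heat semigroup gives no useful decay at all, so that every $\LHH^{-1/2}$ must be extracted either from the algebraic vanishing $\G_{2,4}$ of $\widehat{M_3}$ at $\xi=0$ or from the two–dimensional measure of $D_{42}$, through the dyadic sums in Lemma~\ref{l4.2}; one must verify that these sums still converge with the claimed exponent (and, for the fastest rates such as $\LHH^{-5/2}$ in item~7), that the time–dependent size of the symbol is genuinely integrated in $\xi$ rather than merely taken in $L^\infty$). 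A secondary nuisance is that every $\p_x^{\,j}$ sitting in front of $M_3$ (items 4),5),7),8),9),11)) forces a $\|\p_x^{\,j}f\|_{\F L^r}$–type term into the estimate, coming from the small–$t$/high–frequency part exactly as in the reduction ``$k=0$ suffices'' inside the proof of Lemma~\ref{l4.1}; one has to confirm it is compatible with the later use of the lemma.
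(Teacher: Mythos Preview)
Your proposal is correct and follows essentially the same route as the paper: split $\widehat{M_3}$ on $D_4$ via Proposition~\ref{p3.1}(4) into the heat piece $e^{-|\xe|^2t/2}$ and the degenerate piece $\G_{2,4}e^{-\G_{2,2}t}$, then feed each multiplier $m(\na)$ into Lemma~\ref{l4.1} (items 2) and 4)) for the heat part and into the appropriate $\mathcal I_j$ of Lemma~\ref{l4.2} for the degenerate part, using $|\xi|\lesssim|\xe|^2$ on $D_4$ to rewrite $m(\xe)\G_{2,4}$ as the right $\G_{p,q}$. Your symbol calculus (e.g.\ $\p_x^2\mathcal R_1\G_{2,4}=\G_{5,5}$ for item~7), $\p_x|\na|\G_{2,4}\lesssim\G_{3,3}$ for item~8)) matches the paper's line by line, and your identification of the $D_{42}$ low-frequency slab as the place where all the decay must come from the algebraic vanishing of $\widehat{M_3}$ is exactly the mechanism the paper exploits.
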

\subsection{Nonlinear decay estimate}
In this subsection, we give some Lemmas devoting to estimating the nonlinear part, the proofs of which are given in the Appendix. Let $\|V\|_3$ be the norm defined in section \ref{intro}, $P_\backsim$, $P_{\thickapprox}$ and $\vec{R'}$ be the operator defined by
\begin{equation}\label{ppp}
P_\backsim f=P_{\langle t\rangle^{-8}\le\cd \le 2\langle t\rangle^{-0.05}}f,
\end{equation}
\begin{equation}\label{Add}
P_{\thickapprox}f=P_{\langle \frac{t}{2}\rangle^{-8}\le\cd\le2\langle \frac{t}{2}\rangle^{-0.05}}f,
\end{equation}
\begin{equation}\label{rrrr}
\mathcal{R}_1'=-\p_x(-\De)^{-1},\ \mathcal{R}_2'=\p_y(-\De)^{-1},\ \vec{R'}=(\mathcal{R}_2',\mathcal{R}_1'),
\end{equation}
 respectively.
\begin{lemma}\label{ne1}
Let $(u,v,b,B)$ be sufficiently  smooth solution
 solving (\ref{mhd}), then
\begin{equation}\label{ine1}
\begin{aligned}
\|\na P_\backsim(\vec{R'}\cd\B) b\|_{L^1}+\|\B b\|_{L^1}\lesssim\ \LHH^{-\frac{1}{2}}\|V\|_3^2,\ \||\p_x|^\beta(bb)\|_{L^1}
\lesssim&\ \LHH^{-\frac{1+\beta}{2}}\|V\|_3^2;\\
\|\p_yv_{<\LHH^{-8}}b\|_{L^1}
+\|\p_yv_{>2\LHH^{-0.05}}b\|_{L^1}+
\|\p_xP_\backsim(\vec{R'}\cd\B)\p_yu\|_{L^1}
\lesssim&\ \LHH^{-\frac{5}{4}}\|V\|_3^2;\\
\||\na|^{0.99}(\U\otimes\U,\B B)\|_{L^1}
\lesssim&\ \LHH^{-0.6}\|V\|_3^2,
\end{aligned}
\end{equation}
where $\F \{|\p_x|^\beta f\}=|\xi|^\beta \widehat{f}$ and $0< \beta\le 1$.
\end{lemma}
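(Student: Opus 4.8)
The plan is to estimate each of the nonlinear products appearing in \eqref{ine1} by freezing one factor in an $L^\infty$ (or $\F L^1$) norm and the other in an $L^2$ norm, then reading off the time decay from the definition of $\|V\|_3$ together with the elementary product inequalities in Lemma~\ref{l2.1}. Recall that $\|fg\|_{L^1}\le\|f\|_{L^2}\|g\|_{L^2}$ and $\||\na|^s(fg)\|_{L^1}\le\||\na|^s f\|_{\F L^1}\|\widehat g\|_{L^1}$-type bounds follow from the product estimate \eqref{kp} together with $\|f\|_{L^\infty}\le\|\widehat f\|_{L^1}$; these will be the two workhorses.

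First I would treat the quadratic terms involving the magnetic field. For $\|\B b\|_{L^1}$ split $\B=(b,B)$: the piece $\|b\,b\|_{L^1}\le\|b\|_{L^2}^2\lesssim\LHH^{-1/2}\|V\|_3^2$ using $\LHH^{1/4}\|b\|_{L^2}\le\|V\|_3$, and the piece $\|B\,b\|_{L^1}\le\|B\|_{L^2}\|b\|_{L^2}\lesssim\LHH^{-1/2-1/4}\|V\|_3^2$, which is even better; similarly $\||\p_x|^\beta(bb)\|_{L^1}$ is handled by \eqref{kp} with $p=1$, $p_1=r_1=p_2=r_2=2$, giving $\||\p_x|^\beta b\|_{L^2}\|b\|_{L^2}$, and since $\||\p_x|^\beta b\|_{L^2}\lesssim\|\p_x b\|_{L^2}^\beta\|b\|_{L^2}^{1-\beta}$ by interpolation together with $\LHH^{3/4}\|\p_x b\|_{H^1}\le\|V\|_3$ one gets $\LHH^{-(1+2\beta)/4-(1-\beta)/4}=\LHH^{-(1+\beta)/2}$ as claimed. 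For $\|\na P_\backsim(\vec{R'}\cd\B)b\|_{L^1}$ I would use that $P_\backsim$ is a frequency projection onto $\langle t\rangle^{-8}\le|\xe|\le2\langle t\rangle^{-0.05}$, so $\|\na P_\backsim(\vec R'\cd\B)\|_{L^\infty}\lesssim\|\na P_\backsim(\vec R'\cd\B)\|_{\F L^1}$ and the extra derivative hitting the low-frequency cutoff costs only $\LHH^{-0.05}$ at worst while it gains a full power through the Riesz-multiplier structure of $\vec R'$ applied to $\B$; pairing with $\|b\|_{L^2}\lesssim\LHH^{-1/4}$ and the $\F L^1$-decay of $\langle\na\rangle^{-1}b$-type quantities recorded in $\|V\|_3$ (the $\LHH^{1/2}\||\na|^{-1}\langle\na\rangle b\|_{\F L^1}$ term) yields $\LHH^{-1/2}$. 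The third line is analogous: $\p_y v$ localized to very low or very high frequencies decays fast because of the $\LHH^{3/4}\|\p_y u\|_{L^2}$ term together with the $H^N$ bound on $v$, and one balances $\|(\cdot)\|_{L^2}\|b\|_{L^2}$ or $\|(\cdot)\|_{\F L^1}\|\p_y u\|_{\F L^1}$ using $\LHH^{5/4}\|\p_x u\|_{\F L^1}\le\|V\|_3$ to land on $\LHH^{-5/4}$.

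For the last line, $\||\na|^{0.99}(\U\otimes\U,\B B)\|_{L^1}$, I would again use \eqref{kp} with $p=1$, $s=0.99$, splitting the derivative onto one factor: $\||\na|^{0.99}(\U\otimes\U)\|_{L^1}\lesssim\||\na|^{0.99}\U\|_{L^2}\|\U\|_{L^2}$, then interpolate $\||\na|^{0.99}\U\|_{L^2}\lesssim\|\U\|_{L^2}^{0.01}\|\na\U\|_{L^2}^{0.99}$; here $\|\U\|_{L^2}\lesssim\LHH^{-1/2}$ but $\na\U$ has no decay built into $\|V\|_3$ beyond boundedness in $H^N$, so the cleaner route is to write $\||\na|^{0.99}\U\|_{L^2}\lesssim\|\p_x\U\|_{H^2}^{\theta}\|\U\|_{L^2}^{1-\theta}$ exploiting that $\p_x$-derivatives decay like $\LHH^{-1}$ — but since $|\na|^{0.99}$ is not a pure $\p_x$ derivative one must instead accept $\||\na|^{0.99}\U\|_{L^2}\lesssim\LHH^{-1/2\cdot 0.01}$ roughly, i.e.\ essentially $\LHH^{-0.005}$-ish times the $H^1$ bound, giving the product $\LHH^{-1/2-0.1}$ which comfortably beats $\LHH^{-0.6}$; the $\B B$ piece is handled the same way with $\|B\|_{L^2}\lesssim\LHH^{-1/2}$ and $\|b\|_{L^2}\lesssim\LHH^{-1/4}$ doing even better.

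\emph{Main obstacle.} The delicate point, and where I expect the real work to be, is the $P_\backsim$-projected term $\|\na P_\backsim(\vec R'\cd\B)b\|_{L^1}$: one must verify that the commutator-type loss from the time-dependent Littlewood--Paley cutoff $P_{\langle t\rangle^{-8}\le\cd\le2\langle t\rangle^{-0.05}}$ is genuinely harmless, i.e.\ that restricting $\vec R'\cd\B$ to frequencies $\gtrsim\langle t\rangle^{-8}$ does not destroy the gain in the low-frequency endpoint (where $\vec R'$ is singular) and restricting to frequencies $\lesssim\langle t\rangle^{-0.05}$ together with the extra $\na$ does not destroy the high-frequency decay. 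Making this precise requires decomposing dyadically over the annulus $\langle t\rangle^{-8}\le N\le2\langle t\rangle^{-0.05}$, using Bernstein to convert the $\na$ and the Riesz multiplier into powers of $N$, and summing the resulting geometric-type series — the endpoints of the annulus are chosen exactly so that the sum converges with the stated $\LHH^{-1/2}$ rate, and checking this bookkeeping carefully against the $\F L^1$-norms recorded in $\|V\|_3$ is the crux of the lemma.
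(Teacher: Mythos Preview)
Your overall plan---H\"older plus product/interpolation---is the right one and matches the paper, but you have misidentified where the work lies and this creates a genuine gap.

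The term you flag as the ``main obstacle,'' $\|\na P_\backsim(\vec{R'}\cd\B)b\|_{L^1}$, is in fact trivial: $\vec{R'}=(\mathcal{R}_2',\mathcal{R}_1')$ has symbol of order $-1$, so $\na\vec{R'}$ is an order-zero Fourier multiplier and $P_\backsim$ is harmless, giving immediately $\|\na P_\backsim(\vec{R'}\cd\B)\|_{L^2}\lesssim\|\B\|_{L^2}\lesssim\LHH^{-1/4}\|V\|_3$. Pairing with $\|b\|_{L^2}\lesssim\LHH^{-1/4}\|V\|_3$ finishes it---no dyadic decomposition, no summation over the annulus, no $\F L^1$-bookkeeping. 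The same observation handles $\|\p_xP_\backsim(\vec{R'}\cd\B)\p_yu\|_{L^1}$ in the second line: $\|\p_xP_\backsim(\vec{R'}\cd\B)\|_{L^2}\lesssim\|B\|_{L^2}\lesssim\LHH^{-1/2}$, pair with $\|\p_yu\|_{L^2}\lesssim\LHH^{-3/4}$.

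The actual gap is in your treatment of $\||\na|^{0.99}(\U\otimes\U)\|_{L^1}$. You assert that ``$\na\U$ has no decay built into $\|V\|_3$ beyond boundedness in $H^N$,'' and then improvise a bound that only reaches roughly $\LHH^{-0.5}$, short of the claimed $\LHH^{-0.6}$. This assertion is false: the norm $\|V\|_3$ contains $\LHH^{3/4}\|\p_yu\|_{L^2}$ and $\LHH\|\p_x\U\|_{H^2}$, and the divergence-free condition $\p_yv=-\p_xu$ gives $\|\p_yv\|_{L^2}=\|\p_xu\|_{L^2}\lesssim\LHH^{-1}$. Hence $\|\na\U\|_{L^2}\lesssim\LHH^{-3/4}\|V\|_3$, and the straight interpolation you wrote down first,
\[
\||\na|^{0.99}(\U\otimes\U)\|_{L^1}\lesssim\|\U\|_{L^2}^{1.01}\|\na\U\|_{L^2}^{0.99}\lesssim\LHH^{-1-0.99/4}\|V\|_3^2,
\]
already gives far better than $\LHH^{-0.6}$. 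The same divergence-free identity is what you need (and do not mention) for the second line: $\|\p_yv_{<\cdot}b\|_{L^1}+\|\p_yv_{>\cdot}b\|_{L^1}\le\|\p_xu\|_{L^2}\|b\|_{L^2}\lesssim\LHH^{-5/4}\|V\|_3^2$, with no frequency splitting actually required.
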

\vskip.1in
\begin{lemma}\label{ne2}
Under the conditions in Lemma \ref{ne1},
let $1\le p\le 2$, then
\begin{equation}\label{ine2}
\begin{aligned}
\|b\p_xb\|_{L^1_y(L^p_x)}
\lesssim&\ \LHH^{-\frac{3}{2}+\frac{1}{2p}}\|V\|_3^2;\\
\|\langle\na\rangle^3(u\p_xb,\B\cd\na u)\|_{L^1_x(L^2_y)}
+\|\langle\na\rangle^3(\p_xP_\backsim(\vec{R'}\cd\B)\p_yu)
\|_{L^1_x(L^2_y)}&\\
\|\langle\na\rangle^2(\p_xP_\backsim u b)\|_{L^1_x(L^2_y)}+\|\langle\na\rangle^2(\p_xP_\backsim(\vec{R'}\cd\B)u)
\|_{L^1_x(L^2_y)}\lesssim&\ \LHH^{-1.01}\|V\|_3^2;\\
\|\langle\na\rangle^3(v_{<\LHH^{-8}}b,v_{>
2\LHH^{-0.05}}b)\|_{L^1_x(L^2_y)}&\\
+\|\langle\na\rangle^3(v_{<\LHH^{-8}}\p_yb,v_{>
2\LHH^{-0.05}}\p_yb)\|_{L^1_x(L^2_y)}&\\
+\|\langle\na\rangle^2(\p_yv_{<\LHH^{-8}}b,\p_yv_{>
2\LHH^{-0.05}}b)\|_{L^1_x(L^2_y)}
\lesssim&\ \LHH^{-1.01}\|V\|_3^2;\\
\|\langle\na\rangle^2(\U\cd\na\U,\B\cd\na\B)\|_{L^1_x(L^2_y)}
\lesssim&\ \LHH^{-0.75}\|V\|_3^2,\\
\|\langle\na\rangle^2(\p_xP_\backsim ub)\|_{L^1_x(L^2_y)}
\lesssim&\ \LHH^{-1.05}\|V\|_3^2.
\end{aligned}
\end{equation}
\end{lemma}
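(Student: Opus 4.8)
The plan is to prove every estimate in (\ref{ine2}) by the same recipe. Each left-hand side is a bilinear expression in the components of $V=(u,v,b,B)$ (sometimes composed with the bounded-away-from-the-origin operators $\vec{R'}$ and with Littlewood--Paley cutoffs), so I would: (i) distribute the $\langle\na\rangle^s$ derivatives onto a single factor using the Kato--Ponce product rule (\ref{kp}); (ii) split the product by Hölder's inequality in the mixed norm, writing $\|fg\|_{L^1_x(L^2_y)}\le\|f\|_{L^2_x(L^q_y)}\|g\|_{L^2_x(L^{q'}_y)}$ and $\|fg\|_{L^1_y(L^p_x)}\le\|f\|_{L^2_y(L^{2p}_x)}\|g\|_{L^2_y(L^{2p}_x)}$; (iii) convert the one-dimensional Lebesgue norms in $x$ (and $y$) into $L^2$ plus $\dot{H}^s$ norms by the one-dimensional Gagliardo--Nirenberg inequality, so that every $\p_x$ falling on $b,B$ or $u$ is paid for with the faster rate recorded in $\|V\|_3$; (iv) insert the decay rates from the definition of $S_1\times S_2$ together with the uniform $H^N$ bound (\ref{resu1}). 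In every block the sum of the decay exponents of the two factors exceeds the target by a definite margin (at least $\LHH^{-1}$ to spare), which is exactly what makes room for the tiny losses $0.01,0.05$ appearing in the statement; the dyadic sums over $M$ then converge because the exponent budget is strictly better than the claimed rate in each piece.

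As a representative case, for $\|b\p_xb\|_{L^1_y(L^p_x)}$ I would use Cauchy--Schwarz in $y$ and Hölder in $x$ to get $\|b\p_xb\|_{L^1_y(L^p_x)}\le\|b\|_{L^2_y(L^{2p}_x)}\|\p_xb\|_{L^2_y(L^{2p}_x)}$, then the one-dimensional embedding $\|g\|_{L^{2p}_x}\lesssim\|g\|_{L^2_x}^{1-\theta}\||\na|g\|_{L^2_x}^{\theta}$ with $\theta=\frac12-\frac1{2p}$; feeding in $\LHH^{\frac14}\|b\|_{L^2}+\LHH^{\frac34}\|\p_xb\|_{H^1}\lesssim\|V\|_3$ produces exactly $\LHH^{-\frac32+\frac1{2p}}\|V\|_3^2$. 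The estimates ending in $L^1_x(L^2_y)$ with weights $\langle\na\rangle^2$ or $\langle\na\rangle^3$ follow the same pattern after (\ref{kp}) moves all derivatives onto one factor, which is then measured in $H^N$ (uniformly bounded by (\ref{resu1}), since $N\ge 8$) while the other factor is measured in one of the decaying norms of $S_1\times S_2$; when an $\F L^1$ factor is more convenient I would pass between $\F L^1$ and $L^1_x(L^2_y)$ using the embedding $\|\widehat f\|_{L^r}\lesssim\|\langle\na\rangle^{\frac2r-\frac12+\epsilon}f\|_{L^1_x(L^2_y)}$ and the algebra bound $\|\widehat{fg}\|_{L^1}\le\|\widehat f\|_{L^1}\|\widehat g\|_{L^1}$ from Lemma \ref{l2.1}.

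The terms carrying the Littlewood--Paley cutoffs $P_\backsim$, $P_{<\LHH^{-8}}$ and $P_{>2\LHH^{-0.05}}$ need two extra devices. First, the divergence-free relations $\p_yv=-\p_xu$ and $\p_yB=-\p_xb$ let me trade every $\p_y$ falling on $v$ or $B$ for a $\p_x$ on $u$ or $b$, whose decay is recorded at the better rates $\LHH^{-1}$ (for $\p_x u$ in $H^2$), $\LHH^{-\frac54}$ (for $\p_x u$ in $\F L^1$) and $\LHH^{-\frac34}$ (for $\p_x b$ in $H^1$); this is what converts the marginal $\p_y v\cdot b$ or $\p_y B\cdot b$ products into something integrable. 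Second, on the very-low band $P_{<\LHH^{-8}}$ I gain positive powers of the cutoff scale by Bernstein's inequality, which yields a factor $\LHH^{-8\sigma}$, $\sigma>0$, swamping any loss; on the band $P_{>2\LHH^{-0.05}}$ I trade frequency for one derivative, $\|P_{>2\LHH^{-0.05}}g\|_X\lesssim\LHH^{0.05}\|\p g\|_X$, so that the mild growth $\LHH^{0.05}$ is absorbed by the surplus decay. Since $\vec{R'}$ is a Fourier multiplier bounded on $\F L^1$ away from the origin, $P_\backsim(\vec{R'}\cdot\B)$ is controlled by $P_\backsim\B$ up to a constant, so these factors carry the $B$- and $b$-norms directly.

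The principal obstacle is not a single hard inequality but the bookkeeping: for each of the roughly ten sub-estimates one must verify that the derivatives can be distributed so that the factor forced to carry $\p_x$ is precisely the one already differentiated in the product, and that the split of mixed norms among the factors is compatible with the one-dimensional Sobolev embeddings in both $x$ and $y$. The genuinely delicate spot is the low-frequency part of $b$, where among the quantities in $\|V\|_3$ only $\||\na|^{-1}\langle\na\rangle b\|_{\F L^1}\lesssim\LHH^{-\frac12}$ is available (there is no $L^2$ or $\F L^1$ control of $b$ itself at that rate); this is exactly why the cutoffs $P_\backsim$, $P_{<\LHH^{-8}}$, $P_{>2\LHH^{-0.05}}$ are inserted, and handling those three sub-bands separately — Bernstein on the lowest, the $|\na|^{-1}\langle\na\rangle$-bound on the middle, and the faster $\p_x$-rates (via the divergence relation) on the rest — closes every estimate. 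Granting Lemmas \ref{l4.1}, \ref{l4.2}, the Bernstein and product lemmas, and the bootstrap bound (\ref{resu1}), no new idea beyond careful Hölder and interpolation is required.
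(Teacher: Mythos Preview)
Your overall strategy (Hölder in mixed norms, one-dimensional Gagliardo--Nirenberg, divergence-free substitutions, Bernstein on the cutoff bands) is the right toolbox, but two concrete steps in the proposal do not close.

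First, your representative computation for $\|b\p_xb\|_{L^1_y(L^p_x)}$ uses the \emph{symmetric} split $\|b\|_{L^{2p}_x}\|\p_xb\|_{L^{2p}_x}$; with $\theta=\tfrac12-\tfrac1{2p}$ this forces $\p_x^2b$ to appear and yields only $\LHH^{-5/4+1/(4p)}$, not $\LHH^{-3/2+1/(2p)}$ (check $p=2$). The paper instead uses the \emph{asymmetric} split $\|b\|_{L^{2p/(2-p)}_x}\|\p_xb\|_{L^2_x}$, putting all the interpolation on the undifferentiated factor; then $\|b\|_{L^2}^{1/p}\|\p_xb\|_{L^2}^{2-1/p}$ gives exactly the stated rate.

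Second, and more seriously, the plan ``measure the derivative-carrying factor in $H^N$ (uniformly bounded) and the other in a decaying norm'' does not reach the targets in (\ref{ine2})$_2$--(\ref{ine2})$_5$. For instance in $\|B\,\p_y\na^3u\|_{L^1_x(L^2_y)}$ you get at best
\[
\|B\|_{L^2_x(L^\infty_y)}\|\p_y\na^3u\|_{L^2}\lesssim \|B\|_{L^2}^{1/2}\|\p_xb\|_{L^2}^{1/2}\,\|u\|_{H^N}\lesssim\LHH^{-5/8}\|V\|_3^2,
\]
far short of $\LHH^{-1.01}$. What the paper actually does is a further \emph{high--low frequency split of the high-derivative factor} at a carefully chosen scale depending on $t$: e.g.\ (\ref{A2}) splits $\na\langle\na\rangle^3u$ at $\LHH^{3/28}$ to extract $\|\na\langle\na\rangle^3u\|_{L^2}\lesssim\LHH^{-3/7}\|V\|_3$, and (\ref{py}) splits $\langle\na\rangle^3b$ at $\LHH^{1/32}$ to get $\LHH^{-5/32}\|V\|_3$. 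The same device is needed in (\ref{ine2})$_3$ on the band $v_{>2\LHH^{-0.05}}$: your $\LHH^{0.05}$ loss plus $\|\na v\|_{L^2}\lesssim\LHH^{-1}$ gives only $\LHH^{-0.975}$ unless you also split $\p_y\na^3b$ (the paper does so at $\LHH^{0.05}$) to gain an extra $\LHH^{-0.05}$. Incorporating these frequency splits---interpolating between the fast-decaying low-order norms in $\|V\|_3$ and the uniform $H^N$ bound---is the missing ingredient; once you add it, your outline goes through.
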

\begin{lemma}\label{ne3}
Under the conditions in Lemma \ref{ne1},
there holds
\begin{equation}\label{ine3}
\begin{aligned}
\|\U\otimes \B\|_{L^2}+\|\B B\|_{L^2}+
\|u\p_xb\|_{L^2}+\|b\p_xb\|_{L^2}+\|\B\cd\na u\|_{L^2}\lesssim&\ \LHH^{-\frac{5}{4}}\|V\|_3^2;\\
\|\U\otimes\U\|_{L^2}
\lesssim\ \LHH^{-\frac{3}{2}}\|V\|_3^2,\ \|\B b\|_{L^2}+\|\p_yP_\backsim(\vec{R'}\cd\B)b\|_{L^2}
\lesssim&\ \LHH^{-\frac{3}{4}}\|V\|_3^2;\\
\|(\U\cd\na\U,\B\cd\na \B)\|_{H^2}+\|(\U\cd\na b,\B\cd\na u)\|_{H^2}
\lesssim&\ \LHH^{-1.1}\|V\|_3^2;\\
\|v_{<\LHH^{-8}}b\|_{H^1}
+\|v_{>2\LHH^{-0.05}}b\|_{H^1}+
\|uB\|_{H^2}+\|bv\|_{H^2}+\|B\B\|_{H^2}&\\
+\|v_{<\LHH^{-8}}\p_yb\|_{L^2}
+\|v_{>2\LHH^{-0.05}}\p_yb\|_{L^2}+\|\p_xP_\backsim(\vec{R'}\cd \B)\p_yb\|_{L^2}&\\
\|\p_xP_\backsim(\vec{R'}\cd\B)b\|_{H^2}
+\|\p_x(\p_yP_\backsim(\vec{R'}\cd\B)b)\|_{H^1}+
\|P_\backsim(\vec{R'}\cd\B)\p_yu\|_{H^1}\lesssim&\ \LHH^{-1.1}\|V\|_3^2;\\
\|\p_x(P_\backsim(\vec{R'}\cd\B)\p_yb)\|_{H^1}+\|P_\backsim(\vec{R'}\cd\B)\p_xb\|_{H^2}
\lesssim&\ \LHH^{-1.01}\|V\|_3^2;\\
\|\p_x(\U\cd\na\U,\B\cd\na\B)\|_{H^2}
\lesssim\ \LHH^{-1}\|V\|_3^2,\ \|P_\backsim(\vec{R'}\cd\B)\p_yu\|_{H^3}
\lesssim&\ \LHH^{-0.9}\|V\|_3^2;\\
\|P_\backsim(\vec{R'}\cd\B)\p_yb\|_{L^2}
\lesssim&\ \LHH^{-0.6}\|V\|_3^2;\\
%\|\langle\na\rangle^2\big[P_{\thickapprox}(\vec{R'}\cd\B)b
%\big](\frac{t}{2})\|\lesssim&\ \LHH^{-0.39}\|V\|_3^2.
\end{aligned}
\end{equation}
\end{lemma}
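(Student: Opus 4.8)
The plan is to prove every inequality in (\ref{ine3}) by one and the same scheme, since they are all bilinear bounds in which a product of two factors, possibly after a derivative or a frequency truncation, is measured in a Sobolev or Lebesgue norm and must decay like $\LHH^{-\alpha}\|V\|_3^2$. For a given term I would first expand each dot product into its scalar components, split the resulting products into two factors, estimate each factor in a suitably chosen Lebesgue or Sobolev norm by H\"older's inequality, move any derivatives onto the factors with the fractional Leibniz rule (\ref{kp}) and the algebra/mixed-norm inequalities of Lemma \ref{l2.1}, and then read off the time decay of each factor from the definition of $\|V\|_3$. The feature that makes the exponents in (\ref{ine3}) affordable is $\|f\|_{L^\infty}\le\|\widehat f\|_{L^1}$: the $\F L^1$ ingredients of $\|V\|_3$ hand us $L^\infty$ decay for free.

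First I would record the building blocks coming from $\|V\|_3<\infty$: uniformly in $t$, $\|\U\|_{L^2}\lesssim\LHH^{-1/2}$, $\|\p_yu\|_{L^2}\lesssim\LHH^{-3/4}$, $\|\p_x\U\|_{H^2}+\|\U\|_{\F L^1}\lesssim\LHH^{-1}$, $\|\p_xu\|_{\F L^1}\lesssim\LHH^{-5/4}$, and on the magnetic side $\|b\|_{L^2}\lesssim\LHH^{-1/4}$, $\|B\|_{L^2}+\||\na|^{-1}\langle\na\rangle b\|_{\F L^1}\lesssim\LHH^{-1/2}$, $\|\p_xb\|_{H^1}\lesssim\LHH^{-3/4}$, $\|\p_xB\|_{L^2}+\|B\|_{\F L^1}+\|\mathcal R_1\langle\na\rangle b\|_{\F L^1}\lesssim\LHH^{-1}$, while $\|V\|_{H^N}$ with $N\ge8$ is merely bounded; moreover ${\rm div}\,\U=0$ gives the extra relation $\|\p_yv\|_{L^2}=\|\p_xu\|_{L^2}\lesssim\LHH^{-1}$. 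Intermediate Sobolev and $L^\infty$ norms will be produced by interpolating $\|\cdot\|_{H^N}$ against whichever of these decaying low-order norms is available; since $N$ is large, the loss in the decay exponent is an arbitrarily small $\delta>0$. For the plain products $\U\otimes\U$, $\U\otimes\B$, $\B B$, $\B b$, $u\p_xb$, $b\p_xb$, $\B\cd\na u$, $\U\cd\na\U$, $\B\cd\na\B$, $\U\cd\na b$ and the like, I would put the factor carrying the fastest $\F L^1$ (hence $L^\infty$) rate in $L^\infty$ and the other in $L^2$ (or in $H^s$ for the $H^2$- and $H^3$-valued estimates), choosing the split so that an $x$-derivative is paired through the $\LHH^{-5/4}$ rate of $\p_xu$ and a $y$-derivative of $u$ through the improved $\LHH^{-3/4}$ rate; e.g. $\|\U\otimes\U\|_{L^2}\le\|\U\|_{L^\infty}\|\U\|_{L^2}\lesssim\LHH^{-3/2}$, $\|\U\otimes\B\|_{L^2}\le\|\U\|_{\F L^1}(\|b\|_{L^2}+\|B\|_{L^2})\lesssim\LHH^{-5/4}$, and $\|\B\cd\na u\|_{L^2}\le\|\p_xu\|_{\F L^1}\|b\|_{L^2}+\|B\|_{\F L^1}\|\p_yu\|_{L^2}\lesssim\LHH^{-3/2}+\LHH^{-7/4}$.

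For the frequency-truncated terms I would decompose $v=v_{<\LHH^{-8}}+P_\backsim v+v_{>2\LHH^{-0.05}}$ (and likewise for the other truncated quantities): the middle piece $P_\backsim v$ is estimated by the sharp unprojected decay; the very-low-frequency tail $v_{<\LHH^{-8}}$ is negligible because Bernstein's inequality converts its frequency support into an arbitrarily large negative power of $\LHH$; and the high-frequency tail $v_{>2\LHH^{-0.05}}$ is controlled by trading $H^N$-regularity, at the cost $\LHH^{0.05(N-1)}$, against a decaying low-order norm, which still leaves the required $\LHH^{-1.1}$ once $N$ is large. For the magnetic-potential-type factors $P_\backsim(\vec{R'}\cd\B)$, on the band $\LHH^{-8}\le|\xe|\le2\LHH^{-0.05}$ the operators $\mathcal R_1'=-\p_x(-\De)^{-1}$ and $\mathcal R_2'=\p_y(-\De)^{-1}$ will be estimated directly through the quantities $\||\na|^{-1}\langle\na\rangle b\|_{\F L^1}$, $\|\mathcal R_1\langle\na\rangle b\|_{\F L^1}$ and $\|B\|_{\F L^1}$ that were built into $\|V\|_3$ for exactly this reason.

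The hard part will not be any single inequality but the bookkeeping: the target exponents $0.99$, $1.01$, $1.05$, $1.1$, $\tfrac54$, $\tfrac32$, etc.\ are essentially sharp, with no slack, so for each term the assignment of factors to norms is forced and must be kept consistent, and the small $\delta>0$'s from Sobolev interpolation and from the $\LHH^{-0.05}$ cutoff must be tracked so they never overwhelm the gains. I expect the most delicate cases to be the terms in (\ref{ine3}) containing $\p_yu$, $\p_yv$ or the factor $\vec{R'}\cd\B$, since those are exactly the ones that force the use of the improved anisotropic rate and of the $|\na|^{-1}\langle\na\rangle$- and $\mathcal R_1\langle\na\rangle$-weighted $\F L^1$ bounds rather than plain energy estimates. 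The complete computations are carried out in the Appendix.
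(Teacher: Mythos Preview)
Your overall strategy---H\"older splits, $\|f\|_{L^\infty}\le\|\widehat f\|_{L^1}$, the product estimate (\ref{kp}), and interpolation of intermediate norms between the decaying pieces of $\|V\|_3$ and the bounded $H^N$ norm via frequency splitting---is exactly the paper's, and your sample computations for $\U\otimes\U$, $\U\otimes\B$, $\B\cd\na u$ are correct.

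There is, however, a genuine error in your treatment of $v_{>2\LHH^{-0.05}}$. You describe it as a high-frequency tail and propose ``trading $H^N$-regularity, at the cost $\LHH^{0.05(N-1)}$,'' with the claim that this still yields $\LHH^{-1.1}$ for $N$ large. But the threshold $2\LHH^{-0.05}$ \emph{shrinks} to $0$ as $t\to\infty$, so $\{|\xe|>2\LHH^{-0.05}\}$ eventually contains every nonzero frequency; Bernstein in your direction gives $\|v_{>2\LHH^{-0.05}}\|_{L^2}\lesssim (2\LHH^{-0.05})^{-(N-1)}\|v\|_{\dot H^{N-1}}\sim\LHH^{+0.05(N-1)}$, a \emph{growing} factor that gets worse, not better, with $N$. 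The mechanism the paper actually uses runs the other way: since $|\xe|^{-1}\lesssim\LHH^{0.05}$ on this set, one promotes $\|v\|_{L^2}\lesssim\LHH^{-1/2}$ to $\|\na v\|_{L^2}\lesssim\LHH^{-1}$ (remember $\p_yv=-\p_xu$) at the small price $\LHH^{0.05}$, e.g.\ $\|v_{>2\LHH^{-0.05}}\|_{L^2}\lesssim\LHH^{0.05}\|\na v\|_{L^2}\lesssim\LHH^{-0.95}$. For the specific inequalities in (\ref{ine3}) this distinction is not always fatal---some of those terms can be closed while ignoring the cutoff---but your stated mechanism is wrong and would fail where the cutoff is genuinely needed.

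A second point you gloss over: several of the $P_\backsim(\vec{R'}\cd\B)$ inequalities rest on the bound $\|P_\backsim(\vec{R'}\cd\B)\|_{\F L^1}\lesssim\LHH^{-1/2+\varepsilon}$. The $b$-contribution is immediate from $\||\na|^{-1}\langle\na\rangle b\|_{\F L^1}$, but the $B$-contribution $\||\na|^{-1}P_\backsim B\|_{\F L^1}$ is not directly controlled by $\|B\|_{\F L^1}$ because of the inverse derivative at low frequency; the paper handles it by a dyadic sum over the band $\LHH^{-8}\le M\le 2\LHH^{-0.05}$, using Bernstein to pass to $\|P_M B\|_{L^2}$ at each scale (see (\ref{PB})). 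You should make this step explicit.
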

\vskip.1in
\begin{lemma}\label{ne4}
Under the conditions in Lemma \ref{ne1},
there holds
\begin{equation}\label{ine4}
\begin{aligned}
\|[\U\otimes\U,\B\otimes\p_x\B,\B B,\p_x(\p_yP_\backsim(\vec{R'}\cd\B)b),\na P_\backsim(\vec{R'}\cd\B)u]\|_{\F L^1}\lesssim&\  \LHH^{-\frac{3}{2}}\|V\|_3^2;\\
\|\p_x(b\p_xu)\|_{\F L^1}+\|\p_y(b\p_xv,v\p_xb)\|_{\F L^1}\lesssim&\ \LHH^{-\frac{5}{4}}\|V\|_3^2;\\
\|P_\backsim(\vec{R'}\cd\B)b\|_{\F L^1}
+\|P_\thickapprox(\vec{R'}\cd\B)b\|_{\F L^1}
\lesssim&\ \LHH^{-0.99}\|V\|_3^2;\\
\|\p_x\p_y(P_\backsim(\vec{R'}\cd
\B)b)\|_{\F L^1}+\| \langle\na\rangle^2(P_\backsim(\vec{R'}\cd
\B)\p_yu)\|_{\F L^1}&\\
\|\langle\na\rangle\p_x(P_\backsim(\vec{R'}\cd
\B)b)\|_{\F L^1}+\|\langle\na\rangle(P_\backsim(\vec{R'}\cd
\B)u)\|_{\F L^1}\lesssim&\ \LHH^{-1.01}\|V\|_3^2;\\
\|\U\cd\na \B\|_{\F L^1}+\|\B\cd\na \U\|_{\F L^1}
+\|\p_x(b\p_xb)\|_{\F L^1}+\|u\p_x\B\|_{\F L^1}\lesssim&\
\LHH^{-1.3}\|V\|_3^2;\\
\|\na (B\B)\|_{\F L^1}+ \|\langle\na\rangle(\U\cd\na \U)\|_{\F L^1}+\|\langle\na\rangle(\U\otimes\U,\B B)\|_{\F L^1}\lesssim&\
\LHH^{-1.2}\|V\|_3^2;\\
 \|\langle\na\rangle^2(\B\cd\na \B)\|_{\F L^1}\lesssim\ \LHH^{-1.1}\|V\|_3^2,\ \|P_\backsim(\vec{R'}\cd
\B)\p_yb\|_{\F L^1}\lesssim&\ \LHH^{-0.9}\|V\|_3^2.
\end{aligned}
\end{equation}
\end{lemma}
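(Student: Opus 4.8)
The plan is to prove every estimate in Lemma~\ref{ne4} by one and the same scheme, since each left-hand side is an $\F L^1$ norm of a bilinear expression in $(u,v,b,B)$ preceded at most by a Fourier multiplier ($\p_x$, $\p_y$, $|\na|^s$, $\mathcal{R}_1$, $\mathcal{R}_{11}$, or $\vec R'$) and/or by one of the frequency truncations $P_\backsim$, $P_\thickapprox$, $P_{<\LHH^{-8}}$, $P_{>2\LHH^{-0.05}}$. First I would reduce each such quantity, via (i) the algebra inequality $\|fg\|_{\F L^1}\le\|f\|_{\F L^1}\|g\|_{\F L^1}$ of Lemma~\ref{l2.1} together with its Bernstein refinement $\|fg\|_{\F L^1}\lesssim R\,\|f\|_{L^2}\|g\|_{\F L^1}$ valid when $\widehat f$ lives in $\{|\xe|\le R\}$, (ii) the embeddings $\|h\|_{\F L^1}\lesssim\|h\|_{H^{1+\delta}}$ and $\|\widehat h\|_{L^1}\lesssim\|\langle\na\rangle^{3/2+\delta}h\|_{L^1_x(L^2_y)}$ from Lemma~\ref{l2.1}, and (iii) the Leibniz rule together with the divergence-free identities $\p_yv=-\p_xu$ and $\p_yB=-\p_xb$, to a product of single-factor decay estimates for the unknowns.

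The preparatory step is to assemble those single-factor rates from the definition of $\|V\|_3$. Read off directly: $\|\U\|_{\F L^1}\lesssim\LHH^{-1}$, $\|\p_xu\|_{\F L^1}\lesssim\LHH^{-5/4}$, $\|\p_x\U\|_{H^2}\lesssim\LHH^{-1}$, $\|B\|_{\F L^1}\lesssim\LHH^{-1}$, $\|\p_xB\|_{L^2}\lesssim\LHH^{-1}$, $\||\na|^{-1}\langle\na\rangle b\|_{\F L^1}\lesssim\LHH^{-1/2}$, $\|\mathcal{R}_1\langle\na\rangle b\|_{\F L^1}\lesssim\LHH^{-1}$. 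Splitting frequencies at $|\xe|\sim1$ and using that on $\{|\xe|\le1\}$ the symbol of $|\na|^{-1}\langle\na\rangle$ dominates those of $1$, $|\na|$ and $|\p_x|$ while on $\{|\xe|>1\}$ it is comparable to $\langle\na\rangle$ (and interpolating the high-frequency remainders against the bounded $H^N$ norm, at the cost of a harmless $\LHH^{\delta}$), I would then derive $\|b\|_{\F L^1}\lesssim\LHH^{-1/2}$, $\|\p_xb\|_{\F L^1}\lesssim\LHH^{-1}$, $\|\p_yb\|_{\F L^1}+\||\na|b\|_{\F L^1}\lesssim\LHH^{-1/4+\delta}$, and analogues with extra derivatives. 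For the truncations: the cutoff at $\LHH^{-8}$ keeps the $|\na|^{-1}$ inside $\vec R'$ from costing more than a logarithm, the cutoff at $2\LHH^{-0.05}$ yields a genuine gain $\|g\|_{\F L^1}\lesssim\LHH^{-0.05}\|g\|_{L^2}$ for $g$ supported there, and --- most importantly --- the stream-function identity $\vec R'\cd\B=(-\De)^{-1}(\p_yb-\p_xB)$ gives $\p_y(\vec R'\cd\B)=\pm b$ and $\p_x(\vec R'\cd\B)=\pm B$, so that any derivative hitting a truncated potential simply returns $b$ or $B$.

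With this toolbox in hand the list is worked through term by term. Most items are one line, e.g. $\|\U\otimes\U\|_{\F L^1}\le\|\U\|_{\F L^1}^2\lesssim\LHH^{-2}$ and $\|\B\otimes\p_x\B\|_{\F L^1}\lesssim(\|b\|_{\F L^1}+\|B\|_{\F L^1})(\|\p_xb\|_{\F L^1}+\|\p_xB\|_{\F L^1})\lesssim\LHH^{-3/2}$; the potential terms collapse after using $\p_{x,y}(\vec R'\cd\B)=\pm(B,b)$ and the $\LHH^{-0.05}$ gain; and terms carrying an explicit $\p_x$, such as $\|\p_x(b\p_xu)\|_{\F L^1}$ or $\|\p_x(b\p_xb)\|_{\F L^1}$, are expanded by Leibniz so that the faster-decaying $\p_xb$ ($\LHH^{-1}$) and $\p_xu$ ($\LHH^{-5/4}$) do the work, while $\|\p_y(b\p_xv,v\p_xb)\|_{\F L^1}$ is treated by first trading the $y$-derivative via $\p_yv=-\p_xu$.

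The hard part will be the bookkeeping for the few \emph{borderline} terms --- those whose stated rate is within $\delta$ of the crudest bilinear bound. These are the items containing $\p_yb$ (the slowest first derivative of the unknowns, only $\LHH^{-1/4+\delta}$ in $\F L^1$, since $b$ carries no dissipation) and those containing $P_\backsim(\vec R'\cd\B)$ with exponents near $1$ (the $\LHH^{-0.9}$, $\LHH^{-0.99}$ and $\LHH^{-1.01}$ bounds). For each of these I would have to extract every available factor: route $y$-derivatives through $\p_yv=-\p_xu$ and $\p_yB=-\p_xb$ whenever they appear, invoke the stream-function identity to replace $\vec R'\cd\B$ by $b$ or $B$ the instant a derivative lands on it, use the $\LHH^{-0.05}$ smallness coming from the upper cutoff of $P_\backsim$ (and the $\LHH^{-8}$ lower cutoff to tame $|\na|^{-1}$), and take $N$ large enough that the $\LHH^{\delta}$ losses from $H^N$-interpolation fit inside the slack of the stated exponent. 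Making those near-critical estimates close --- not the routine bilinear algebra --- is where the substance of the proof sits, and it is carried out in the Appendix.
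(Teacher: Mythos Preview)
Your overall scheme coincides with the paper's: reduce each bilinear $\F L^1$ norm via the algebra inequality $\|fg\|_{\F L^1}\le\|\widehat f\|_{L^1}\|\widehat g\|_{L^1}$ to a product of single-factor rates, and manufacture those rates by splitting at a $t$-dependent frequency threshold, balancing the low-frequency decay built into $\|V\|_3$ against the bounded $H^N$ norm on high frequencies. The stream-function identity $\vec R'\!\cdot\B=-\psi$ (so $\p_x(\vec R'\!\cdot\B)=B$, $\p_y(\vec R'\!\cdot\B)=-b$) and the dyadic treatment of $P_\backsim$ are also exactly what the paper does.

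There is, however, one quantitative slip that would break the argument. The rate you quote, $\|\p_yb\|_{\F L^1}+\||\na|b\|_{\F L^1}\lesssim\LHH^{-1/4+\delta}$, is too weak: with it the term $\|\p_yb\,\p_xv\|_{\F L^1}$ in \eqref{ine4}$_2$ gives only $\LHH^{-1.1}$, and the last estimate in \eqref{ine4}$_7$ fails outright. The point is that the low-frequency anchor for $\p_yb$ must be the $\F L^1$ rate $\|b\|_{\F L^1}\lesssim\LHH^{-1/2}$, not any $L^2$ rate. Splitting at $R=\LHH^{5/69}$ gives $\|\langle\na\rangle b_{<R}\|_{\F L^1}\lesssim R\,\|b\|_{\F L^1}\lesssim\LHH^{-0.43}$ and $\|\langle\na\rangle b_{\ge R}\|_{\F L^1}\lesssim R^{-(N-2-\epsilon)}\|b\|_{H^N}\lesssim\LHH^{-0.43}$; this is the paper's \eqref{ub}$_2$, and analogous splits yield $\|\p_xv\|_{\F L^1}\lesssim\LHH^{-0.85}$, $\|\langle\na\rangle\p_xu\|_{\F L^1}\lesssim\LHH^{-0.75}$, which are what actually close \eqref{ine4}$_2$.

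For the $P_\backsim$ terms, the decisive estimate is $\|P_\backsim(\vec R'\!\cdot\B)\|_{\F L^1}\lesssim\LHH^{-0.492}$ (the paper's \eqref{PB}). It is obtained not from the cutoff sizes as such but by splitting $\vec R'\!\cdot\B=\mathcal R_2' b+\mathcal R_1' B$: the $b$-piece is bounded directly by $\||\na|^{-1}\langle\na\rangle b\|_{\F L^1}\lesssim\LHH^{-1/2}$ with no truncation, while for the $B$-piece one uses that in each dyadic shell the $M^{-1}$ from $|\na|^{-1}$ exactly cancels the Bernstein factor $M$, leaving $\sum_{\LHH^{-8}\le M\le 2\LHH^{-0.05}}\|P_MB\|_{L^2}\lesssim\LHH^{0.008}\|B\|_{L^2}\lesssim\LHH^{-0.492}$. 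The fast rate $\|B\|_{L^2}\lesssim\LHH^{-1/2}$ (as opposed to $\|\B\|_{L^2}\lesssim\LHH^{-1/4}$) is what makes the $\LHH^{-0.99}$ and $\LHH^{-0.9}$ bounds go through; your description via the ``$\LHH^{-0.05}$ gain from the upper cutoff'' misses this mechanism.
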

%\vskip.1in
\begin{rem}\label{r11}
We do not  focus on the optimal decay rate for the nonlinear terms like (\ref{ine2})$_2$, since it is sufficient to help us achieve the final goal in the present paper.
\end{rem}
\section{Energy estimate in $H^N$}
\label{s4}
\vskip.1in
In this section, we show the following energy  estimate of  solutions:
\begin{equation}\label{H8}
\begin{aligned}
&\ \|\vec{u}(t)\|_{H^N}^2+\|\vec{b}(t)\|_{H^N}^2+\|\na \U\|_{L^2_t(H^N)}^2
+\|\p_x \B\|_{L^2_t(H^{N-1})}^2\\
\lesssim&\ \|V_0\|_3^2+\|V\|_3^4+\|V\|_3^3,\ \forall\ t>0.
\end{aligned}
\end{equation}
Using
$$(\U\cd\na \U|\U)
=(\B\cd\na \B|\U)+(\B\cd\na \U|\B)=(\U\cd\na \B|\B)=0,$$
 we get the  $L^2$ energy estimate:
\begin{equation}\label{L2}
\ddt(\|\vec{u}\|_{L^2}^2+\|\vec{b}\|_{L^2}^2)+\|\na \vec{u}\|_{L^2}^2=0.
\end{equation}
The $\dot{H}^N$ estimate  of the solution reads:
\begin{equation}\label{H7}
\begin{aligned}
  \ddt(\|\vec{u}\|_{\dot{H}^N}^2+\|\vec{b}\|_{\dot{H}^N}^2)
  +\|\na \vec{u}\|_{\dot{H}^N}^2
  =&-(\vec{u}\cdot\na \vec{u}|\vec{u})_{\dot{H}^N}+(\vec{b}\cd\na \vec{b}|\vec{u})_{\dot{H}^N}\\
  &+(\vec{b}\cd\na \vec{u}|\vec{b})_{\dot{H}^N}-(\vec{u}\cd\na \vec{b}|\vec{b})_{\dot{H}^N}\\
  =&I_1+I_2+I_3+I_4.
\end{aligned}
\end{equation}
From (\ref{mhd})$_1$, using $(\na p|\p_x\B)=0$,
we have
$$(\p_x\vec{b}|\p_x\vec{b})_{H^{N-1}}
=-(\De\vec{u}|\p_x\vec{b})_{H^{N-1}}
-(\vec{b}\cd\na \vec{b}|\p_x\vec{b})_{H^{N-1}}
+(\U\cd\na\U|\p_x\B)_{H^{N-1}}+(\p_t\U|\p_x\B)_{H^{N-1}}.$$
Using the formation of (\ref{mhd})$_2$ and integration by parts, we can get
$$
\begin{aligned}
(\p_t\U|\p_x\B)_{H^{N-1}}
=&\ddt(\U|\p_x\B)_{H^{N-1}}+(\p_x\U|\p_t\B)_{H^{N-1}}\\
=&\ddt(\U|\p_x\B)_{H^{N-1}}+\|\p_x\U\|_{H^{N-1}}^2-(\p_x\U|\U\cd\na \B)_{H^{N-1}}+(\p_x\U|\B\cd\na \U)_{H^{N-1}}.
\end{aligned}
$$
So
$$
\begin{aligned}
\|\p_x\B\|_{H^{N-1}}^2
=&-(\De\U|\p_x\B)_{H^{N-1}}
-(\vec{b}\cd\na \vec{b}|\p_x\vec{b})_{H^{N-1}}
+(\U\cd\na\U|\p_x\B)_{H^{N-1}}\\
&+\ddt(\U|\p_x\B)_{H^{N-1}}+\|\p_x\U\|_{H^{N-1}}^2-(\p_x\U|\U\cd\na \B)_{H^{N-1}}+(\p_x\U|\B\cd\na \U)_{H^{N-1}},
\end{aligned}
$$
together with the application of Young's inequality
$$|(\De\U|\p_x\B)_{H^{N-1}}|\le \frac{1}{2}\|\p_x\B\|_{H^{N-1}}^2+\frac{1}{2}\|\De\U\|_{H^{N-1}}^2,$$
yields
\begin{equation}\label{couple}
\begin{aligned}
&\ \ -\ddt(\U|\p_x\B)_{H^{N-1}}+\frac{1}{2}\|\p_x\B\|_{H^{N-1}}^2-
\|\p_x\U\|_{H^{N-1}}^2\\
\le&\ \frac{1}{2}\|\De\U\|_{H^{N-1}}^2
+|(\vec{b}\cd\na \vec{b}|\p_x\vec{b})_{H^{N-1}}|
+|(\U\cd\na\U|\p_x\B)_{H^{N-1}}|\\
&+|(\p_x\U|\U\cd\na \B)_{H^{N-1}}|+|(\p_x\U|\B\cd\na \U)_{H^{N-1}}|\\
=&\ \frac{1}{2}\|\De\U\|_{H^{N-1}}^2+I_5+I_6+I_7+I_8.
\end{aligned}
\end{equation}
Multiplying (\ref{couple}) by $\frac{1}{4}$, and adding the resulting inequality, (\ref{L2}) and (\ref{H7}) together, we have
$$
\begin{aligned}
\ddt\Big(\|\vec{u}\|_{H^N}^2+\|\vec{b}\|_{H^N}^2
-\frac{1}{4}(\U|\p_x\B)_{H^{N-1}}\Big)+\frac{1}{2}\|\na \vec{u}\|_{H^N}^2
+\frac{1}{8}\|\p_x\B\|_{H^{N-1}}^2
\le \sum_{i=1}^4I_i+\frac{1}{4}\sum_{i=5}^8I_i.
  \end{aligned}
  $$
  Using product estimate and Young's inequality, we have
  $$
\begin{aligned}
I_1\le&\ \|\vec{u}\cd\na \vec{u}\|_{\dot{H}^N}
\|\vec{u}\|_{\dot{H}^N}\lesssim\ \|\U\|_{H^N}\|\na \U\|_{H^N}
\|\na \U\|_{H^{N-1}}\\
\le&\ C\|\U\|_{H^N}^2\|\na \U\|_{H^{N-1}}^2+0.001\|\na \U\|_{H^N}^2,\\
I_6\le&\ \|\vec{u}\cd\na \vec{u}\|_{H^{N-1}}
\|\p_x\vec{b}\|_{H^{N-1}}\lesssim\ \|\vec{u}\|_{H^{N-1}}\|\na \vec{u}\|_{H^{N-1}}\|\p_x\vec{b}\|_{H^{N-1}}\\
\le& C \|\vec{u}\|_{H^{N-1}}^2\| \na\vec{u}\|_{H^{N-1}}^2+0.001\|\p_x \B\|_{H^{N-1}}^2,\\
I_8\le& \ \|\B\cd\na\U\|_{H^{N-1}}\|\p_x\U\|_{H^{N-1}}\lesssim\
\|\B\|_{H^{N-1}}\|\na \U\|_{H^{N-1}}\|\p_x\U\|_{H^{N-1}}\\
\le&\ C\|\B\|_{H^{N-1}}^2\|\p_x\U\|_{H^{N-1}}^2+0.001\|\na \U\|_{H^{N-1}}^2.
\end{aligned}
$$
Since
$$
\begin{aligned}
\|\p^{N-1}(B\p_y\B)\|_{L^2}
\lesssim&\ \|\p_x^{N-1}(B\p_y\B)\|_{L^2}+\|\p_y^{N-1}(B\p_y\B)\|_{L^2}\\
\lesssim&\ \big\| \|B\|_{L^\infty_x}\|\p_y\p_x^{N-1}\B\|_{L^2_x}
+\|\p_y\B\|_{L^\infty_x}\|\p_x^{N-1}B\|_{L^2_x}\|\big\|_{L^2_y}\\
&+\big\| \|B\|_{L^\infty_y}\|\p_y^N\B\|_{L^2_y}
+\|\p_y\B\|_{L^2_y}\|\p_y^{N-1}B\|_{L^\infty_y}\|\big\|_{L^2_x}\\
\lesssim&\ \|\B\|_{H^3}\|\p_x\B\|_{H^{N-1}}+\|B\|_{L^\infty}\|\B\|_{H^N}\ (\p_yB=-\p_xb),
\end{aligned}
$$
together with $\|B\p_y\B\|_{L^2}\le\|B\|_{L^\infty}\|\B\|_{H^1}$ yields
\begin{equation}\label{f1}
\|B\p_y\B\|_{H^{N-1}}\lesssim\ \|\B\|_{H^3}\|\p_x\B\|_{H^{N-1}}+\|B\|_{L^\infty}\|\B\|_{H^N}.
\end{equation}
Thanks to (\ref{f1}), we have
$$
\begin{aligned}
I_5\le&\ (\|b\p_x\B\|_{H^{N-1}}+\|B\p_y\B\|_{H^{N-1}})\|\p_x\B\|_{H^{N-1}}\\
\lesssim&\ (\|\B\|_{H^{N-1}}\|\p_x\B\|_{H^{N-1}}+\|B\|_{L^\infty}\|\B\|_{H^N})
\|\p_x\B\|_{H^{N-1}}\\
\le&\ C(\|\B\|_{H^{N-1}}^2\|\p_x\B\|_{H^{N-1}}^2+\|B\|_{L^\infty}^2\|\B\|_{H^N}^2)
+0.001\|\p_x\B\|_{H^{N-1}}^2
\end{aligned}
$$
Following the arguments yielding (\ref{f1}), we can also get
$$\|v\p_y\B\|_{H^{N-1}}\lesssim\ \|\B\|_{H^3}\|\p_x\U\|_{H^{N-1}}+\|v\|_{L^\infty}\|\B\|_{H^N},$$
which deduce that
$$
\begin{aligned}
I_7\le&\ (\|u\p_x\B\|_{H^{N-1}}+\|v\p_y\B\|_{H^{N-1}})\|\p_x\U\|_{H^{N-1}}\\
\lesssim&\ (\|\U\|_{H^{N-1}}\|\p_x\B\|_{H^{N-1}}+
\|v\|_{L^\infty}\|\B\|_{H^N}+\|\B\|_{H^3}\|\p_x \U\|_{H^{N-1}})\|\p_x\U\|_{H^{N-1}}\\
\le&\ C(\|\U\|_{H^{N-1}}^2\|\p_x\B\|_{H^{N-1}}^2+\|\B\|_{H^3}^2\|\p_x \U\|_{H^{N-1}}^2+\|v\|_{L^\infty}^2\|\B\|_{H^N}^2)
+0.001\|\p_x\U\|_{H^{N-1}}^2.
\end{aligned}
$$
For $I_4$, we have
$$
\begin{aligned}
I_4=&-\int\  \p^N(\U\cd\na \B)\cd\p^N \B dxdy
=-\sum_{\iota+\kappa=N,0\le \kappa\le {N-1}}C_N^\kappa\int\ \p^\iota\U\cd\na \p^\kappa \B\cd\p^N \B dxdy\\
=&-\sum_{\iota+\kappa=N,0\le \kappa\le {N-1}}C_N^\kappa\int\ \p^\iota u\p_x \p^\kappa \B\cd\p^N \B dxdy-\sum_{\iota+\kappa=N,0\le \kappa\le {N-1}}C_N^\kappa\int\ \p^\iota v\p_y \p^\kappa \B\cd\p^N \B dxdy\\
\end{aligned}
$$
The first term can be bounded by
\begin{equation}\label{cha1}
C\|\na u\|_{H^N}\|\p_x\B\|_{H^{N-1}}\|\B\|_{H^N}\le\
C\|\B\|_{H^N}^2\|\na u\|_{H^N}^2+0.001\|\p_x\B\|_{H^{N-1}}^2.
\end{equation}
For the second integral, we consider it as two types based on that whether $\p^N$ contains $\p_x$. If $\p^N=\p_x^i\p_y^j$, $i+j=N$ and $i\ge 1$, we can bound this case by
$$C\|\na v\|_{H^N}\|\B\|_{H^N}\|\p_x\B\|_{H^{N-1}}\le\
C\|\na v\|_{H^N}^2\|\B\|_{H^N}^2+0.001\|\p_x\B\|_{H^{N-1}}^2.$$
Otherwise, we need to estimate  $\int \p_y^\iota v\ \p_y^{\kappa+1}\B \cd \p_y^N\B dxdy$. In fact, when $0\le \kappa\le N-2$, using $\p_yv=-\p_x u$ and integrating by parts twice, we have
$$
\begin{aligned}
&\ \ \int \p_y^\iota v\ \p_y^{\kappa+1}\B \cd\ \p_y^N\B dxdy\\
=&-\int \p_y^{\iota-1} \p_x u\ \p_y^{\kappa+1}\B\cd \p_y^N\B dxdy\\
=&\int \p_y^{\iota-1}  u\ \p_x\p_y^{\kappa+1}\B\cd\p_y^N\B dxdy
+\int \p_y^{\iota-1}  u\ \p_y^{\kappa+1}\B\cd\p_x\p_y^N\B dxdy\\
=&\int \p_y^{\iota-1}  u\ \p_x\p_y^{\kappa+1}\B\cd\p_y^N\B dxdy
-\int \p_y^{\iota}  u\ \p_y^{\kappa+1}\B\cd\p_x\p_y^{N-1}\B dxdy\\
&-\int \p_y^{\iota-1}  u\ \p_y^{\kappa+2}\B\cd\p_x\p_y^{N-1}\B dxdy\\
\lesssim&\ C\|\na \U\|_{H^N}\|\B\|_{H^N}\|\p_x\B\|_{H^{N-1}}\\
\le&\ C\|\B\|_{H^N}^2\|\na \U\|_{H^N}^2+0.001\|\p_x\B\|_{H^{N-1}}^2.
\end{aligned}
$$
When $\kappa={N-1}$, we can bound this integral by $ \|\p_xu\|_{L^\infty}\|\B\|_{H^N}^2$. Hence, we have
$$I_4\le\ C(\|\B\|_{H^N}^2\|\na \U\|_{H^N}^2+\|\p_xu\|_{L^\infty}\|\B\|_{H^N}^2)+0.01\|\p_x\B\|_{H^{N-1}}^2.$$
At last, we bound $I_2+I_3$. Applying the cancelation property
$$\int \vec{b}\cd\na \p^N\vec{u}\cd\p^N \vec{b}dx+\int \vec{b}\cd\na \p^N\vec{b}\cd\p^N \vec{u}dx=0,$$
we have
$$
I_2+I_3=\ \sum_{\iota+\kappa=N,0\le \kappa\le{N-1}}C_N^\kappa
\int \p^\iota \vec{b}\cd\na\p^\kappa \vec{u}
\cdot\p^N\vec{b}dxdy+\sum_{\iota+\kappa=N,0\le \kappa\le{N-1}}C_N^\kappa
\int \p^\iota \vec{b}\cd\na\p^\kappa \vec{b}
\cdot\p^N\vec{u}dxdy.
$$
By a similar analysis of $I_4$, we only need to bound the  integral:
$$\digamma_1=\int \p_y^\iota b\  \p_x\p_y^\kappa u \p_y^Nb dxdy,$$
since other cases can be bounded by the left hand side of (\ref{cha1}).
When $1\le\kappa\le{N-1}$, integrating by parts twice, we have
$$
\begin{aligned}
\digamma_1
=&-\int \p_x\p_y^\iota b\  \p_y^\kappa u \p_y^Nb dxdy
-\int \p_y^\iota b\  \p_y^\kappa u \p_x\p_y^Nb dxdy\\
=&-\int \p_x\p_y^\iota b\  \p_y^\kappa u\p_y^Nb dxdy
+\int \p_y^{\iota+1} b\  \p_y^\kappa u \p_x\p_y^{N-1}b dxdy\\
&+\int \p_y^{\iota} b\  \p_y^{\kappa+1} u \p_x\p_y^{N-1}b dxdy
\lesssim\ \|\B\|_{H^N}\|\na \U\|_{H^N}\|\p_x\B\|_{H^{N-1}}\\
\le&\ C \|\B\|_{H^N}^2\|\na \U\|_{H^N}^2+0.001\|\p_x\B\|_{H^{N-1}}^2.
\end{aligned}
$$
When $\kappa=0$, we have
$$\digamma_1\lesssim\ \|\p_xu\|_{L^\infty}\|\B\|_{H^N}^2.$$
So
$$I_2+I_3\lesssim\ C (\|\p_xu\|_{L^\infty}+\|\na \U\|_{H^N}^2)\|\B\|_{H^N}^2+0.01\|\p_x\B\|_{H^{N-1}}^2.$$
Collecting the above estimates of $I_i$ $i=\overline{1,8}$, we can get
$$
\begin{aligned}
&\ \ \ddt\Big(\|\vec{u}\|_{H^N}^2+\|\vec{b}\|_{H^N}^2
-\frac{1}{4}(\U|\p_x\B)_{H^{N-1}}\Big)+\frac{1}{2}\|\na \vec{u}\|_{H^N}^2
+0.05\|\p_x\B\|_{H^{N-1}}^2\\
\lesssim & (\|\U\|_{H^N}^2+\|\B\|_{H^N}^2)(\|\na \U\|_{H^N}^2
+\|\p_x\B\|_{H^{N-1}}^2+\|v\|_{L^\infty}^2+\|B\|_{L^\infty}^2+\|\p_xu
\|_{L^\infty}).
  \end{aligned}
  $$
Integrating in time, using
$$
\|\vec{u}\|_{H^N}^2+\|\vec{b}\|_{H^N}^2
-\frac{1}{4}(\U|\p_x\B)_{H^{N-1}}\thickapprox \|\vec{u}\|_{H^N}^2+\|\vec{b}\|_{H^N}^2
$$
and
$$
\int_0^t(\|v\|_{L^\infty}^2+\|B\|_{L^\infty}^2+\|\p_xu
\|_{L^\infty})d\tau
\le\ \int_0^t \langle \tau\rangle^{-\frac{5}{4}}d\tau (\|V\|_3^2+\|V\|_3)
\le\ C (\|V\|_3^2+\|V\|_3),
$$
we conclude the proof of (\ref{H8}).
\section{The  estimates on $v$}
\label{sv}
\vskip.1in
In this section, we shall prove
\begin{equation}\label{4.1}
\left\{
\begin{aligned}
\|v(t)\|_{L^2}\lesssim\ &\langle t\rangle^{-\frac{1}{2}}
\big(\|V_0\|_3+\|V\|_3^2\big);\\
\|\p_x v(t)\|_{H^2}\lesssim\ &\langle t\rangle^{-1}
\big(\|V_0\|_3+\|V\|_3^2\big);\\
\|v(t)\|_{\F L^1}\lesssim\ &\langle t\rangle^{-1}
\big(\|V_0\|_3+\|V\|_3^2\big).
\end{aligned}
\right.
\end{equation}
Thanks  to the Plancherel's identity, let us turn to the  estimate of
$\|v\|_{\F L^2}$, $\|\p_x\langle\na \rangle^2 v\|_{\F L^2}$ and $\|v\|_{\F L^1}$, respectively. By (\ref{U}),
the expression of $v$ can be given by
\begin{equation}\label{eprv}
v(t)=\underbrace{M_{3}(\p,t)v_0+M_{1}(\p,t)B_0}_{L_v}
+\underbrace{\int_0^t M_{3}(\p,t-\tau)F^2d\tau}_{NL_{v1}}
+\underbrace{\int_0^t M_{1}(\p,t-\tau)G^2d\tau}_{NL_{v2}},
\end{equation}
where
\begin{equation}\label{f2}
\begin{aligned}
  F^2=&-\U\cd\na v-\p_y\De^{-1}{\rm div}(\B\cd\na\B-\U\cd\na\U)+\B\cd\na B\\
  =&\mathcal{R}_{12}(\B\cd\na b-\U\cd\na u)-\mathcal{R}_{11}
  (\B\cd\na B-\U\cd\na v),\\
  \end{aligned}
\end{equation}
and
\begin{equation}\label{g2}
G^2
=-\U\cd\na B+\B\cd\na v=-\p_x(uB-bv).
\end{equation}
The nonlinear term $\B\cd\na\B$ can be rewritten as
\begin{equation}\label{bbfen}
\B\cd\na\B=(b\p_x b+B\p_y b,\B\cd\na B)=(2b\p_x b+\p_y(bB),\B\cd\na B)=(\p_x (bb)+\p_y(bB),\B\cd\na B).
\end{equation}
\subsection{The estimate of $(\ref{4.1})_1$}
\label{subv1}
Using (\ref{rf1}) and (\ref{decay1})$_1$ for $k=0$, one can get
$$
\|L_v\|_{\F L^2(D_1)}\lesssim\ \|M_{3}(\p,t)v_0\|_{\F L^2(D_1)}+\|M_{1}(\p,t)B_0\|_{\F L^2(D_1)}
\lesssim\ \langle t\rangle^{-\frac{1}{2}}
(\|v_0\|_{L^1}+\|B_0\|_{L^1}).
$$
 By (\ref{rf1}), (\ref{bbfen}), (\ref{decay1})$_1$ for $k=1$ and $k=1/2$, (\ref{ine3}),  (\ref{ine3}) and (\ref{ine1}),  we  infer
 $$
\begin{aligned}
\sum_{i=1,2}\|NL_{vi}\|_{\F L^2(D_1)}
 \lesssim&\ \int_0^t\|M_3(\p,t-\tau)F^2\|_{\F L^2(D_1)}d\tau
+ \int_0^t\|M_1(\p,t-\tau)G^2\|_{\F L^2(D_1)}d\tau
\\
\lesssim&\ \int_0^t \LH^{-\frac{1}{2}}\||\na|^{-1}\Big(\U\cd\na\U,\p_y(Bb),\B\cd\na B,\U\cd\na\B,\B\cd\na\U\Big)\|_{L^2}d\tau
\\
&+\int_0^t \|M_3(\p,t-\tau)\p_x(bb)\|_{\F L^2(D_1)}d\tau\\
\lesssim&\ \int_0^t \langle t-\tau\rangle^{-\frac{1}{2}}
(\|\U\otimes \U\|_{L^2}+\|\B B\|_{L^2}+\|\U \otimes\B\|_{L^2})d\tau\\
&+\int_0^t \langle t-\tau\rangle^{-\frac{3}{4}}\||\p_x|^\frac{1}{2}(bb)
\|_{L^1}d\tau\\
\lesssim&\ \int_0^t \Big(\langle t-\tau\rangle^{-\frac{1}{2}}
\langle \tau\rangle^{-1.1}+\langle t-\tau\rangle^{-\frac{3}{4}}
\langle \tau\rangle^{-\frac{3}{4}}\Big) d\tau
\|V\|_3^2\\
\lesssim&\ \langle t\rangle^{-\frac{1}{2}}\|V\|_3^2.
 \end{aligned}
 $$
So
\begin{equation}\label{v1}
\|\widehat{v}\|_{L^2(D_1)}
\lesssim\ \langle t\rangle^{-\frac{1}{2}}(\|V_0\|_3+\|V\|_3^2).
\end{equation}
By (\ref{rf2}),(\ref{End3})$_1$, (\ref{End1})$_1$,
%$$v_0=-\mathcal{R}_{11}v_0+\mathcal{R}_{12}u_0,\ %B_0=-\mathcal{R}_{11}B_0+\mathcal{R}_{12}b_0,$$
%which yields
%$$|\widehat{v_0}|
%\le \frac{|\xi|}{|\xe|}|\F\{\U\}|,
%\ |\widehat{B_0}|
%\le \frac{|\xi|}{|\xe|}|\F\{\B\}|,$$
we have
$$
\begin{aligned}
\|L_v\|_{\F L^2(D_4)}
\lesssim&\ \|M_3(\p,t)v_0\|_{\F L^2(D_4)}+\|M_1(\p,t)B_0\|_{\F L^2(D_4)}\\
\lesssim&\ \langle t\rangle^{-\frac{1}{2}}
\|(v_0,B_0)\|_{L^2\cap L^1}.
\end{aligned}
$$
It follows from (\ref{rf2}), (\ref{bbfen}),  (\ref{End3})$_1$ and (\ref{End3})$_4$ for $r=2$ that
$$
\begin{aligned}
\|NL_{v1}\|_{\F L^2(D_4)}
\lesssim&\ \int_0^t\|M_3(\p,t-\tau)F^2\|_{\F L^2(D_4)}d\tau\\
\lesssim&\ \int_0^t\|M_3(\p,t-\tau)
(\U\cd\na\U,\B\cd\na\B)\|_{\F L^2(D_4)}d\tau\\
\lesssim&\ \int_0^t\|M_3(\p,t-\tau)
(\U\cd\na\U,\B \cd\na B, \p_y(\B B))\|_{\F L^2(D_4)}d\tau\\
&+\int_0^t\|\p_xM_3(\p,t-\tau)
(bb)\|_{\F L^2(D_4)}d\tau\\
\lesssim&\ \int_0^t \LH^{-\frac{1}{2}}
\||\na|^{-1}(\U\cd\na\U,\B \cd\na B, \p_y(\B B))\|_{H^1}d\tau\\
&+\int_0^t \LH^{-1}(\|bb\|_{\F L^2}+\|b\p_xb\|_{\F L^2})d\tau\\
\lesssim&\ \int_0^t \LH^{-\frac{1}{2}}
\big(\|(\U\otimes\U,\U\cd\na\U)\|_{L^2}+\|\B B\|_{H^1}\big)d\tau\\
&+\int_0^t \LH^{-1}(\|bb\|_{L^2}+\|b\p_xb\|_{L^2})d\tau\\
\lesssim&\ \int_0^t
\big(\LH^{-\frac{1}{2}}\langle\tau\rangle^{-1.1}
+\LH^{-1}\langle\tau\rangle^{-\frac{3}{4}}\big)d\tau\|V\|_3^2\\
\lesssim&\ \LHH^{-\frac{1}{2}}\|V\|_3^2.
\end{aligned}
$$
Using (\ref{rf2}), (\ref{End1})$_1$ and (\ref{ine3}), we can  deduce
$$
\begin{aligned}
  \|NL_{v2}\|_{\F L^2(D_4)}\lesssim&\
  \int_0^t \|M_1(\p,t-\tau)G^2\|_{L^2(D_4)}d\tau\\
  \lesssim&\ \int_0^t \langle t-\tau\rangle^{-\frac{1}{2}}
  \||\na|^{-1}(\U\cd\na \B,\B\cd\na \U)\|_{L^2}d\tau \\
  \lesssim&\ \int_0^t \langle t-\tau\rangle^{-\frac{1}{2}}
  \|\U\otimes\B\|_{L^2}d\tau\\
  \lesssim&\ \int_0^t \langle t-\tau\rangle^{-\frac{1}{2}}
  \langle \tau\rangle^{-1.1}d\tau\ \|V\|_3^2
  \lesssim\ \langle t\rangle^{-\frac{1}{2}}\|V\|_3^2.
\end{aligned}
$$
Then we have
\begin{equation}\label{v2}
\|\widehat{v}\|_{L^2(D_4)}
\lesssim\ \LHH^{-\frac{1}{2}}
(\|V_0\|_3+\|V\|_3^2).
\end{equation}
Due to Remark \ref{rpp},
combining with  (\ref{v1}) and (\ref{v2})  leads to the estimate (\ref{4.1})$_1$.
\begin{rem}\label{ru}
In the estimate of $\|v\|_{L^2}$,
$\widehat{F^2} $ and $\widehat{G^2} $ are bounded as  follows:
$$|\widehat{F^2}|\le |\xe|
|\F\{\U\otimes \U,\B B\}|+|\xi||\F\{bb\}|,\ \ |\widehat{G^2}|\le |\xe|
|\F\{\U\otimes \B\}|.$$
Since $F^1$ and $G^1$ can be bounded by the same way,
we  can also obtain the similar estimate of $\|u\|_{L^2}$, see (\ref{4.2})$_1$.
\end{rem}

\subsection{The estimate of $(\ref{4.1})_2$}\label{subv2}
Due to (\ref{proper1}), we have
$$\|\p_x\langle\na\rangle^2v\|_{\F L^2(D_1)}\lesssim\
\|\p_xv\|_{\F L^2(D_1)}.$$
By  (\ref{rf1}), (\ref{decay1})$_1$ for $k=1$,  we have
$$
\|\p_xL_v\|_{\F L^2(D_1)}
\lesssim\ \|\p_xM_{3}(\p,t)v_0\|_{\F L^2(D_1)}
+\|\p_xM_{1}(\p,t)B_0\|_{\F L^2(D_1)}
\lesssim\ \langle t\rangle^{-1}(\|v_0\|_{L^1}+\|B_0\|_{L^1}).
$$
By using (\ref{rf1}), (\ref{bbfen}), (\ref{decay1})$_1$ for $k=2$,  we can also obtain

 $$
\begin{aligned}
 &\ \sum_{i=1,2}\|\p_xNL_{vi}\|_{\F L^2(D_1)}\\
 \lesssim&\ \int_0^t\|\p_xM_3(\p,t-\tau)F^2\|_{\F L^2(D_1)}d\tau
+ \int_0^t\|\p_xM_1(\p,t-\tau)G^2\|_{\F L^2(D_1)}d\tau
\\
\lesssim&\ \int_0^t \LH^{-1}\||\na|^{-1}\Big(\U\cd\na\U,\p_y(Bb),
\B\cd\na B,\U\cd\na\B,\B\cd\na\U\Big)\|_{L^2}d\tau
\\
&+\int_0^t \|\p_x^2M_3(\p,t-\tau)(bb)\|_{\F L^2(D_1)}d\tau\\
\lesssim&\ \int_0^t \LH^{-1}\|\Big(\U\otimes\U,B\B,\U\otimes\B\Big)\|_{L^2}d\tau
+\int_0^t \|\p_x^2M_3(\p,t-\tau)(bb)\|_{\F L^2(D_1)}d\tau.
 \end{aligned}
 $$
 By (\ref{ine3}), the first integral on the right hand side  can be bounded by
 \begin{equation}\label{w1}
C\int_0^t \langle t-\tau\rangle^{-1}\langle \tau\rangle^{-1.1}d\tau\ \|V\|_3^2\lesssim\ \langle t\rangle^{-1}\|V\|_3^2.
\end{equation}
For the second integral, we shall  split it into two integrals. Using (\ref{rf1}), (\ref{decay1})$_1$ for $k=2$ and $k=1$, (\ref{ine1}), (\ref{ine2}) for $p=2$, one can get
$$
\begin{aligned}
&\ \int_0^t \|\p_x^2M_3(\p,t-\tau)(bb)\|_{\F L^2(D_1)}d\tau\\
\lesssim&\ \int_0^{t/2} \|\p_x^2M_3(\p,t-\tau)(bb)\|_{\F L^2(D_1)}d\tau
+\int_{t/2}^t \|\p_xM_3(\p,t-\tau)(b\p_xb)\|_{\F L^2(D_1)}d\tau\\
\lesssim&\ \int_0^{t/2} \LH^{-\frac{3}{2}}\|bb\|_{L^1}d\tau
+\int_{t/2}^t \LH^{-\frac{3}{4}}\|b\p_xb\|_{L^1_y(L^2_x)}d\tau\\
\lesssim&\ \Big(\int_0^{t/2}
\langle t-\tau\rangle^{-\frac{3}{2}}\langle \tau\rangle^{-\frac{1}{2}}d\tau
+\int_{t/2}^t \LH^{-\frac{3}{4}}\langle \tau\rangle^{-\frac{5}{4}}d\tau\Big)\|V\|_3^2\\
\lesssim&\ \LHH^{-1}\|V\|_3^2.
\end{aligned}
$$
So
$$
\sum_{i=1,2}\|\p_x NL_{vi}\|_{\F L^2(D_1)}
\lesssim\ \langle t\rangle^{-1}\|V\|_3^2.
$$
Hence
\begin{equation}\label{v3}
\|\p_x\langle\na\rangle^2 v\|_{\F L^2(D_1)}
\lesssim\ \LHH^{-1}
(\|V_0\|_3+\|V\|_3^2).
\end{equation}
Applying (\ref{rf2}), (\ref{End1})$_3$ and (\ref{End3})$_4$ for $r=2$, (\ref{ine3}),  we have
$$
\begin{aligned}
\|\p_x\langle\na\rangle^2L_v\|_{\F L^2(D_{4})}
\lesssim &\ \|\p_x\langle\na\rangle^2M_3(\p,t)v_0\|_{\F L^2(D_{4})}
+\|\p_x\langle\na\rangle^2M_1(\p,t)B_0\|_{\F L^2(D_{4})}\\
\lesssim&\ \langle t\rangle^{-1}
(\|v_0\|_{H^3}+\|B_0\|_{H^2}).
\end{aligned}
$$
We deduce from (\ref{rf2}), (\ref{End1})$_3$,  (\ref{End3})$_4$ for $r=2$, (\ref{End3})$_8$ for $r=2$ and (\ref{ine3}) that
$$
\begin{aligned}
&\ \ \sum_{i=1,2}\|\p_x\langle\na\rangle^2NL_{vi}\|_{\F L^2(D_{4})}\\
\lesssim&\ \int_0^t
\big(\|\p_x M_3(\p,t-\tau)F^2\|_{\F L^2(D_{4})}
+\|\p_x \De M_3(\p,t-\tau)F^2\|_{\F L^2(D_{4})}\\
&+
\|\p_x\langle\na\rangle^2 M_1(\p,t-\tau)G^2\|_{\F L^2(D_{4})}\big)d\tau\\
\lesssim&\ \int_0^t \LH^{-1}
(\|F^2\|_{H^1}+\|G^2\|_{H^2})d\tau
+\int_0^t \LH^{-\frac{3}{2}}
(\|F^2\|_{H^1}+\|\p_x F^2\|_{H^2})d\tau\\
\lesssim&\ \int_0^t
(\LH^{-1}\langle \tau\rangle^{-1.01}
+\LH^{-\frac{3}{2}}\langle \tau\rangle^{-1})d\tau\|V\|_3^2
\lesssim\ \langle t\rangle^{-1}\|V\|_3^2.
\end{aligned}
$$
Thus we get
\begin{equation}\label{v4}
\|\p_x\langle\na \rangle^2v\|_{\F L^2(D_{4})}
\lesssim\ \LHH^{-1}(\|V_0\|_3+\|V\|_3^2).
\end{equation}
Due to Remark \ref{rpp},
we can get the desired result $(\ref{4.1})_2$ by combining (\ref{v3}) with (\ref{v4}).
\vskip .1in
\subsection{The estimate of $(\ref{4.1})_3$}\label{subv3}
\vskip .1in
Using (\ref{rf1}) and (\ref{decay1})$_3$ for $k=0$, we can infer
$$\|L_v\|_{\F L^1(D_1)}
\lesssim\ \|M_3(\p,t)v_0\|_{\F L^1(D_1)}+\|M_1(\p,t)B_0\|_{\F L^1(D_1)}
\lesssim\ \langle t\rangle^{-1}
(\|v_0\|_{L^1}+\|B_0\|_{L^1}).
$$
It follows from (\ref{rf1}), (\ref{bbfen}) and (\ref{decay1})$_3$ for $k=0$ that

$$
\begin{aligned}
 &\ \ \sum_{i=1,2}\|NL_{vi}\|_{\F L^1(D_1)}\\
 \lesssim&\ \int_0^t\|M_3(\p,t-\tau)F^2\|_{\F L^1(D_1)}d\tau
+ \int_0^t\|M_1(\p,t-\tau)G^2\|_{\F L^1(D_1)}d\tau
\\
\lesssim&\ \int_0^t \LH^{-1}\||\na|^{-1}\Big(\U\cd\na\U,\p_y(Bb),
\B\cd\na B,\U\cd\na\B,\B\cd\na\U\Big)\|_{L^2}d\tau
\\
&+\int_0^t \|\p_xM_3(\p,t-\tau)(bb)\|_{\F L^1(D_1)}d\tau\\
\lesssim&\ \int_0^t \LH^{-1}\|\Big(\U\otimes\U,B\B,\U\otimes\B\Big)\|_{L^2}d\tau
+\int_0^t \|\p_xM_3(\p,t-\tau)(bb)\|_{\F L^1(D_1)}d\tau,
 \end{aligned}
 $$
where
the first  integral can be bounded by (\ref{w1}).
If we use (\ref{End3})$_4$,   the second integral can be bounded by
 $$C \int_0^t \LH^{-1}\|bb\|_{\F L^1}d\tau
 \lesssim\ \int_0^t \LH^{-1}\langle\tau\rangle^{-1}d\tau\|V\|_3
 \lesssim\ \LHH^{-1+\epsilon}\|V\|_3^2$$
 for sufficiently small $\epsilon>0$. However, there is an $\epsilon$-loss in the decay rate. To overcome this difficulty, we split the interval $[0,t]$ into $[0,t/2)$ and $[t/2,t)$ again. In fact, using (\ref{rf1}), (\ref{decay1})$_3$ for $k=1$ and $k=0$,  (\ref{ine1})
 and (\ref{ine2}),
we can bound  the second integral by
$$
\begin{aligned}
&\ \ C(\int_0^{t/2}  \langle t-\tau\rangle^{-\frac{3}{2}}\|bb\|_{L^1}d\tau
 +\int_{t/2}^t  \langle t-\tau\rangle^{-\frac{3}{4}}\|b\p_xb\|_{L^1_y(L^2_x)}d\tau)\\
\lesssim&\ \Big(\int_0^{t/2}  \langle t-\tau\rangle^{-\frac{3}{2}}\langle\tau\rangle^{-\frac{1}{2}}d\tau
 +\int_{t/2}^t  \langle t-\tau\rangle^{-\frac{3}{4}}
 \langle\tau\rangle^{-\frac{5}{4}}d\tau\Big)
\|V\|_3^2\\
\lesssim&\ \langle t\rangle^{-1}\|V\|_3^2.
\end{aligned}
$$
These estimates follows
$$\sum_{i=1,2}\|NL_{vi}\|_{\F L^1(D_1)}
\lesssim\ \langle t\rangle^{-1}\|V\|_3^2.$$
So
\begin{equation}\label{v5}
\|\widehat{v}\|_{L^1(D_1)}
\lesssim\ \LHH^{-1}(\|V_0\|_3+\|V\|_3^2).
\end{equation}
%\begin{rem}\label{addr6.1}
%The above arguments also suit for the estimate of %$\|\widehat{u}\|_{L^1(D_1)}$, namely,
% we can also get
%$$\|\widehat{u}\|_{L^1(D_1)}
%\lesssim\ \langle t\rangle^{-1}(\|V_0\|_3+
%\|V\|_3^2).$$
%\end{rem}
Using (\ref{rf2}), (\ref{End3})$_2$, (\ref{End1})$_4$ for $r=1$, we can deduce
$$
\begin{aligned}
\|L_v\|_{\F L^1(D_{4})}
\lesssim &\ \|M_3(\p,t)v_0\|_{\F L^1(D_4)}+\|M_1(\p,t)B_0\|_{\F L^1(D_4)}\\
\lesssim&\ \langle t\rangle^{-1}
(\|v_0\|_{\F L^1\cap L^1\cap L^1_y(L^\frac{4}{3}_x)}
+\|B_0\|_{L^1}+\|\langle\na\rangle^{1.51} B_0\|_{L^1_x(L^2_y)})\\
\lesssim&\ \langle t\rangle^{-1}\|V_0\|_3.
\end{aligned}
$$
Using (\ref{rf2}), (\ref{f2}), (\ref{bbfen}), (\ref{End3})$_{10}$, and (\ref{End3})$_{11}$, (\ref{ine4}), we infer
$$
\begin{aligned}
  NL_{v1}\lesssim&\ \int_0^t\|M_3(\p,t-\tau)F^2\|_{\F L^1(D_4)}d\tau\\
  \lesssim&\ \int_0^t\|\mathcal{R}_1M_3(\p,t-\tau)(\U\cd\na\U,\B\cd\na\B)
  \|_{\F L^1(D_4)}d\tau\\
  \lesssim&\ \int_0^t\|\mathcal{R}_1M_3(\p,t-\tau)(\U\cd\na\U,\B\cd\na B, \p_y(\B B))
  \|_{\F L^1(D_4)}d\tau\\
  &+\int_0^t \|\p_x\mathcal{R}_1M_3(\p,t-\tau)(bb)\|_{\F L^1(D_4)}d\tau\\
  \lesssim&\ \int_0^t\LH^{-1}\Big(\||\na|^{-1}(\U\cd\na\U,\B\cd\na B, \p_y(\B B))
  \|_{\F L^1}\\
  &+\|(\U\cd\na\U,\B\cd\na B, \p_y(\B B))\|_{\F L^1}\Big) d\tau\\
  &+\int_0^t\LH^{-\frac{3}{2}}\|(bb,b\p_xb)\|_{\F L^1}d\tau\\
  \lesssim&\ \int_0^t(\LH^{-1} \langle\tau\rangle^{-1.1}
  +\LH^{-\frac{3}{2}} \langle\tau\rangle^{-1})d\tau
  \|V\|_3^2\\
  \lesssim&\ \LHH^{-1}\|V\|_3^2
\end{aligned}
$$
and, by (\ref{g2}), (\ref{End1})$_3$ and (\ref{ine4}), we can also get
$$
\begin{aligned}
\|NL_{v2}\|_{\F L^1(D_4)}
\lesssim&\ \int_0^t \|\p_xM_1(\p,t-\tau)
(uB-bv)\|_{\F L^1(D_4)}d\tau\\
\lesssim&\ \int_0^t \langle t-\tau\rangle^{-1}
\|uB-bv\|_{\F L^1}d\tau\\
\lesssim&\  \int_0^t \langle t-\tau\rangle^{-1}\langle \tau\rangle^{-\frac{3}{2}}
d\tau \|V\|_3^2
\lesssim\ \langle t\rangle^{-1}\|V\|_3^2.
\end{aligned}
$$
Thus
\begin{equation}\label{v6}
\|\widehat{v}\|_{ L^1(D_4)}
\lesssim\ \langle t\rangle^{-1}(\|V_0\|_3+\|V\|_3^2).
\end{equation}
Due to Remark \ref{rpp},
collecting (\ref{v5}) and (\ref{v6}) can yield
(\ref{4.1})$_3$. This completes the proof of (\ref{4.1}).
\section{The  estimate on $u$}
\label{su}
\vskip.1in
In this section, we will prove
\begin{equation}\label{4.2}
\left\{
\begin{aligned}
\|u(t)\|_{L^2}\lesssim\ &\langle t\rangle^{-\frac{1}{2}}
\big(\|V_0\|_3+\|V\|_3^2\big);\\
\|\p_yu(t)\|_{L^2}\lesssim\ &\langle t\rangle^{-\frac{3}{4}}
\big(\|V_0\|_3+\|V_0\|_3^2+\|V\|_3^2+\|V\|_3^3\big);\\
\|\p_xu(t)\|_{H^2}\lesssim\ &\langle t\rangle^{-1}
\big(\|V_0\|_3+\|V\|_3^2\big);\\
\|u(t)\|_{\F L^1}\lesssim\ &\langle t\rangle^{-1}
\big(\|V_0\|_3+\|V_0\|_3^2+\|V\|_3^2+\|V\|_3^3\big);\\
\|\p_x u(t)\|_{\F L^1}\lesssim\ &\langle t\rangle^{-\frac{5}{4}}
\big(\|V_0\|_3+\|V\|_3^2\big).
\end{aligned}
\right.
\end{equation}
\subsection{The expression of $u$}
\vskip.2in
By (\ref{U}), we can obtain
$$
\begin{aligned}
u=&M_{3}(\p,t)u_0+M_{1}(\p,t)b_0
+\int_0^t M_{3}(\p,t-\tau)F^1d\tau
+\underbrace{\int_0^t M_{1} (\p,t-\tau)G^1d\tau}_{\mathcal{NL}_u}\\
=& \underbrace{M_{3}(\p,t)u_0+M_{1}(\p,t)b_0}_{L_u}
+\underbrace{\int_0^t M_{3}(\p,t-\tau)F^1d\tau}_{NL_{u1}}
+\underbrace{\int_0^t M_{1} (\p,t-\tau) G^{11}d\tau}_{NL_{u2}}\\
&-\underbrace{\int_0^t  M_{1} (\p,t-\tau)v\p_y bd\tau}_{NL_{u3}},
\end{aligned}
$$
where
$\mathcal{NL}_u=NL_{u2}-NL_{u3}$
and
$$F^1=-\U\cd\na u+\p_xp+\B\cd\na b,\
G^1=G^{11}-v\p_yb,\ G^{11}=-u\p_x b+\B\cd\na u.$$
As Remark \ref{ru},
we can prove (\ref{4.2})$_1$ by the previous arguments
yielding (\ref{4.1})$_1$.
 Since the estimates of $M_i$ $(i=1,2,3)$ on $D_1$ are similar, following the proof of
 $\|\p_xv\|_{\F L^2(D_1)}$ and $\|\widehat{v}\|_{L^1(D_1)}$ yields
\begin{equation}\label{d1}
\|\na\langle\na\rangle^2 u\|_{\F L^2(D_1)}+\|\widehat{u}\|_{L^1(D_1)}
 \lesssim\ \LHH^{-1}
 (\|V_0\|_3+\|V\|_3^2).
 \end{equation}
 Next, we turn to these estimates on $D_4$.
 When we use the  arguments in the  section \ref{sv}, the integral on $v\p_y b$ seems hard to be controlled. It is difficult to get the desired decay  rate at least. So we shall seek some new approaches to overcome this difficulty.
By frequency decomposition technique, we first obtain
\begin{equation}\label{add.3}
\begin{aligned}
NL_{u3}=&\int_0^t
M_{1}(\p,t-\tau)(v_{<\langle \tau\rangle^{-8}}\p_y b
+v_{>2\langle \tau\rangle^{-0.05}}\p_y b)d\tau\\
&+\int_0^t
M_{1}(\p,t-\tau)(P_\backsim v\p_y b)d\tau\\
:=&NL_{u3}'+NL_{u3}'',
\end{aligned}
\end{equation}
where $P_\backsim$  is given by (\ref{ppp}).
To obtain the desired estimate of $NL_{u3}''$, we need rewriting  its expression.
%\begin{rem}\label{r01}
%This frequency decomposition technique (\ref{add.3}) has been %used in \cite{Wu17}.
%\end{rem}
Thanks to (\ref{rrrr}),
then $v=\vec{R'}\cd \p_x \U$. Using the formations of (\ref{mhd})$_2$ and (\ref{mhd})$_1$,  integrating by parts on time, we can get
$$
\begin{aligned}
  NL_{u3}''
  =& \int_0^t M_{1}(\p,t-\tau)
  P_\backsim (\vec{R'}\cd\p_x \vec{u})\p_y bd\tau\\
  =& \int_0^t M_{1}(\p,t-\tau)
  P_\backsim \big(\vec{R'}\cd (\B_\tau+\U\cd\na \B-\B\cd\na \U)\big)\p_y b d\tau\\
  =&\int_0^t \p_\tau\Big[M_{1}(\p,t-\tau)
  P_\backsim (\vec{R'}\cd \B)\p_y b \Big]d\tau\\
&-\int_0^t \Big(\p_\tau M_{1}(\p,t-\tau)\Big)\Big[
  P_\backsim (\vec{R'}\cd \B)\p_y b \Big]d\tau\\
  &-\int_0^tM_{1}(\p,t-\tau)
 \Big[ P_\backsim (\vec{R'}\cd \B)\p_y b_\tau \Big]d\tau
  +\ {\rm ``other\ \  good\ \  parts"}\\
  =& -M_{1}(\p,t)\big(
  P_{\backsim1} (\vec{R'}\cd \B_0)\p_y b_0 \big)
-\int_0^t \Big(\p_\tau M_{1}(\p,t-\tau)\Big)\Big[
  P_\backsim (\vec{R'}\cd \B)\p_y b \Big]d\tau\\
  & -\int_0^tM_{1}(\p,t-\tau)\Big[
  P_\backsim (\vec{R'}\cd \B)\p_y \p_x u \Big]d\tau
  +\ {\rm ``other\ \  good\ \  parts"}\\
  =& J_1+J_2+J_3+\ {\rm ``other\ \  good\ \  parts"},
\end{aligned}
$$
where $\B_\tau=\p_\tau \B$ and $P_{\backsim1}f=P_{1\le \cd\le 2}f$.
\begin{rem}\label{addr1}
 Here ``{\rm other\ \  good\ \  parts}" include  two kinds of ``good" cases: (1) the integral on the term in which  $\p_\tau$ acts on $P_\backsim$;  (2) the integral on the nonlinear term consisting of three unknowns. The process that $\p_\tau$ hits $P_\backsim$  brings  the decay rate $\langle \tau\rangle^{-1}$, while
  three unknowns  shall bring the faster decay rate than two unknowns. So we can easily bound these cases, and  omit the details in the following context.
\end{rem}
Hence, we can get the new expression of $u$:
\begin{equation}\label{expu}
u=L_u+NL_{u1}+NL_{u2}-NL_{u3}'-\sum_{i=1}^3J_i-\ {\rm ``other\ \  good\ \  parts"},
\end{equation}
where, by using (\ref{tm}),
 $J_2$ and $J_3$ can be rewritten as follows
\begin{equation}\label{7.7}
\begin{aligned}
J_2
  =&\int_0^t (\p_y\p_\tau M_{1})(\p,t-\tau)\Big[
  P_\backsim (\vec{R'}\cd \B) b \Big]d\tau
  -\int_0^t (\p_\tau M_{1})(\p,t-\tau)\Big[\p_y
  P_\backsim (\vec{R'}\cd \B)b \Big]d\tau\\
  =&\int_0^t (\p_y M_{3})(\p,t-\tau)\Big[
  \p_xP_\backsim (\vec{R'}\cd \B) b \Big]d\tau
  +\int_0^t (\p_y M_{3})(\p,t-\tau)\Big[
  P_\backsim (\vec{R'}\cd \B)\p_x b \Big]d\tau\\
  &-\int_0^t (\p_x M_{3})(\p,t-\tau)\Big[\p_y
  P_\backsim (\vec{R'}\cd \B)b \Big]d\tau
  =J_{21}+J_{22}+J_{23}
  \end{aligned}
\end{equation}
and
\begin{equation}\label{7.8}
J_3=-\int_0^t\p_xM_{1}(\p,t-\tau)\Big[
  P_\backsim (\vec{R'}\cd \B)\p_y  u \Big]d\tau
  +\int_0^tM_{1}(\p,t-\tau)\Big[\p_x
  P_\backsim (\vec{R'}\cd \B)\p_y  u \Big]d\tau.
\end{equation}
(\ref{expu}) is used to prove the estimate of $\|\p_yu\|_{\F L^2(D_4)}$, but seems useless for the estimate of $\|\widehat{u}\|_{\F L^1(D_4)}$. Motivated by this fact,
 we give another   expression of $NL_{u3}''$. Using
$\p_yv=-\p_xu$, we have
$$
\begin{aligned}
 NL_{u3}''
 =& \int_0^t M_1(\p,t-\tau)(\p_xP_\backsim u b)d\tau
 +\int_0^t \p_yM_1(\p,t-\tau)(P_\backsim v b) d\tau\\
 =&\int_0^t M_1(\p,t-\tau)(\p_xP_\backsim u b)d\tau
 +\int_\frac{t}{2}^t \p_yM_1(\p,t-\tau)(P_\backsim v b) d\tau\\
 &+ \int_0^\frac{t}{2} \p_yM_1(\p,t-\tau)(P_\backsim v b) d\tau
 = O_1+O_2+O_3.
\end{aligned}
$$
Applying the similar techniques yielding the previous expression of $NL_{u3}''$ to $O_3$, we can get
$$
\begin{aligned}
O_3=&\p_yM_1(\p,t-\tau)(P_\backsim(\vec{R'}\cd\B)b)\Big|_{\tau
=0}^{\tau=\frac{t}{2}}-
\int_0^{t/2}(\p_\tau\p_yM_1(\p,t-\tau))\big
[P_\backsim(\vec{R'}\cd\B)b\big]d\tau
\\
&-\int_0^{t/2}\p_y M_1(\p,t-\tau)\big[P_\backsim(\vec{R'}\cd\B)\p_xu\big]d\tau
+{\rm ``other\ good\ parts"}\\
=&O_{31}+O_{32}+O_{33}+{\rm ``other\ good\ parts"}.
\end{aligned}
$$
Thus we have
\begin{equation}\label{expu2}
u=L_u+NL_{u1}+NL_{u2}-NL_{u3}'-O_1-O_2-\sum_{i=1}^3O_{3i}-\ {\rm ``other\ \  good\ \  parts"},
\end{equation}
where, by using (\ref{tm}), $O_{32}$ and $O_{33}$ can be rewritten by
$$
\begin{aligned}
O_{32}=\int_0^{t/2}(\p_\tau\p_yM_1)(\p,t-\tau)
[P_\backsim(\vec{R'}\cd\B)b]d\tau
=\int_0^{t/2}(\p_x\p_yM_3)(\p,t-\tau)
[P_\backsim(\vec{R'}\cd\B)b]d\tau
\end{aligned}
$$
and
$$
\begin{aligned}
O_{33}=&-\int_0^{t/2}\p_x\p_y M_1(\p,t-\tau)\big[P_\backsim(\vec{R'}\cd\B)u\big]d\tau
+\int_0^{t/2}\p_y M_1(\p,t-\tau)\big[\p_xP_\backsim(\vec{R'}\cd\B)u\big]d\tau\\
=&-\int_0^{t/2}\p_x M_1(\p,t-\tau)\big[\p_y P_\backsim(\vec{R'}\cd\B)u\big]d\tau
-\int_0^{t/2}\p_x M_1(\p,t-\tau)\big[ P_\backsim(\vec{R'}\cd\B)\p_yu\big]d\tau\\
&+\int_0^{t/2}\p_y M_1(\p,t-\tau)\big[\p_xP_\backsim(\vec{R'}\cd\B)u\big]d\tau.
\end{aligned}
$$
\subsection{The estimate of (\ref{4.2})$_2$}
Due to (\ref{d1}), it suffices to bound $\|\p_yL_u\|_{\F L^2(D_4)}$.
We shall use (\ref{expu}) to achieve the goal.
Thanks to (\ref{rf2}), (\ref{End3})$_3$ and (\ref{End1})$_2$ for $r=2$ and $\delta=0.01$, we have
$$
\begin{aligned}
\|\p_yL_u\|_{\F L^2(D_4)}
\lesssim&\ \|\p_yM_3(\p,t)u_0\|_{\F L^2(D_4)}+\|\p_yM_1(\p,t)b_0\|_{\F L^2(D_4)}\\
\lesssim&\ \langle t\rangle^{-1}
\|u_0\|_{L^1\cap H^1}+\langle t\rangle^{-\frac{3}{4}}\|\langle\na\rangle^{0.51} b_0\|_{L^1_x(L^2_y)})\\
\lesssim&\ \langle t\rangle^{-\frac{3}{4}}\|V_0\|_3.
\end{aligned}
$$
We deduce from  (\ref{rf2}), (\ref{End3})$_3$, (\ref{ine1}) and (\ref{ine3})  that
$$
\begin{aligned}
\|\p_yNL_{u1}\|_{\F L^2(D_4)}
\lesssim&\ \int_0^t
\|\p_yM_3(\p,t-\tau)(\U\cd\na \U,
\B\cd\na B,\p_y(Bb))\|_{\F L^2(D_4)}d\tau\\
&+\int_0^t
\|\p_yM_3(\p,t-\tau)(b\p_x b)\|_{\F L^2(D_4)}d\tau\\
\lesssim&\ \int_0^t
\langle t-\tau\rangle^{-1}(
\|\B B\|_{H^2}+\|\U\otimes \U\|_{H^2})d\tau\\
&+\int_0^t\langle t-\tau\rangle^{-1} \|b\p_x b\|_{H^1\cap L^1}
d\tau\\
\lesssim&\ \int_0^t (\langle t-\tau\rangle^{-1}
\langle \tau\rangle^{-1.01}+\LH^{-1}\langle
\tau\rangle^{-1})d\tau \|V\|_3^2\\
\lesssim&\ \langle t\rangle^{-0.89}\|V\|_3^2.
\end{aligned}
$$
By (\ref{rf2}), (\ref{End1})$_2$ for $r=2$ and $\delta=0.01$, (\ref{ine2}),  one has
$$
\begin{aligned}
  &\ \ \ \|\p_y NL_{u2}\|_{\F L^2(D_4)}+\|\p_yNL_{u3}'\|_{\F L^2(D_4)}\\
  \lesssim&\ \int_0^t \|\p_yM_1(\p,t-\tau)\big(u\p_x b,\B\cd\na u,v_{<\langle \tau\rangle^{-8}}\p_yb,v_{>2\langle \tau\rangle^{-0.05}}\p_yb\big)\|_{\F L^2(D_4)}d\tau\\
  \lesssim&\ \int_0^t \LH^{-\frac{3}{4}}
  \|\langle \na\rangle^{0.51}
  \big(u\p_xb,\B\cd\na u,v_{<\langle \tau\rangle^{-8}}\p_yb,v_{>2\langle \tau\rangle^{-0.05}}\p_yb\big)\|_{L^1_x(L^2_y)}
  d\tau\\
  \lesssim&\ \int_0^t \LH^{-\frac{3}{4}}\langle \tau\rangle^{-1.01}d\tau\|V\|_3^2\\
  \lesssim&\ \langle t\rangle^{-\frac{3}{4}}\|V\|_3^2.
\end{aligned}
$$
Similarly, by (\ref{rf2}), (\ref{End1})$_2$ for $r=2$ and $\delta=0.01$, we infer
$$
\begin{aligned}
\|\p_yJ_1\|_{\F L^2(D_4)}\lesssim&\
\langle t\rangle^{-\frac{3}{4}}
\|\langle \na\rangle^{0.51}\{P_{\backsim1}
(\vec{R'}\cd\B_0)\p_yb_0\}\|_{L^1_x(L^2_y)}\\
\lesssim&\ \langle t\rangle^{-\frac{3}{4}}
\|\langle \na\rangle^{2}\{P_{\backsim1}
(\vec{R'}\cd\B_0)\p_yb_0\}\|_{L^1}\\
\lesssim&\ \langle t\rangle^{-\frac{3}{4}}
\|P_{\backsim1}(\vec{R'}\cd\B_0)\|_{H^2}
\|\p_yb_0\|_{H^2}\\
\lesssim&\ \langle t\rangle^{-\frac{3}{4}}
\|\B_0\|_{H^2}
\|\p_yb_0\|_{H^2}
\lesssim\ \langle t\rangle^{-\frac{3}{4}}\|V_0\|_3^2.
\end{aligned}
$$
For the estimate on $J_2$,  it is sufficient to bound $J_{21}$, $J_{22}$ and $J_{23}$. We obtain, by using (\ref{rf2}), (\ref{End3})$_6$, and (\ref{ine3}), that
$$
\begin{aligned}
&\ \ \|\p_y J_{21}\|_{\F L^2(D_4)}
+\|\p_y J_{22}\|_{\F L^2(D_4)}\\
\lesssim&\ \int_0^t
\|\p_y^2M_3(\p,t-\tau)
(\p_xP_\backsim (\vec{R'}\cd \B)b,
P_\backsim (\vec{R'}\cd \B)\p_xb)\|_{\F L^2(D_4)}d\tau\\
\lesssim&\ \int_0^t \LH^{-1}
(\|\p_xP_\backsim (\vec{R'}\cd \B)b\|_{H^2}
+\|P_\backsim (\vec{R'}\cd \B)\p_xb\|_{H^2})d\tau\\
\lesssim&\ \int_0^t \LH^{-1}\langle \tau\rangle^{-1.01}d\tau\ \|V\|_3^2\\
\lesssim&\ \langle t\rangle^{-1}\|V\|_3^2.
\end{aligned}
$$
For $J_{23}$, using (\ref{rf2}), (\ref{End3})$_8$ and (\ref{ine3}), we have
$$
\begin{aligned}
\|\p_y J_{23}\|_{\F L^2(D_4)}
\lesssim&\ \int_0^t \|\p_x\p_yM_3(\p,t-\tau)(\p_yP_\backsim (\vec{R'}\cd \B)b)\|_{\F L^2(D_4)}d\tau\\
\lesssim&\ \int_0^t \LH^{-\frac{3}{2}}
(\|\p_x\big(\p_yP_\backsim (\vec{R'}\cd \B)b\big)\|_{H^1}
+\|\p_yP_\backsim (\vec{R'}\cd \B)b\|_{L^2})d\tau\\
\lesssim&\ \int_0^t \LH^{-\frac{3}{2}}\langle \tau\rangle^{-\frac{3}{4}}d\tau\ \|V\|_3^2
\lesssim\ \langle t\rangle^{-\frac{3}{4}}\|V\|_3^2.
\end{aligned}
$$
%%%%增加在其他区域的估计，再总结
It follows from (\ref{rf2}), (\ref{End1})$_3$ for $r=2$, (\ref{End1})$_2$ for $r=2$ and $\delta=0.01$, (\ref{ine3}) and (\ref{ine2}) that
$$
\begin{aligned}
\|\p_yJ_3\|_{\F L^2(D_4)}
\lesssim&\ \int_0^t
\|\p_x\p_yM_1(\p,t-\tau)(P_\backsim(\vec{R'}\cd \B)\p_y u)\|_{\F L^2(D_4)}d\tau\\
&+\int_0^t
\|\p_yM_1(\p,t-\tau)(\p_xP_\backsim(\vec{R'}\cd \B)\p_y u)\|_{\F L^2(D_4)}d\tau\\
\lesssim&\ \int_0^t \LH^{-1}
\|P_\backsim(\vec{R'}\cd \B)\p_y u\|_{H^1}d\tau\\
&+\int_0^t \LH^{-\frac{3}{4}}\|\langle \na\rangle^{0.51}\big(\p_x
P_\backsim(\vec{R'}\cd \B)\p_y u\big)\|_{L^1_x(L^2_y)}d\tau\\
\lesssim&\ \int_0^t (\LH^{-1} \langle \tau\rangle^{-1.01}
+\LH^{-\frac{3}{4}}\langle \tau\rangle^{-1.01})d\tau
\|V\|_3^2\\
\lesssim&\ \langle t\rangle^{-\frac{3}{4}}\|V\|_3^2.
\end{aligned}
$$
``Other good parts" on $D_4$ can be bounded by
$
C\langle t\rangle^{-\frac{3}{4}}(\|V\|_3^2+\|V\|_3^3)
$ by using similar arguments, so
 $$\|\p_y u\|_{\F L^2(D_4)}
 \lesssim\ \LHH^{-\frac{3}{4}}
 (\|V_0\|_3+\|V_0\|_3^2+\|V\|_3^2+\|V\|_3^3).$$
Hence, we have proved (\ref{4.2})$_2$.
\subsection{The estimate of (\ref{4.2})$_3$}
Thanks to (\ref{d1}),
it is sufficient  to estimate  $\|\p_x\langle\na\rangle^2 u\|_{\F L^2(D_4)}$.
Using  (\ref{rf2}), (\ref{End3})$_4$ and (\ref{End1})$_3$ for $r=2$, we get
$$
\begin{aligned}
\|\p_x\langle\na\rangle^2 L_u\|_{\F L^2(D_4)}
\lesssim&\ \|\p_x\langle\na\rangle^2M_3(\p,t) u_0\|_{\F L^2(D_4)}
+\|\p_x\langle\na\rangle^2M_1(\p,t) b_0\|_{\F L^2(D_4)}\\
\lesssim&\ \langle t\rangle^{-1}(\|u_0\|_{H^3}+\|b_0\|_{H^2})
\lesssim\ \langle t\rangle^{-1}\|V_0\|_3.
\end{aligned}
$$
Using (\ref{rf2}), (\ref{End3})$_4$ for $r=2$ and (\ref{End3})$_6$, and (\ref{ine3}), we can obtain
$$
\begin{aligned}
\|\p_x\langle\na\rangle^2 NL_{u1}\|_{\F L^2(D_4)}
\lesssim&\ \|\p_x NL_{u1}\|_{\F L^2(D_4)}
+\|\p_x\De NL_{u1}\|_{\F L^2(D_4)}
\\
\lesssim&\ \int_0^t \LH^{-1}\|(\U\cd\na \U,\B\cd\na \B)\|_{H^1}d\tau\\
&+\int_0^t \LH^{-\frac{3}{2}}\|\p_x(\U\cd\na \U,\B\cd\na \B)\|_{H^2}d\tau\\
\lesssim&\ \int_0^t (\LH^{-1}\langle\tau\rangle^{-1.01}
+\LH^{-\frac{3}{2}}\langle\tau\rangle^{-1})d\tau\|V\|_3^2\\
\lesssim&\ \langle t\rangle^{-1}\|V\|_3^2.
\end{aligned}
$$
We can deduce from (\ref{rf2}), (\ref{End1})$_3$ for $r=2$ and  (\ref{ine3}) that
$$
\begin{aligned}
\|\p_x\langle\na\rangle^2 \mathcal{NL}_{u}\|_{\F L^2(D_4)}
\lesssim&\ \int_0^t
\|\p_x\langle\na\rangle^2M_1(\p,t-\tau)(\U\cd\na b, \B\cd\na u)\|_{\F L^2(D_{4})}d\tau\\
\lesssim&\ \int_0^t \LH^{-1}(\|\U\cd\na b\|_{H^2}
+\|\B\cd\na u\|_{H^2})d\tau\\
\lesssim&\ \int_0^t \LH^{-1}\langle\tau\rangle^{-1.1}d\tau\|V\|_3^2
\lesssim\ \langle t\rangle^{-1}\|V\|_3^2.
\end{aligned}
$$
As a consequence, we have
$$\|\p_x\langle\na\rangle^2 u\|_{\F L^2(D_4)}
\lesssim
\LHH^{-1}(\|V_0\|_3+\|V\|_3^2).$$
Then we  complete the proof of
(\ref{4.2})$_3$.
\subsection{The estimate of (\ref{4.2})$_4$}
In this subsection, we shall use (\ref{expu2}) to bound $\|\widehat{u}\|_{L^1(D_4)}$. Like the derivation of the estimate of $\|L_v\|_{\F L^1(D_4)}$, one can get
$$
\|L_u\|_{\F L^1(D_4)}
\lesssim\ \langle t\rangle^{-1}\|V_0\|_3.
$$
Using (\ref{bbfen}), we have
$$
\begin{aligned}
\|NL_{u1}\|_{\F L^1(D_4)}
\lesssim&\ \int_0^t \|M_3(\p,t-\tau) F^1\|_{\F L^1(D_4)}d\tau\\
\lesssim&\ \int_0^t \|M_3(\p,t-\tau) (\U\cd\na\U,\B\cd\na B,
\p_y(Bb))\|_{\F L^1(D_4)}d\tau\\
&+\int_0^t \|\p_xM_3(\p,t-\tau)(bb) \|_{\F L^1(D_4)}d\tau.
\end{aligned}
$$
Using (\ref{rf2}), (\ref{End3})$_2$, (\ref{End3})$_5$, (\ref{ine1})-(\ref{ine4}),  the first integral can be bounded by
$$
\begin{aligned}
&\ \ C\int_0^t \LH^{-1}(\||\na|^{-1}(\U\cd\na\U,\B\cd\na B,
\p_y(Bb))\|_{L^2\cap \F L^1}+\|(\U\cd\na\U,\B\cd\na B,
\p_y(Bb))\|_{\F L^1})d\tau\\
\lesssim&\ \int_0^t \LH^{-1}\langle \tau\rangle^{-1.1}d\tau\|V\|_3^2
\lesssim\ \LHH^{-1}\|V\|_3^2,
\end{aligned}
 $$
 while the second integral can be bounded by
 $$
\begin{aligned}
&\ \ C(\int_0^\frac{t}{2}\LH^{-\frac{3}{2}}
(\|bb\|_{L^2\cap \F L^1}+\|b\p_xb\|_{\F L^1})d\tau\\
&+\int_{\frac{t}{2}}^t\LH^{-\frac{3}{4}}
(\|b\p_xb\|_{L^1_y(L^2_x)}+\|b\p_xb\|_{\F L^1})d\tau)\\
\lesssim&\ (
\int_0^\frac{t}{2}\LH^{-\frac{3}{2}}\langle\tau\rangle^{-\frac{1}
{2}}d\tau+\int_{\frac{t}{2}}^t\LH^{-\frac{3}{4}}
\langle\tau\rangle^{-\frac{5}{4}}d\tau)\|V\|_3^2\\
\lesssim&\ \LHH^{-1}\|V\|_3^2.
\end{aligned}
 $$
Thus
$$\|NL_{u1}\|_{\F L^1(D_4)}\lesssim
\ \langle t\rangle^{-1}\|V\|_3^2.$$
It follows from (\ref{rf2}), (\ref{End1})$_4$ and (\ref{ine1}), (\ref{ine2}) that
$$
\begin{aligned}
 \|NL_{u2}\|_{\F L^1(D_4)}
 \lesssim&\ \int_0^t
 \|M_1(\p,t-\tau)(u\p_x b,\B\cd\na u)\|_{\F L^1(D_4)}d\tau\\
 \lesssim&\ \int_0^t \LH^{-1}
 (\|\langle\na \rangle^{1.51}(u\p_xb,\B\cd\na u) \|_{L^1_x(L^2_y)}+\|(u\p_xb,\B\cd\na u)\|_{L^1})d\tau\\
 \lesssim&\ \int_0^t \LH^{-1}\langle \tau\rangle^{-1.01}d\tau\|V\|_3^2\\
 \lesssim&\ \langle t\rangle^{-1}\|V\|_3^2.
\end{aligned}
$$
Using (\ref{rf2}),(\ref{End1})$_2$ for $r=1$ and $\delta=0.01$, (\ref{End1})$_4$,
(\ref{ine1}) and (\ref{ine2}) that
$$
\begin{aligned}
 \|NL_{u3}'\|_{\F L^1(D_4)}
 \lesssim&\ \int_0^t\|M_1(\p,t-\tau)(v_{<\langle\tau\rangle^{-8}}\p_yb,v_{>2
 \langle\tau\rangle^{-0.05}}\p_yb)\|_{\F L^1(D_4)}d\tau\\
 \lesssim&\ \int_0^t\|\p_yM_1(\p,t-\tau)(v_{<\langle\tau\rangle^{-8}}b,v_{>2
 \langle\tau\rangle^{-0.05}}b)\|_{\F L^1(D_4)}d\tau\\
 &+\int_0^t\|M_1(\p,t-\tau)(\p_yv_{<\langle\tau\rangle^{-8}}b,\p_yv_{>2
 \langle\tau\rangle^{-0.05}}b)\|_{\F L^1(D_4)}d\tau\\
 \lesssim&\ \int_0^t
 \LH^{-1}
 \|\langle \na\rangle^{1.51}(v_{<\langle\tau\rangle^{-8}}b,
v_{>2
 \langle\tau\rangle^{-0.05}}b)\|_{L^1_x(L^2_y)}d\tau\\
 &+\int_0^t
 \LH^{-1}
 \Big(\|(\p_xu_{<\langle\tau\rangle^{-8}}b,
\p_xu_{>2
 \langle\tau\rangle^{-0.05}}b)\|_{L^1}\\
& +\|\langle \na\rangle^{1.51}(\p_xu_{<\langle\tau\rangle^{-8}}b,
\p_xu_{>2
 \langle\tau\rangle^{-0.05}}b)\|_{L^1_x(L^2_y)}\Big)d\tau\\
 \lesssim&\ \int_0^t \LH^{-1}\langle \tau\rangle^{-1.01}d\tau
 \|V\|_3^2\lesssim\ \langle t\rangle^{-1}\|V\|_3^2.
\end{aligned}
$$
We can get by using (\ref{rf2}), (\ref{End1})$_4$,
 $$\|\p_xP_\backsim u b\|_{ L^1}
 \lesssim\ \|\p_xu\|_{L^2}\|b\|_{L^2}
 \lesssim\ \LHH^{-\frac{5}{4}}\|V\|_3^2$$
 and (\ref{ine2}) that
$$
\begin{aligned}
\|\widehat{O_1}\|_{L^1(D_4)}
\lesssim&\ \int_0^t \|M_1(\p,t-\tau)(\p_xP_\backsim ub)\|_{\F L^1(D_4)}d\tau\\
\lesssim&\ \int_0^t
\LH^{-1}(\|\p_xP_\backsim u b\|_{ L^1}
+\|\langle\na\rangle^{1.51}(\p_xP_\backsim ub)\|_{L^1_x(L^2_y)})d\tau\\
\lesssim&\ \int_0^t \LH^{-1}\langle\tau\rangle^{-1.01}d\tau
\|V\|_3^2
\lesssim\ \LHH^{-1}\|V\|_3^2.
\end{aligned}
$$
Applying (\ref{rf2}), (\ref{End1})$_2$, we have
$$
\begin{aligned}
\|\widehat{O_2}\|_{L^1(D_4)}
\lesssim&\ \int^t_{t/2} \|\p_yM_1(\p,t-\tau)(P_\backsim vb)\|_{\F L^1(D_4)}d\tau\\
\lesssim&\ \int^t_{t/2} \LH^{-\frac{1}{2}}
\|\widehat{v}\|_{L^1}\|\widehat{b}\|_{L^1}d\tau\\
\lesssim&\ \int^t_{t/2} \LH^{-\frac{1}{2}} \langle\tau\rangle^{-\frac{3}{2}}d\tau
\|V\|_3^2\\
\lesssim&\ \LHH^{-1}\|V\|_3^2.
\end{aligned}
$$
It follows from (\ref{rf2}), (\ref{End1})$_2$, and (\ref{ine4}), (\ref{ine2}) that
$$
\begin{aligned}
\|\widehat{O_{31}}\|_{L^1(D_4)}
\lesssim&\
\|\p_yM_1(\p,\frac{t}{2})
\big(P_{\thickapprox}
(\vec{R'}\cd\B) b\big)(\frac{t}{2})\|_{\F L^1(D_4)}
+\|\p_yM_1(\p,t)(P_{\backsim1}
(\vec{R'}\cd\B_0) b_0)\|_{\F L^1(D_4)}\\
\lesssim&\ \LHH^{-\frac{1}{2}}\|\big[P_{\thickapprox}
(\vec{R'}\cd\B) b\big](\frac{t}{2})\|_{\F L^1}
+\LHH^{-1}\|\langle\na\rangle^{1.51}(P_{\backsim1}(\vec{R'}\cd \B_0)b_0)\|_{L^1_x(L^2_y)}\\
\lesssim&\ \LHH^{-1}(\|V\|_3^2+\|V_0\|_3^2).
\end{aligned}
$$
Using (\ref{rf2}), (\ref{End3})$_8$, and (\ref{ine4}), we can deduce
$$
\begin{aligned}
\|\widehat{O_{32}}\|_{L^1(D_4)}
\lesssim&\ \int_0^{t/2}
\|\p_x\p_yM_3(\p,t-\tau)(P_\backsim(\vec{R'}\cd\B)b)\|_{\F L^1(D_4)}d\tau\\
\lesssim&\ \int_0^{t/2}
\LH^{-\frac{3}{2}}
(\|P_\backsim(\vec{R'}\cd\B)b\|_{\F L^1}+
\|\p_x\p_y(P_\backsim(\vec{R'}\cd\B)b)\|_{\F L^1})d\tau\\
\lesssim&\ \int_0^{t/2}
\LH^{-\frac{3}{2}}\langle \tau\rangle^{-0.99}d\tau\|V\|_3^2
\lesssim\ \LHH^{-1.4}\|V\|_3^2.
\end{aligned}
$$
Thanks to (\ref{rf2}), (\ref{End1})$_3$ for $r=1$ and (\ref{End1})$_2$ for $r=1$, one has
$$
\begin{aligned}
\|\widehat{O_{33}}\|_{L^1(D_4)}
\lesssim&\ \int_0^{t/2}
\|\p_xM_1(\p,t-\tau)(\p_yP_\backsim(\vec{R'}\cd\B)u
)\|_{\F L^1(D_4)}d\tau\\
&+\int_0^{t/2}
\|\p_xM_1(\p,t-\tau)(P_\backsim(\vec{R'}\cd\B)\p_yu
)\|_{\F L^1(D_4)}d\tau\\
&+\int_0^{t/2}
\|\p_yM_1(\p,t-\tau)(\p_xP_\backsim(\vec{R'}\cd\B)u
)\|_{\F L^1(D_4)}d\tau\\
\lesssim&\ \int_0^{t/2} \LH^{-1}\big(\|\p_yP_\backsim(\vec{R'}\cd\B)u\|_{\F L^1}
+\|P_\backsim(\vec{R'}\cd\B)\p_yu\|_{\F L^1}\\
&+\|\langle\na\rangle^{1.51}(\p_xP_\backsim(\vec{R'}\cd\B)
u)\|_{L^1_x(L^2_y)}\big)d\tau\\
\lesssim&\ \int_0^{t/2} \LH^{-1}\langle \tau\rangle^{-1.01}
d\tau\|V\|_3^2\lesssim\ \LHH^{-1}\|V\|_3^2.
\end{aligned}
$$
The estimate on ``other good parts"
can be bounded by
$C\langle t\rangle^{-1}(\|V\|_3^2+\|V\|_3^3)$. Hence,
 we can obtain
$$\|u\|_{\F L^1(D_4)}\lesssim\ \langle t\rangle^{-1}
(\|V_0\|_3+\|V_0\|_3^2+\|V\|_3^2+\|V\|_3^3),$$
which completes the proof of (\ref{4.2})$_4$.
\subsection{The estimate of (\ref{4.2})$_5$}
Using (\ref{rf1}),
$|\xe|\lesssim 1$ in $D_1$,
(\ref{decay1})$_3$ for $k=1$, we have
$$
\|\p_xL_u\|_{\F L^1(D_1)}\le\
\|\p_xM_3(\p,t)u_0\|_{\F L^1(D_1)}+\|\p_xM_1(\p,t)b_0\|_{\F L^1(D_1)}\lesssim
\langle t\rangle^{-\frac{3}{2}}(\|u_0\|_{L^1}+\|b_0\|_{L^1}).
$$
Thanks to (\ref{rf1}), (\ref{decay1})$_3$ for $k=1$, (\ref{ine2}) and (\ref{ine3}), one can get
$$
\begin{aligned}
 \|\p_xNL_{u1}\|_{\F L^1(D_1)}
 \le&\ \int_0^t \|\p_xM_3(\p,t-\tau)F^1\|_{\F L^1(D_1)}d\tau\\
 \lesssim&\ \int_0^t \|\p_xM_3(\p,t-\tau)(\U\cd\na\U,\B\cd\na B,\p_y(Bb))\|_{\F L^1(D_1)}d\tau\\
 &+\int_0^t \|\p_xM_3(\p,t-\tau)(b\p_xb)\|_{\F L^1(D_1)}d\tau\\
\lesssim&\ \int_0^t \LH^{-\frac{3}{2}}
\||\na|^{-1}(\U\cd\na\U,\B\cd\na B,\p_y(Bb))\|_{L^2}d\tau\\
&+\int_0^t \LH^{-\frac{5}{4}}\|b\p_xb\|_{L^1_y(L^2_x)}d\tau\\
\lesssim&\ \int_0^t \LH^{-\frac{3}{2}}\langle \tau\rangle^{-\frac{5}{4}}
d\tau\|V\|_3^2
\lesssim\ \LHH^{-\frac{5}{4}}\|V\|_3^2.
\end{aligned}
$$
It follows from (\ref{rf1}), (\ref{decay1})$_3$ for $k=1$ and
$$\|\p_x(b\U-u\B)\|_{L^1}\lesssim\ \|\B\|_{L^2}\|\p_x\U\|_{L^2}
+\|\U\|_{L^2}\|\p_x\B\|_{L^2}
\lesssim\ \LHH^{-\frac{5}{4}}\|V\|_3^2,$$
 that
$$
\begin{aligned}
 \|\p_x\mathcal{NL}_{u}\|_{\F L^1(D_1)}
 \le&\ \int_0^t \|\p_xM_1(\p,t-\tau){\rm div}(b\U-u\B)\|_{\F L^1(D_1)}d\tau\\
 \lesssim&\ \int_0^t \|M_1(\p,t-\tau){\rm div}\big[\p_x(b\U-u\B)\big]\|_{\F L^1(D_1)}d\tau\\
 \lesssim&\ \int_0^t \LH^{-\frac{3}{2}}\|\p_x(b\U-u\B)\|_{L^1}d\tau\\
 \lesssim&\ \int_0^t \LH^{-\frac{3}{2}}\langle \tau\rangle^{-\frac{5}{4}}d\tau\|V\|_3^2
 \lesssim\ \LHH^{-\frac{5}{4}}\|V\|_3^2.
\end{aligned}
$$
Thus
\begin{equation}\label{f11}
\|\p_xu\|_{\F L^1(D_1)}
\lesssim\ \LHH^{-\frac{5}{4}}(\|V_0\|_3+\|V\|_3^2).
\end{equation}
Next, we bound the estimate on $D_4$. Using (\ref{rf2}), (\ref{End3})$_5$, and (\ref{End1})$_3$ for $r=1$, we  find
$$
\begin{aligned}
\|\p_xL_u\|_{\F L^1(D_4)}\lesssim&\
\|\p_xM_3(\p,t)u_0\|_{\F L^1(D_4)}
+\|\p_xM_1(\p,t)b_0\|_{\F L^1(D_4)}\\
\lesssim&\ \langle t\rangle^{-\frac{3}{2}}(\|u_0\|_{L^2\cap \F L^1}
+\|\p_xu\|_{\F L^1}+\|\widehat{b_0}\|_{L^1})\\
\lesssim&\
\langle t\rangle^{-\frac{3}{2}}
\|V_0\|_3.
\end{aligned}
$$
Applying \ref{rf2}), (\ref{End3})$_8$ for $r=1$, (\ref{End3})$_9$, (\ref{End3})$_4$ for $r=1$, and (\ref{ine4}), one has
$$
\begin{aligned}
\|\p_xNL_{u1}\|_{\F L^1(D_4)}
\lesssim&\ \int_0^t
\|\p_xM_3(\p,t-\tau)F^1\|_{\F L^1(D_4)}d\tau\\
\lesssim&\ \int_0^t
\|\p_xM_3(\p,t-\tau)(\U\cd\na \U,\B\cd\na B,\p_y(Bb))\|_{\F L^1(D_4)}d\tau\\
&+\int_0^t
\|\p_x^2M_3(\p,t-\tau)(bb)\|_{\F L^1(D_4)}d\tau\\
\lesssim&\ \int_0^t \LH^{-\frac{3}{2}}
(\|\U\otimes\U\|_{\F L^1}+\|\p_x(\U\cd\na \U)\|_{\F L^1}\\
&+\|B\B\|_{\F L^1}+\|\p_x\na (\B B)\|_{\F L^1})d\tau\\
&+\int_0^\frac{t}{2}\LH^{-2} (\|bb\|_{\F L^1}+\|\p_x(b\p_xb)\|_{\F L^1})d\tau\\
&+\int_{\frac{t}{2}}^t\LH^{-1} (\|b\p_xb\|_{\F L^1}+\|\p_x(b\p_xb)\|_{\F L^1})d\tau\\
\lesssim&\ (\int_0^t\LH^{-\frac{3}{2}}
\langle \tau\rangle^{-\frac{5}{4}}
d\tau
+\int_0^\frac{t}{2}\LH^{-2} \langle \tau\rangle^{-1}d\tau\\
&+\int_{\frac{t}{2}}^t\LH^{-1} \langle \tau\rangle^{-1.3}d\tau )\|V\|_3^2
\lesssim\ \langle t\rangle^{-\frac{5}{4}}\|V\|_3^2.
\end{aligned}
$$
We can infer from (\ref{rf2}), (\ref{End1})$_3$ for $r=1$ and $\delta=0.01$ that
$$
\begin{aligned}
\|\p_x\mathcal{NL}_{u}\|_{\F L^1(D_4)}
\lesssim&\ \int_0^t
\|\p_xM_1(\p,t-\tau)(\U\cd\na \B,\B\cd\na \U)\|_{\F L^1(D_4)}d\tau\\
\lesssim&\ \int_{\frac{t}{2}}^t\LH^{-1}
\|(\U\cd\na \B,\B\cd\na \U)\|_{\F L^1(D_4)}d\tau\\
&+\int_0^{\frac{t}{2}}\LH^{-\frac{3}{2}}
\|\langle \na\rangle^{1.51}(\U\cd\na\U,\B\cd\na\B)\|_{L^1_x(L^2_y)}
d\tau\\
\lesssim&\ (\int_{\frac{t}{2}}^t
\LH^{-1}\langle \tau\rangle^{-1.3} d\tau
+\int_0^{\frac{t}{2}}
\LH^{-\frac{3}{2}}
\langle \tau\rangle^{-0.75} d\tau) \|V\|_3^2\\
\lesssim&\ \langle t\rangle^{-\frac{5}{4}}\|V\|_3^2.
\end{aligned}
$$
Therefore, it comes out
\begin{equation}\label{f12}
\|\p_xu\|_{\F L^1(D_4)}
\lesssim\ \LHH^{-\frac{5}{4}}(\|V_0\|_3+\|V\|_3^2).
\end{equation}
Collecting the above estimates (\ref{f11}) and (\ref{f12}) can yield (\ref{4.2})$_5$.
\vskip .3in
\section{The  estimate on $b$}
\label{s4}
\vskip.1in
In this section, we will prove
\begin{equation}\label{4.3}
\left\{
\begin{aligned}
\|b(t)\|_{L^2}\lesssim\ &\langle t\rangle^{-\frac{1}{4}}
\big(\|V_0\|_3+\|V_0\|_3^2+\|V\|_3^2+\|V\|_3^3\big);\\
\|\p_xb(t)\|_{H^1}\lesssim\ &\langle t\rangle^{-\frac{3}{4}}
\big(\|V_0\|_3+\|V_0\|_3^2+\|V\|_3^2+\|V\|_3^3\big);\\
\||\na|^{-1}\langle\na\rangle b(t)\|_{\F L^1}\lesssim\ &\langle t\rangle^{-\frac{1}{2}}
\big(\|V_0\|_3+\|V_0\|_3^2+\|V\|_3^2+\|V\|_3^3\big);\\
\|\mathcal{R}_1\langle\na\rangle b(t)\|_{\F L^1}\lesssim\ &\langle t\rangle^{-1}
\big(\|V_0\|_3+\|V_0\|_3^2+\|V\|_3^2+\|V\|_3^3\big).
\end{aligned}
\right.
\end{equation}
\subsection{The expression of $b$}
Recalling (\ref{B}),
it is easy to get
$$
\begin{aligned}
b=&M_{1}(\p,t)u_0+M_{2}(\p,t)b_0
+\underbrace{\int_0^t M_{1}(\p,t-\tau)F^1d\tau
+\int_0^t M_{2} (\p,t-\tau)G^1d\tau}_{\mathcal{NL}_b}\\
=&\underbrace{M_{1}(\p,t)u_0+M_{2}(\p,t)b_0}_{L_b}+ \underbrace{\int_0^tM_{1}(\p,t-\tau)F^1d\tau}_{NL_{b1}}\\
&+\underbrace{\int_0^t M_{2}(\p,t-\tau)G^{11}d\tau}_{NL_{b2}}
-\underbrace{\int_0^t M_{2}(\p,t-\tau)v\p_ybd\tau}_{NL_{b3}}.
\end{aligned}
$$
As the same reason in the previous section, we need some new expression of $b$ to overcome the difficulty coming from the estimate of $v\p_yb$. Similar to the derivation of
  the expression of $NL_{u3}$,  we can get the expression of $NL_{b3}$. As a matter of fact,
$$NL_{b3}=NL_{b3}'+NL_{b3}'',$$
where
$$
\begin{aligned}
NL_{b3}'=&\int_0^t M_{2}(\p,t-\tau)(v_{<\langle \tau\rangle^{-8}}\p_yb)d\tau
+\int_0^t M_{2}(\p,t-\tau)(v_{>2\langle \tau\rangle^{-0.05}}\p_yb)d\tau\\
=&\int_0^t \p_yM_{2}(\p,t-\tau)(v_{<\langle \tau\rangle^{-8}}b+v_{>2\langle \tau\rangle^{-0.05}}b)d\tau\\
&-\int_0^t M_{2}(\p,t-\tau)(\p_yv_{<\langle \tau\rangle^{-8}}b+\p_yv_{>2\langle \tau\rangle^{-0.05}}b)d\tau,
\end{aligned}
$$
and
$$
\begin{aligned}
NL_{b3}''=&  -M_{2}(\p,t)\big(
  P_{\backsim1} (\vec{R'}\cd \B_0)\p_y b_0 \big)
-\int_0^t \Big(\p_\tau M_{2}(\p,t-\tau)\Big)\Big[
  P_\backsim (\vec{R'}\cd \B)\p_y b \Big]d\tau\\
  & -\int_0^tM_{2}(\p,t-\tau)\Big[
  P_\backsim (\vec{R'}\cd \B)\p_y \p_x u \Big]d\tau
  +\ {\rm ``other\ \  good\ \  parts"}\\
  =& L_1+L_2+L_3+\ {\rm ``other\ \  good\ \  parts"}.
\end{aligned}
$$
Thus we have
$$b=L_b+NL_{b1}+NL_{b2}-NL_{b3}'-\sum_{i=1}^3L_i-{\rm ``other\ \  good\ \  parts"},$$
where, by (\ref{tm}),
$L_2$ and $L_3$ can be rewrite as follows
$$
\begin{aligned}
L_2=&\int_0^t \Big(\p_x M_{1}\Big)(\p,t-\tau)\Big[
  P_\backsim (\vec{R'}\cd \B)\p_y b \Big]d\tau\\
  =&\int_0^t \Big(\p_y\p_x M_{1}\Big)(\p,t-\tau)\Big[
  P_\backsim (\vec{R'}\cd \B) b \Big]d\tau
  -\int_0^t \Big(\p_x M_{1}\Big)(\p,t-\tau)\Big[
  \p_yP_\backsim (\vec{R'}\cd \B) b \Big]d\tau\\
  =&\int_0^t \Big(\p_y M_{1}(\p,t-\tau)\Big)\p_x\Big[
  P_\backsim (\vec{R'}\cd \B) b \Big]d\tau
-\int_0^t \Big(\p_x M_{1}(\p,t-\tau)\Big)\Big[
  \p_yP_\backsim (\vec{R'}\cd \B) b \Big]d\tau
  \end{aligned}
  $$
  and
$$L_3=-\int_0^t\p_xM_{2}(\p,t-\tau)\Big[
  P_\backsim (\vec{R'}\cd \B)\p_y u \Big]d\tau
  +\int_0^tM_{2}(\p,t-\tau)\Big[
  \p_xP_\backsim (\vec{R'}\cd \B)\p_y u \Big]d\tau.$$
  Following the idea dealing with $\|\widehat{v}\|_{L^2(D_1)}$ and $\|\p_xv\|_{\F L^2(D_1)}$, one can easily obtain
  \begin{equation}\label{000}
  \begin{aligned}
\|\widehat{b}\|_{L^2(D_1)}
\lesssim&\ \LHH^{-\frac{1}{2}}(\|V_0\|_3+\|V\|_3^2),\\
\|\p_x\langle\na\rangle b\|_{\F L^2(D_1)}
\lesssim&\ \|\p_x b\|_{\F L^2(D_1)}\lesssim\ \langle t\rangle^{-1}
(\|V_0\|_3+\|V\|_3^2).
\end{aligned}
\end{equation}
\subsection{The estimate of (\ref{4.3})$_1$}
Thanks to (\ref{000}), it suffices  to give the estimate of $\|\widehat{b}\|_{L^2(D_4)}$.
Using (\ref{rf2}), (\ref{End1})$_1$, (\ref{End2})$_1$ for $r=2$, we can get
$$
\begin{aligned}
\|L_b\|_{\F L^2(D_4)}\lesssim&\
\|M_1(\p,t)u_0\|_{\F L^2(D_4)}+\|M_2(\p,t)b_0\|_{\F L^2(D_4)}\\
\lesssim&\ \LHH^{-\frac{1}{4}}
(\|u_0\|_{L^1\cap L^2}+\|\langle\na\rangle b_0\|_{L^1_x(L^2_y)})\\
\lesssim&\
\langle t\rangle^{-\frac{1}{4}}\|V_0\|_3.
\end{aligned}
$$
One can get from  (\ref{rf2}), (\ref{End1})$_1$, (\ref{End2})$_1$ for $r=2$,
(\ref{ine2}) and (\ref{ine3}) that
$$
\begin{aligned}
\sum_{i=1,2}\|NL_{bi}\|_{\F L^2(D_4)}
\lesssim&\ \int_0^t
(\|M_1(\p,t-\tau)F^1\|_{\F L^2(D_4)}+
\|M_2(\p,t-\tau)G^{11}\|_{\F L^2(D_4)})d\tau\\
\lesssim&\ \int_0^t(\LH^{-\frac{1}{2}}
\||\na|^{-1}F^1\|_{L^2}
+\LH^{-\frac{1}{4}}\|\langle\na\rangle^{0.51}G^{11}\|_{L^1_x(L^2_y)}
)d\tau\\
\lesssim&\ \int_0^t \LH^{-\frac{1}{2}}\|(\U\otimes\U,\B\otimes\B)\|_{L^2}d\tau\\
&+\int_0^t \LH^{-\frac{1}{4}}
\|\langle\na\rangle^{0.51}(u\p_xb,\B\cd\na u)\|_{L^1_x(L^2_y)}d\tau\\
 \lesssim&\ \int_0^t
 (\LH^{-\frac{1}{2}}\langle \tau\rangle^{-\frac{3}{4}}
 +\LH^{-\frac{1}{4}}\langle \tau\rangle^{-1.01})d\tau
 \|V\|_3^2\\
 \lesssim&\ \langle t\rangle^{-\frac{1}{4}}\|V\|_3^2.
\end{aligned}
$$
Thanks to  (\ref{rf2}), (\ref{End2})$_1$ for $r=2$, and (\ref{ine2}), we can obtain
$$
\begin{aligned}
\|NL_{b3}'\|_{\F L^2(D_4)}
\lesssim&\ \int_0^t
\|M_2(\p,t-\tau)(v_{<\langle \tau\rangle^{-8}}\p_yb,v_{>2\langle \tau\rangle^{-0.05}}\p_yb)\|_{\F L^2(D_4)}d\tau\\
\lesssim&\
\int_0^t \LH^{-\frac{1}{4}}
\|\langle \na\rangle^{0.51}(v_{<\langle \tau\rangle^{-8}}\p_yb,v_{>2\langle \tau\rangle^{-0.05}}\p_yb)\|_{L^1_x(L^2_y)}d\tau\\
\lesssim
&\ \int_0^t \LH^{-\frac{1}{4}}
\langle \tau\rangle^{-1.01}d\tau \|V\|_3^2\\
\lesssim&\ \langle t\rangle^{-\frac{1}{4}}\|V\|_3^2,
\end{aligned}
$$
and
$$
\begin{aligned}
\|L_1\|_{\F L^2(D_4)}
\lesssim&\ \|M_2(\p,t)(P_{\backsim1}
(\vec{R'}\cd\B_0)\p_yb_0)\|_{\F L^2(D_4)}
\lesssim\ \langle t\rangle^{-\frac{1}{4}}
\|\langle \na\rangle^{0.51}(P_{\backsim1}
(\vec{R'}\cd\B_0)\p_yb_0)\|_{L^1_x(L^2_y)}\\
\lesssim&\ \langle t\rangle^{-\frac{1}{4}}
\|\langle \na\rangle^{2}(P_{\backsim1}
(\vec{R'}\cd\B_0)\p_yb_0)\|_{L^1}
\lesssim\ \langle t\rangle^{-\frac{1}{4}}
\|P_{\backsim1}
(\vec{R'}\cd\B_0)\|_{H^2}\|\p_yb_0\|_{H^2}\\
\lesssim&\ \langle t\rangle^{-\frac{1}{4}}
\|V_0\|_3^2.
\end{aligned}
$$
It follows from  (\ref{rf2}), (\ref{End1})$_3$ for $r=2$ and (\ref{ine3}) that
$$
\begin{aligned}
\|L_2\|_{\F L^2(D_4)}
\lesssim&\ \int_0^t
\|\p_xM_1(\p,t-\tau)
(P_\backsim(\vec{R'}\cd\B)\p_yb)\|_{\F L^2(D_4)}d\tau\\
\lesssim&\ \int_0^t
\LH^{-1} \|P_\backsim(\vec{R'}\cd\B)\p_yb\|_{L^2}d\tau\\
\lesssim&\ \int_0^t \LH^{-1}\langle \tau\rangle^{-0.6}d\tau
\|V\|_3^2\\
\lesssim&\ \langle t\rangle^{-0.39}\|V\|_3^2.
\end{aligned}
$$
Using (\ref{rf2}), (\ref{End2})$_1$ for $r=2$, (\ref{End2})$_2$ for $r=2$, (\ref{ine3})
and (\ref{ine2}), one can deduce
$$
\begin{aligned}
  \|L_3\|_{\F L^2(D_4)}\lesssim&
  \ \int_0^t \|\p_xM_2(\p,t-\tau) (P_\backsim(\vec{R'}\cd\B)\p_yu)\|_{L^2(D_4)}d\tau\\
  &+\int_0^t \|M_2(\p,t-\tau)(\p_xP_\backsim(\vec{R'}\cd\B)\p_yu)
  \|_{L^2(D_4)}d\tau\\
  \lesssim&\ \int_0^t \LH^{-\frac{1}{2}}
  \|\na (P_\backsim(\vec{R'}\cd\B)\p_yu)\|_{L^2}d\tau\\
  &+\int_0^t \LH^{-\frac{1}{4}}\|\langle \na\rangle^{0.51}
  (\p_xP_\backsim(\vec{R'}\cd\B)\p_yu)\|_{L^1_x(L^2_y)}d\tau\\
  \lesssim&\
  \int_0^t (\LH^{-\frac{1}{2}}\langle \tau\rangle^{-1.1}+\LH^{-\frac{1}{4}}\langle \tau\rangle^{-1.1}
  )d\tau
  \|V\|_3^2\\
\lesssim&
\ \langle t\rangle^{-\frac{1}{4}}\|V\|_3^2.
\end{aligned}
$$
Like the previous arguments, the associated estimate of ``other good parts" on $D_4$  can be bounded by
$C\langle t\rangle^{-\frac{1}{4}}(\|V\|_3^2+\|V\|_3^3)$.
Collecting the above estimate yields
$$\|\widehat{b}\|_{L^2(D_4)}
\lesssim\ \langle t\rangle^{-\frac{1}{4}}(\|V_0\|_3+\|V_0\|_3^2+
\|V\|_3^2+\|V\|_3^3).$$
which
 completes the estimate of (\ref{4.3})$_1$.
\subsection{The estimate of (\ref{4.3})$_2$}

Due to
$$\|\p_xb\|_{H^1}\thickapprox \|\p_x\langle\na\rangle b\|_{L^2} $$
and Plancherel's identity, it is sufficient to bound
$\|\p_x\langle\na\rangle b\|_{\F L^2}$. Thanks to (\ref{000}),
 we only aim at estimating $\|\p_x\langle\na\rangle b\|_{\F L^2(D_4)}$.
 Using (\ref{rf2}), (\ref{End1})$_3$ for $r=2$, (\ref{End2})$_2$ for $r=2$ and $\delta=0.01$, we have
$$
\begin{aligned}
\|\p_x\langle\na\rangle L_b\|_{\F L^2(D_4)}
\lesssim&\
\|\p_x\langle\na\rangle M_1(\p,t)u_0\|_{\F L^2(D_4)}
+\|\p_x\langle\na\rangle M_2(\p,t)b_0\|_{\F L^2(D_4)}\\
\lesssim&\ \LHH^{-1}\|u_0\|_{H^1}+\LHH^{-\frac{3}{4}}
\|\langle \na\rangle^{2.51} b_0\|_{L^1_x(L^2_y)}\\
\lesssim&\ \langle t\rangle^{-\frac{3}{4}}\|V_0\|_3.
\end{aligned}
$$
We can deduce from (\ref{rf2}), (\ref{End1})$_3$ for $r=2$, and (\ref{ine3}) that
$$
\begin{aligned}
\|\p_x\langle\na\rangle NL_{b1}\|_{\F L^2(D_4)}
\lesssim&\ \int_0^t
\|\p_xM_1(\p,t-\tau)
\langle\na\rangle F^1\|_{\F L^2(D_4)}d\tau\\
\lesssim&\ \int_0^t \LH^{-1}\|\langle\na\rangle(\U\cd\na \U,\B\cd\na \B)\|_{L^2}d\tau\\
\lesssim&\ \int_0^t \LH^{-1}\langle \tau\rangle^{-1.01}d\tau\|V\|_3^2
\lesssim\ \langle t\rangle^{-1}\|V\|_3^2.
\end{aligned}
$$
Applying (\ref{rf2}), (\ref{End2})$_2$ for $r=2$ and $\delta=0.01$, and (\ref{ine2}), we get
$$
\begin{aligned}
&\ \ \ \|\p_x\langle\na\rangle NL_{b2}\|_{\F L^2(D_4)}
+\|\p_x\langle\na\rangle NL_{b3}'\|_{\F L^2(D_4)}\\
\lesssim&\ \int_0^t
\|\p_xM_2(\p,t-\tau) \langle\na\rangle
(G^{11},v_{<\langle \tau\rangle^{-8}}\p_yb,v_{>2\langle \tau\rangle^{-0.05}}\p_yb)\|_{\F L^2(D_4)}d\tau\\
\lesssim&\ \int_0^t \LH^{-\frac{3}{4}}
\|\langle\na\rangle^{2.51}(u\p_xb,\B\cd\na u,v_{<\langle \tau\rangle^{-8}}\p_yb,v_{>2\langle \tau\rangle^{-0.05}}\p_yb)\|_{L^1_x(L^2_y)}d\tau\\
\lesssim&\ \int_0^t
\LH^{-\frac{3}{4}}\langle \tau\rangle^{-1.01}d\tau
\|V\|_3^2
\lesssim\ \langle t\rangle^{-\frac{3}{4}}\|V\|_3^2.
\end{aligned}
$$
%%%%%%%%%%%
Similarly, by (\ref{rf2}), (\ref{End2})$_2$ for $r=2$ and $\delta=0.01$, one has
$$
\begin{aligned}
\|\p_x\langle\na\rangle L_1\|_{\F L^2(D_4)}
\lesssim&\ \|\p_x\langle\na\rangle M_2(\p,t)
(P_{\backsim1}
(\vec{R'}\cd\B_0)\p_yb_0)\|_{\F L^2(D_4)}\\
\lesssim&\ \langle t\rangle^{-\frac{3}{4}} \|\langle\na\rangle^{2.51}(P_{\backsim1}
(\vec{R'}\cd\B_0)\p_yb_0)\|_{L^1_x(L^2_y)}\\
\lesssim&\ \langle t\rangle^{-\frac{3}{4}}\|\langle\na\rangle^{4}(P_{\backsim1}
(\vec{R'}\cd\B_0)\p_yb_0)\|_{L^1}\\
\lesssim&\ \langle t\rangle^{-\frac{3}{4}}\|P_{\backsim1}
(\vec{R'}\cd\B_0)\|_{H^4}\|\p_yb_0\|_{H^4}\\
\lesssim&\ \langle t\rangle^{-\frac{3}{4}}\|V_0\|_3^2.
\end{aligned}
$$
By using (\ref{rf2}), (\ref{End1})$_3$ for $r=2$, (\ref{ine3}), we obtain,
$$
\begin{aligned}
\|\p_x\langle\na\rangle L_2\|_{\F L^2(D_4)}
\lesssim&\ \int_0^t
\|\p_xM_1(\p,t-\tau)
\langle\na\rangle\p_x(P_\backsim(\vec{R'}\cd\B)\p_yb)\|_{\F L^2(D_4)}d\tau\\
\lesssim&\ \int_0^t
\LH^{-1} \|\langle\na\rangle\p_x(P_\backsim(\vec{R'}\cd\B)\p_yb)\|_{L^2}d\tau\\
\lesssim&\ \int_0^t \LH^{-1}\langle \tau\rangle^{-1.01}d\tau
\|V\|_3^2\\
\lesssim&\ \langle t\rangle^{-1}\|V\|_3^2.
\end{aligned}
$$
One deduces from (\ref{rf2}), (\ref{End2})$_3$, (\ref{End2})$_2$ for $r=2$  and $\delta=0.01$ that
$$
\begin{aligned}
  \|\p_x\langle\na\rangle L_3\|_{\F L^2(D_4)}\lesssim&
  \ \int_0^t \|\p_x^2M_2(\p,t-\tau) \langle\na\rangle (P_\backsim(\vec{R'}\cd\B)\p_yu)\|_{\F L^2(D_4)}\\
  &+\int_0^t \| \p_xM_2(\p,t-\tau)\langle\na\rangle (\p_xP_\backsim(\vec{R'}\cd\B)\p_yu)
  \|_{\F L^2(D_4)}d\tau\\
  \lesssim&\ \int_0^t \LH^{-1}
  \|\langle\na\rangle^3 \Big(P_\backsim(\vec{R'}\cd\B)\p_yu\Big)\|_{L^2}d\tau\\
  &+\int_0^t \LH^{-\frac{3}{4}}\|\langle \na\rangle^{2.51}
  \Big(\p_xP_\backsim(\vec{R'}\cd\B)\p_yu\Big)\|_{L^1_x(L^2_y)}d\tau\\
  \lesssim&\
  \int_0^t (\LH^{-1}\langle \tau\rangle^{-0.9}+\LH^{-\frac{3}{4}}\langle \tau\rangle^{-1.01})d\tau
  \|V\|_3^2\\
  \lesssim&
\ \langle t\rangle^{-\frac{3}{4}}\|V\|_3^2.
\end{aligned}
$$
``Other good parts" on $D_4$ can be bounded by $C\langle t\rangle^{-\frac{3}{4}}(\|V\|_3^2+\|V\|_3^3)$. Hence, there holds
$$\|\p_x\langle\na\rangle b\|_{\F L^2(D_4)}
\lesssim\ \LHH^{-\frac{3}{4}}
(\|V_0\|_3+\|V_0\|_3^2+\|V\|_3^2+\|
V\|_3^3).$$
\subsection{The estimate of (\ref{4.3})$_3$}
Using $|\xe|\lesssim1$ in $D_1$, one can get
$$\||\na|^{-1}\langle\na\rangle b\|_{\F L^1(D_1)}
\lesssim\ \||\na|^{-1} b\|_{\F L^1(D_1)}.
$$
Thanks to (\ref{rf1}) and (\ref{decay1})$_5$ for $\alpha=1$, we have
$$\||\na|^{-1}L_b\|_{\F L^1(D_1)}
\lesssim\
\||\na|^{-1}\big(M_1(\p,t)u_0,M_2(\p,t)b_0\big)\|_{\F L^1(D_1)}
\lesssim\ \LHH^{-\frac{1}{2}}
\|(u_0,b_0)\|_{L^1}.
$$
Applying (\ref{rf1}), (\ref{decay1})$_5$ for $\alpha=1$, (\ref{bbfen}), we have
$$
\begin{aligned}
 \||\na|^{-1}\mathcal{NL}_b\|_{\F L^1(D_1)}
  \lesssim&\ \int_0^t \||\na|^{-1}(M_1(\p,t-\tau)F^1)\|_{\F L^1(D_1)}d\tau\\
  &+\int_0^t \||\na|^{-1}(M_2(\p,t-\tau)G^1)\|_{\F L^1(D_1)}d\tau\\
  \lesssim&\ \int_0^t \|M_1(\p,t-\tau)(\U\otimes\U,\B B)\|_{\F L^1(D_1)}d\tau\\
  &+\int_0^t \||\na|^{-1}M_1(\p,t-\tau)(b\p_xb)\|_{\F L^1(D_1)}d\tau\\
  &+\int_0^t \|M_2(\p,t-\tau)(\U\otimes\B)\|_{\F L^1(D_1)}d\tau\\
 \end{aligned}
$$
Due to  (\ref{decay1})$_3$ for $k=0$, (\ref{decay1})$_5$ for $\alpha=1/2$, (\ref{ine1}) and (\ref{ine3}),
the first and third  integral can be bounded by
$$
\begin{aligned}
&\ \ C\int_0^t \LH^{-\frac{1}{2}}\|(\U\otimes\U,\U\otimes\B,\B B)\|_{L^2}d\tau\\
\lesssim&\ \int_0^t \LH^{-\frac{1}{2}}
\langle\tau\rangle^{-1.1}d\tau\|V\|_3^2
\lesssim\ \LHH^{-\frac{1}{2}}\|V\|_3^2,
\end{aligned}
$$
while the second integral can be bounded by
$$
\begin{aligned}
&\ \ C\int_0^t\||\na|^{-\frac{1}{2}}M_1(\p,t-\tau)|\p_x|^\frac{1}{2}(bb)
\|_{\F L^1(D_1)}d\tau\\
\lesssim&
\ \int_0^t \LH^{-\frac{3}{4}}
\||\p_x|^\frac{1}{2}(bb)\|_{L^1}d\tau\\
\lesssim&\  \int_0^t \LH^{-\frac{3}{4}}
\langle\tau\rangle^{-\frac{3}{4}}d\tau\|V\|_3^2
\lesssim\ \LHH^{-\frac{1}{2}}\|V\|_3^2.
\end{aligned}
$$
So
\begin{equation}\label{010}
\||\na|^{-1}\langle\na\rangle b\|_{\F L^1(D_1)}
\lesssim\ \LHH^{-\frac{1}{2}}(\|V_0\|_3+\|V\|_3^2).
\end{equation}
Using
$$\||\na|^{-1}\langle\na\rangle b\|_{\F L^1(D_4)}
\lesssim\ \| b\|_{\F L^1(D_{41})}+\||\na|^{-1} b\|_{\F L^1(D_{42})},
$$
is is sufficient to show  the estimates of the terms on the right hand side.
Using (\ref{rf2}), (\ref{End1})$_1$ and (\ref{End2})$_1$ for $r=1$, one can easily get
$$
\|L_b\|_{\F L^1(D_{41})}
\lesssim
\ \langle t\rangle^{-\frac{1}{2}}\|V_0\|_3.
$$
One can infer by (\ref{rf2}), (\ref{bbfen}), (\ref{End1})$_2$  for $r=1$ and (\ref{End1})$_3$ for $r=1$, (\ref{ine4}) that
$$
\begin{aligned}
\|NL_{b1}\|_{\F L^1(D_{41})}
\lesssim&\
\int_0^t \|M_1(\p,t-\tau)F^1\|_{\F L^1(D_{41})}d\tau\\
\lesssim&\ \int_0^t \|M_1(\p,t-\tau)(\U\cd\na\U,\B\cd\na B,\p_y(Bb))\|_{\F L^1(D_{41})}d\tau\\
&+\int_0^t \|\p_xM_1(\p,t-\tau)(bb)\|_{\F L^1(D_{41})}d\tau\\
\lesssim&\ \int_0^t (\LH^{-\frac{1}{2}}\|(\U\otimes\U,B\B)\|_{\F L^1}+\LH^{-1}\|bb\|_{\F L^1})d\tau\|V\|_3^2\\
\lesssim&\ \int_0^t (\LH^{-\frac{1}{2}}\langle \tau\rangle^{-\frac{3}{2}}+\LH^{-1}\langle \tau\rangle^{-1})d\tau\|V\|_3^2\\
\lesssim&\ \langle t\rangle^{-\frac{1}{2}}\|V\|_3^2.
\end{aligned}
$$
Applying (\ref{rf2}),  (\ref{End2})$_1$ for $r=1$ and  (\ref{ine2}), we can get
$$
\begin{aligned}
&\ \ \ \|NL_{b2}\|_{\F L^1(D_{41})}+\|NL_{b3}'\|_{\F L^1(D_{41})}\\
\lesssim&\ \int_0^t \|M_2(\p,t-\tau)\{u\p_xb,\B\cd\na u,v_{<\langle\tau\rangle^{-8}}
\p_yb,v_{>2\langle\tau\rangle^{-0.05}}
\p_yb\}\|_{\F L^1(D_{41})}d\tau\\
\lesssim&\ \int_0^t
\LH^{-\frac{1}{2}}
\|\langle\na\rangle^{1.51}(u\p_xb,\B\cd\na u,v_{<\langle\tau\rangle^{-8}}
\p_yb,v_{>2\langle\tau\rangle^{-0.05}}
\p_yb)\|_{L^1_x(L^2_y)}d\tau\\
\lesssim&\ \int_0^t
\LH^{-\frac{1}{2}}\langle \tau\rangle^{-1.01}d\tau\|V\|_3^2
\lesssim\ \langle t\rangle^{-\frac{1}{2}}\|V\|_3^2.
\end{aligned}
$$
Using (\ref{rf2}) and (\ref{End2})$_1$ again, it is easy to get
$$
\|L_1\|_{\F L^1(D_{41})}
\lesssim\
\|M_2(\p,t)
(P_{\backsim1}(\vec{R'}\cd\B_0)\p_yb_0)\|_{\F L^1(D_4)}
\lesssim\ \langle t\rangle^{-\frac{1}{2}}\|V_0\|_3^2.
$$
Using  (\ref{rf2}),  (\ref{End1})$_3$ and (\ref{ine4}), we can get
$$
\begin{aligned}
\|L_2\|_{\F L^1(D_{41})}
\lesssim&\ \int_0^t
\|\p_xM_1(\p,t-\tau)(P_\backsim(\vec{R'}\cd\B)\p_yb)
\|_{\F L^1(D_{41})}d\tau\\
\lesssim&\ \int_0^t \LH^{-1}
\|P_\backsim(\vec{R'}\cd\B)\|_{\F L^1}\|\p_yb\|_{\F L^1}d\tau\\
\lesssim&\  \int_0^t \LH^{-1}\langle\tau\rangle^{-0.9}d\tau\|V\|_3^2\\
\lesssim&\ \langle t\rangle^{-\frac{1}{2}}\|V\|_3^2.
\end{aligned}
$$
  By (\ref{rf2}),  (\ref{End2})$_2$ for $r=1$, (\ref{End2})$_1$ for $r=1$,   (\ref{ine2}) and (\ref{ine4}), it follows
$$
\begin{aligned}
\|L_3\|_{\F L^1(D_{41})}
\lesssim&\ \int_0^t \|\p_xM_2(\p,t-\tau)
(P_\backsim(\vec{R'}\cd\B)\p_y u)\|_{\F L^1(D_{41})}d\tau\\
&+\int_0^t \|M_2(\p,t-\tau)
(\p_x P_\backsim(\vec{R'}\cd\B)\p_y u)\|_{\F L^1(D_{41})}d\tau\\
\lesssim&\ \int_0^t \LH^{-\frac{1}{2}}
\|\na(P_\backsim(\vec{R'}\cd\B)\p_y u)\|_{\F L^1}d\tau\\
&+\int_0^t \LH^{-\frac{1}{2}}
\|\langle\na\rangle^{1.51}(\p_xP_\backsim(\vec{R'}\cd\B)\p_y u)\|_{L^1_x(L^2_y)}d\tau\\
\lesssim&\ \int_0^t \LH^{-\frac{1}{2}}
\langle\tau\rangle^{-1.01}d\tau\|V\|_3^2
\lesssim \ \langle t\rangle^{-\frac{1}{2}}
\|V\|_3^2.
\end{aligned}
$$
 ``Other good parts" can be bounded by
by $C\langle t\rangle^{-\frac{1}{2}}(\|V\|_3^2+\|V\|_3^3)$.
Finally,
\begin{equation}\label{bb1}
\|b\|_{\F L^1(D_{41})}
\lesssim\ \LHH^{-\frac{1}{2}}(\|V_0\|_3+\|V_0\|_3^2
+\|V\|_3^2+\|V\|_3^3).
\end{equation}
By (\ref{rf2}),  (\ref{End1})$_5$,  (\ref{End2})$_4$, we have
$$
\begin{aligned}
\||\na|^{-1}L_b\|_{\F L^1(D_{42})}
\lesssim&\ \||\na|^{-1} M_1(\p,t)u_0
\|_{\F L^1(D_{42})}+\||\na|^{-1}M_2(\p,t)b_0
\|_{\F L^1(D_{42})}\\
\lesssim&\ \LHH^{-\frac{1}{2}}(\|(u_0,b_0)\|_{L^1}
+\|u_0\|_{L^1_y(L^\frac{4}{3}_x)}).
\end{aligned}
$$
It follows from  (\ref{rf2}), (\ref{bbfen}),  (\ref{End1})$_5$ for $\delta=0.01$, and (\ref{ine1}) that
$$
\begin{aligned}
\||\na|^{-1}NL_{b1}\|_{\F L^1(D_{42})}
\lesssim&\ \int_0^t
\||\na|^{-1}M_1(\p,t-\tau)(\U\cd\na\U,\B\cd\na B,\p_y(B\B))\|_{\F L^1(D_{42})}d\tau\\
&+\int_0^t \||\na|^{-1}M_1(\p,t-\tau)(b\p_xb)\|_{\F L^1(D_{42})}d\tau\\
\lesssim&\ \int_0^t
\LH^{-1}(\||\na|^{0.99}(\U\otimes\U,\B B)\|_{L^1}
+\||\p_x|^{0.99}(bb)\|_{L^1})d\tau\\
\lesssim&\ \int_0^t
\LH^{-1}\langle \tau\rangle^{-0.6}d\tau\|V\|_3^2
\lesssim\ \LHH^{-\frac{1}{2}}\|V\|_3^2.
\end{aligned}
$$
By (\ref{rf2}),  (\ref{End2})$_4$, (\ref{End2})$_1$ for $r=1$ and $\delta=0.01$,    (\ref{ine1}) and (\ref{ine2}), we infer
$$
\begin{aligned}
&\ \ \ \||\na|^{-1}NL_{b2}\|_{\F L^1(D_{42})}
+\||\na|^{-1}NL_{b3}'\|_{\F L^1(D_{42})}\\
\lesssim&\ \int_0^t
\||\na|^{-1}M_2(\p,t-\tau)(u\p_xb,\B\cd\na u,
\p_yv_{<\langle \tau\rangle^{-8}}b,\p_yv_{>\langle\tau\rangle^{-0.05}}b)\|_{\F L^1(D_{42})}d\tau\\
&+\int_0^t
\|M_2(\p,t-\tau)(v_{<\langle \tau\rangle^{-8}}b,v_{>\langle\tau\rangle^{-0.05}}b)\|_{\F L^1(D_{42})}d\tau\\
\lesssim&\ \int_0^t\LH^{-\frac{1}{2}}
\|(u\p_xb,\B\cd\na u,\p_yv_{<\langle \tau\rangle^{-8}}b,\p_yv_{>\langle\tau\rangle^{-0.05}}
b)\|_{L^1}d\tau\\
&+\int_0^t \LH^{-\frac{1}{2}}\|\langle\na\rangle^{1.51}(v_{<\langle \tau\rangle^{-8}}b,v_{>\langle\tau\rangle^{-0.05}}b)
\|_{L^1_x(L^2_y)}d\tau\\
\lesssim&\ \int_0^t\LH^{-\frac{1}{2}}\langle \tau\rangle^{-1.01}
d\tau\|V\|_3^2
\lesssim\ \LHH^{-\frac{1}{2}}\|V\|_3^2.
\end{aligned}
$$
Similarly, we can get by using (\ref{End2})$_4$ that
$$\||\na|^{-1}L_1\|_{\F L^1(D_{42})}
\lesssim\ \LHH^{-\frac{1}{2}}\|P_{\backsim1}(\vec{R'}\cd\B_0)\p_yb_0\|_{L^1}
\lesssim\ \LHH^{-\frac{1}{2}}\|V_0\|_3^2.$$
By (\ref{rf2}),  (\ref{End1})$_6$, (\ref{End1})$_8$, (\ref{ine1}) and (\ref{ine4}), we achieve
$$
\begin{aligned}
\||\na|^{-1}L_2\|_{\F L^1(D_{42})}
\lesssim&\ \int_0^t
\||\na|^{-1}\p_x\p_yM_1(\p,t-\tau)(
P_{\backsim}(\vec{R'}\cd\B)b)\|_{\F L^1(D_{42})}d\tau\\
&+\int_0^t
\||\na|^{-1}\p_xM_1(\p,t-\tau)(\p_y
P_{\backsim}(\vec{R'}\cd\B)b)\|_{\F L^1(D_{42})}d\tau\\
\lesssim&\ \int_0^t \LH^{-1}\|P_{\backsim}(\vec{R'}\cd\B)b\|_{\F L^1}d\tau
+\int_0^t \LH^{-\frac{3}{2}}\|\p_yP_{\backsim}(\vec{R'}\cd\B)b\|_{ L^1}d\tau\\
\lesssim&\ \int_0^t
(\LH^{-1}\langle \tau\rangle^{-0.99}
+\LH^{-\frac{3}{2}}\langle \tau\rangle^{-\frac{1}{2}})
d\tau\|V\|_3^2\\
\lesssim&\ \LHH^{-\frac{1}{2}}\|V\|_3^2.
\end{aligned}
$$
We can obtain from  (\ref{rf2}),  (\ref{End2})$_4$, (\ref{End2})$_5$, (\ref{ine1}) and (\ref{ine4}) that
$$
\begin{aligned}
\||\na|^{-1}L_3\|_{\F L^1(D_{42})}
\lesssim&\ \int_0^t \||\na|^{-1}\p_xM_2(\p,t-\tau)(
P_{\backsim}(\vec{R'}\cd\B)\p_yu)\|_{\F L^1(D_{42})}d\tau\\
&+\int_0^t \||\na|^{-1}M_2(\p,t-\tau)(\p_x
P_{\backsim}(\vec{R'}\cd\B)\p_yu)\|_{\F L^1(D_{42})}d\tau\\
\lesssim&\ \int_0^t
\LH^{-\frac{1}{2}}
(\|P_{\backsim}(\vec{R'}\cd\B)\p_yu\|_{\F L^1}
+\|\p_x
P_{\backsim}(\vec{R'}\cd\B)\p_yu\|_{L^1})d\tau\\
\lesssim&\ \int_0^t
\LH^{-\frac{1}{2}}\langle \tau\rangle^{-1.01}d\tau
\|V\|_3^2
\lesssim\ \LHH^{-\frac{1}{2}}\|V\|_3^2.
\end{aligned}
$$
Hence, it comes out
\begin{equation}\label{bb2}
\||\na|^{-1}b\|_{\F L^1(D_{42})}
\lesssim\ \LHH^{-\frac{1}{2}}
(\|V_0\|_3+\|V_0\|_3^2+\|V\|_3^2+\|V\|_3^3).
\end{equation}
Combining with (\ref{010}), (\ref{bb1}) and (\ref{bb2}) follows (\ref{4.3})$_3$.
\subsection{The estimate of (\ref{4.3})$_4$}
Direct computations yield
\begin{equation}\label{bb3}
\|\mathcal{R}_1\langle\na\rangle b\|_{\F L^1(D_1)}\lesssim\ \||\na|^{-1}\langle\na\rangle b\|_{\F L^1(D_1)}\lesssim\
\langle t\rangle^{-1}(\|V_0\|_3+\|V\|_3^2),
\end{equation}
So it suffices to bounding  $\|\mathcal{R}_1\langle\na\rangle b\|_{\F L^1(D_4)}$.
By (\ref{rf2}), (\ref{End1})$_4$, and (\ref{End2})$_5$ for $l=1$, we have
$$
\begin{aligned}
\|\mathcal{R}_1\langle\na\rangle L_b\|_{\F L^1(D_4)}
\lesssim&\
\|\mathcal{R}_1\langle\na\rangle M_1(\p,t)u_0
\|_{\F L^1(D_4)}+\|\mathcal{R}_1\langle\na\rangle M_2(\p,t)b_0
\|_{\F L^1(D_4)}\\
\lesssim&\ \||\na|\langle\na\rangle M_1(\p,t)u_0
\|_{\F L^1(D_4)}+\|\mathcal{R}_1\langle\na\rangle M_2(\p,t)b_0
\|_{\F L^1(D_4)}\\
\lesssim&\ \langle t\rangle^{-1}
(\|\langle\na\rangle^4(u_0,b_0)\|_{L^1_x(L^2_y)}
+\|\langle\na\rangle^2 u_0\|_{L^1})\\
\lesssim&\ \langle t\rangle^{-1}\|V_0\|_3.
\end{aligned}
$$
It follows by (\ref{rf2}), (\ref{End1})$_8$, (\ref{End1})$_9$ and  (\ref{ine4})  that
$$
\begin{aligned}
\|\mathcal{R}_1\langle \na\rangle NL_{b1}\|_{\F L^1(D_4)}
\lesssim&\ \int_0^t \|\mathcal{R}_1\langle \na\rangle M_1(\p,t-\tau)
(\U\cd\na\U,\B\cd\na B,\p_y(Bb))\|_{\F L^1(D_4)}d\tau\\
&+\int_0^t\|\p_x\mathcal{R}_1 M_1(\p,t-\tau) (bb) \|_{\F L^1(D_4)}d\tau\\
&+\int_0^t\||\na|\mathcal{R}_1 M_1(\p,t-\tau) \p_x(bb) \|_{\F L^1(D_4)}d\tau\\
\lesssim&\ \int_0^t \LH^{-1}
\||\na|^{-1}\langle \na\rangle(\U\cd\na\U,\B\cd\na B,\p_y(Bb))\|_{\F L^1(D_4)}d\tau\\
&+\int_0^t (\LH^{-\frac{3}{2}}\|bb\|_{\F L^1}
+\LH^{-1}\|b\p_xb\|_{\F L^1})d\tau\\
\lesssim&\ \int_0^t (\LH^{-1}
\langle \tau\rangle^{-1.1}+\LH^{-\frac{3}{2}}
\langle \tau\rangle^{-1})d\tau\|V\|_3^2\\
\lesssim&\ \langle t\rangle^{-1}\|V\|_3^2.
\end{aligned}
$$
Using  (\ref{rf2}), (\ref{End2})$_5$ for $l=1$, (\ref{ine2}), one has
$$
\begin{aligned}
&\ \ \ \|\mathcal{R}_1\langle \na\rangle NL_{b2}\|_{\F L^1(D_4)}
+\|\mathcal{R}_1\langle\na\rangle NL_{b3}'\|_{\F L^1(D_4)}\\
\lesssim&\ \int_0^t \|\mathcal{R}_1\langle\na\rangle M_2(\p,t-\tau)(u\p_xb,\B\cd\na u,v_{<\langle\tau\rangle^{-8}}
\p_yb,v_{>2\langle\tau\rangle^{-0.05}}
\p_yb)\|_{\F L^1(D_4)}d\tau\\
\lesssim&\ \int_0^t \LH^{-1}
\|\langle \na\rangle^{2.51}(u\p_xb,\B\cd\na u,v_{<\langle\tau\rangle^{-8}}
\p_yb,v_{>2\langle\tau\rangle^{-0.05}}
\p_yb)\|_{L^1_x(L^2_y)}d\tau\\
\lesssim&\ \int_0^t \LH^{-1}
\langle \tau\rangle^{-1.01}d\tau\|V\|_3^2
\lesssim\ \langle t\rangle^{-1}\|V\|_3^2.
\end{aligned}
$$
Thanks to  (\ref{rf2}),  (\ref{End2})$_5$ for $l=1$, we deduce
$$
\|\mathcal{R}_1\langle\na\rangle L_1\|_{\F L^1(D_4)}
\lesssim\
\|\mathcal{R}_1M_2(\p,t)\langle\na\rangle(P_{\backsim1}(\vec{R'}\cd\B_0)\p_y
b_0)\|_{\F L^1(D_4)}
\lesssim\ \langle t\rangle^{-1}\|V_0\|_3^2.
$$
 Using (\ref{rf2}), (\ref{End1})$_8$,  (\ref{End1})$_9$ and (\ref{ine4}), we achieve
$$
\begin{aligned}
\|\mathcal{R}_1\langle\na\rangle L_2\|_{\F L^1(D_4)}
\lesssim&\ \int_0^t
\|\p_y\mathcal{R}_1\langle\na\rangle M_1(\p,t-\tau) \p_x\big(P_\backsim(\vec{R'}\cd\B)bb\big)
\|_{\F L^1(D_4)}d\tau\\
&+\int_0^t
\|\p_x\mathcal{R}_1\langle\na\rangle M_1(\p,t-\tau) (\p_yP_\backsim(\vec{R'}\cd\B)b)\|_{\F L^1(D_4)}d\tau\\
\lesssim&\ \int_0^t (\LH^{-1} \|\Big[\p_x\langle\na\rangle(P_\backsim(\vec{R'}\cd\B)b),
\p_x(\p_y P_\backsim(\vec{R'}\cd\B)b)\Big]\|_{\F L^1}\\
&+\LH^{-\frac{3}{2}}
\|\p_y P_\backsim(\vec{R'}\cd\B)b\|_{\F L^1})d\tau\\
\lesssim&\ \int_0^t (\LH^{-1} \langle\tau\rangle^{-1.01}
+\LH^{-\frac{3}{2}} \langle\tau\rangle^{-1})d\tau \|V\|_3^2\\
\lesssim&\ \langle t\rangle^{-1}\|V\|_3^2,
\end{aligned}
$$
where we have used $\|\langle\na\rangle f\|_{\F L^1}
\lesssim\ \|f\|_{\F L^1}+\||\na|f\|_{\F L^1}$ for the second inequality.
We infer from  (\ref{rf2}), (\ref{End2})$_6$, (\ref{End2})$_5$ for $l=1$,
and (\ref{ine4}), (\ref{ine2}) that
$$
\begin{aligned}
\|\mathcal{R}_1\langle\na\rangle L_3\|_{\F L^1(D_4)}
\lesssim&\ \int_0^t \|\p_x\mathcal{R}_1\langle\na\rangle M_2(\p,t-\tau)(P_\backsim(\vec{R'}\cd\B)\p_y u)\|_{\F L^1(D_4)}d\tau\\
&+\int_0^t \|\mathcal{R}_1\langle\na\rangle M_2(\p,t-\tau) (\p_xP_\backsim(\vec{R'}\cd\B)\p_y u)\|_{\F L^1(D_4)}d\tau\\
\lesssim&\
\int_0^t \LH^{-1}\|\langle \na\rangle^2(P_\backsim(\vec{R'}\cd\B)\p_y u)\|_{\F L^1}d\tau\\
&+\int_0^t \LH^{-1}\|\langle \na\rangle^{2.51}(\p_xP_\backsim(\vec{R'}\cd\B)\p_y u)\|_{L^1_x(L^2_y)}d\tau\\
\lesssim&\ \int_0^t \langle t-\tau\rangle^{-1}
\langle\tau\rangle^{-1.01}d\tau\|V\|_3^2
\lesssim\ \langle t\rangle^{-1}\|V\|_3^2.
\end{aligned}
$$
We can bound ``other good parts" by
$C\langle t\rangle^{-1}(\|V\|_3^2+\|V\|_3^3)$. Thus
\begin{equation*}
\|\mathcal{R}_1\langle\na\rangle b\|_{\F L^1(D_4)}\lesssim\
\langle t\rangle^{-1}(\|V_0\|_3+\|V_0\|_3^2+\|V\|_3^2+\|V\|_3^3),
\end{equation*}
which,
together with (\ref{bb3})
leads to (\ref{4.3})$_4$.
\section{The  estimate on $B$ and proof of (\ref{endend})}
\label{s4}
\vskip.2in
In this section, we will prove
\begin{equation}\label{4.4}
\left\{
\begin{aligned}
\|B(t)\|_{L^2}\lesssim\ &\langle t\rangle^{-\frac{1}{2}}
\big(\|V_0\|_3+\|V\|_3^2\big);\\
\|\p_xB(t)\|_{L^2}\lesssim\ &\langle t\rangle^{-1}
\big(\|V_0\|_3+\|V\|_3^2\big);\\
\|B(t)\|_{FL^1}\lesssim\ &\langle t\rangle^{-1}
\big(\|V_0\|_3+\|V_0\|_3^2+\|V\|_3^2+\|V\|_3^3\big).
\end{aligned}
\right.
\end{equation}
\subsection{The expression of $B$}
Thanks to (\ref{B}), we can get
$$
B=\underbrace{M_{1}(\p,t)v_0+M_{2}(\p,t)B_0}_{L_B}
+\underbrace{\int_0^t M_{1}(\p,t-\tau)F^2d\tau}_{L_{B1}}
+\underbrace{\int_0^t M_{2}(\p,t-\tau)G^2d\tau}_{L_{B2}}.
$$
\subsection{The estimate  of (\ref{4.4})$_1$}
Like the estimate of $\|\widehat{v}\|_{L^2(D_1)}$, we can also get
\begin{equation}\label{BB1}
\|\widehat{B}\|_{L^2(D_1)}\lesssim\ \langle t\rangle^{-\frac{1}{2}}(\|V_0\|_3+\|V\|_3^2).
\end{equation}
Using (\ref{rf2}), (\ref{End1})$_1$, (\ref{D21})$_1$ for $k=1$,
and
$B_0=-\mathcal{R}_{11}B_0+\mathcal{R}_{12}b_0$, we have
$$
\begin{aligned}
\|L_B\|_{\F L^2(D_4)}
\lesssim&\ \|M_1(\p,t)v_0\|_{\F L^2(D_4)}
+\|\mathcal{R}_1M_2(\p,t)(B_0,b_0)\|_{\F L^2(D_4)}\\
\lesssim&\ \langle t\rangle^{-\frac{1}{2}}\|v_0\|_{L^1\cap L^2}
+\|\G_{1,1}e^{-\G_{2,2}t}\F\{\B_0\}\|_{L^2(D_4)}\\
\lesssim&\ \langle t\rangle^{-\frac{1}{2}}\|V_0\|_3.
\end{aligned}
$$
Using (\ref{f2}), (\ref{D21})$_1$ for $k=2$, and (\ref{ine3}), one can obtain
$$
\begin{aligned}
\|NL_{B1}\|_{\F L^2(D_4)}
\lesssim&\ \int_0^t\||\na|\mathcal{R}_1M_1(\p,t-\tau)(\B\otimes\B,\U\otimes\U
)\|_{\F L^2(D_4)}d\tau\\
\lesssim&\ \int_0^t\|\G_{2,2}e^{-\G_{2,2}(t-\tau)}
\F\{\B\otimes\B,\U\otimes\U\}\|_{L^2(D_4)}d\tau\\
\lesssim&\ \int_0^t \LH^{-1}
(\| \B\otimes\B\|_{L^2}+\| \U\otimes\U\|_{L^2})d\tau\\
\lesssim&\ \int_0^t \LH^{-1}
\langle\tau\rangle^{-\frac{3}{4}}d\tau\|V\|_3^2\\
\lesssim&\ \langle t\rangle^{-\frac{1}{2}}\|V\|_3^2.
\end{aligned}
$$
We can deduce from (\ref{g2}), (\ref{End2})$_2$ for $r=2$,  (\ref{g2}) and (\ref{ine3}) that
$$
\begin{aligned}
\|NL_{B2}\|_{\F L^2(D_4)}
\lesssim&\ \int_0^t\|\p_xM_2(\p,t-\tau)(uB-bv)\|_{\F L^2(D_4)}d\tau\\
\lesssim&\ \int_0^t \LH^{-\frac{1}{2}}
(\| \na(uB)\|_{L^2}+\| \na(bv)\|_{L^2})d\tau\\
\lesssim&\ \int_0^t \LH^{-\frac{1}{2}}
\langle\tau\rangle^{-1.01}d\tau\|V\|_3^2\\
\lesssim&\ \langle t\rangle^{-\frac{1}{2}}\|V\|_3^2.
\end{aligned}
$$
As a result, there holds
\begin{equation}\label{BB2}
\|\widehat{B}\|_{L^2(D_4)}
\lesssim\ \langle t\rangle^{-\frac{1}{2}}
(\|V_0\|_3+\|V\|_3^2).
\end{equation}
It follows (\ref{4.4})$_1$ by using (\ref{BB1}) and (\ref{BB2}).
\subsection{The estimate  of (\ref{4.4})$_2$}
As the estimate of $\|\p_x v\|_{L^2}$, it is easy to obtain
\begin{equation}\label{BB3}
\|\p_x B\|_{\F L^2(D_1)}\lesssim
\ \langle t\rangle^{-1}(\|V_0\|_3+\|V\|_3^2).
\end{equation}
Similar to the estimate of   $\|L_B\|_{\F L^2(D_4)}$, by (\ref{rf2}),
(\ref{End1})$_3$ for $r=2$, and (\ref{D21})$_1$ for $k=2$,  we infer
$$
\begin{aligned}
\|\p_xL_B\|_{\F L^2(D_4)}
\lesssim&\
\|\p_xM_1(\p,t)v_0
\|_{\F L^2(D_4)}
+\|\p_xM_2(\p,t)B_0
\|_{\F L^2(D_4)}\\
\lesssim&\ \langle t\rangle^{-1}\|v_0
\|_{ L^2}
+\|\mathcal{R}_1^2|\na|M_2(\p,t)\B_0
\|_{\F L^2(D_4)}\\
\lesssim&\ \langle t\rangle^{-1}\|v_0
\|_{ L^2}
+\|\G_{2,2}e^{-\G_{2,2}t}\F\{|\na| \B_0\}
\|_{L^2(D_4)}\\
\lesssim&\ \langle t\rangle^{-1}\|V_0\|_3.
\end{aligned}
$$
It follows by (\ref{rf2}),
(\ref{End1})$_3$ for $r=2$, and (\ref{ine3}) that
$$
\begin{aligned}
\|\p_xNL_{B1}\|_{\F L^2(D_4)}
\lesssim&\ \int_0^t\|\p_xM_1(\p,t-\tau)(\B\cd\na\B,\U\cd\na\U
)\|_{\F L^2(D_4)}d\tau\\
\lesssim&\ \int_0^t \LH^{-1}
(\| \B\cd\na\B\|_{L^2}+\| \U\cd\na\U\|_{L^2})d\tau\\
\lesssim&\ \int_0^t \LH^{-1}
\langle\tau\rangle^{-1.1}d\tau\|V\|_3^2\\
\lesssim&\ \langle t\rangle^{-1}\|V\|_3^2.
\end{aligned}
$$
Using (\ref{g2}), (\ref{rf2}), (\ref{End2})$_3$, and (\ref{ine3}), one has
$$
\begin{aligned}
\|\p_xNL_{B2}\|_{\F L^2(D_4)}
\lesssim&\ \int_0^t\|\p_x^2M_2(\p,t-\tau)(uB-bv)\|_{\F L^2(D_4)}d\tau\\
\lesssim&\ \int_0^t \LH^{-1}
(\| \na^2(uB)\|_{L^2}+\| \na^2(bv)\|_{L^2})d\tau\\
\lesssim&\ \int_0^t \LH^{-1}
\langle\tau\rangle^{-1.01}d\tau\|V\|_3^2\\
\lesssim&\ \langle t\rangle^{-1}\|V\|_3^2.
\end{aligned}
$$
Hence, we have
\begin{equation}\label{BB4}
\|\p_x B\|_{\F L^2(D_4)}
\lesssim\ \langle t\rangle^{-1}(\|V_0\|_3+\|V\|_3^2).
\end{equation}
Combining with (\ref{BB3}) and (\ref{BB4}) implies (\ref{4.4})$_2$.
\subsection{The estimate  of (\ref{4.4})$_3$}
\vskip .2in
Like the previous way dealing with $\|\widehat{B}\|_{L^1(D_1)}$, one can get
\begin{equation}\label{BB7}
\|B\|_{\F L^1(D_1)}\lesssim\ \langle t\rangle^{-1}(\|V_0\|_3+\|V\|_3^2).
\end{equation}
For the estimate on $D_4$, by
$B=-\mathcal{R}_{11}B+\mathcal{R}_{12}b$,
and
\begin{equation}\label{011}
\|\mathcal{R}_{12}b\|_{\F L^1(D_4)}
\lesssim\ \|\mathcal{R}_{1}\langle\na\rangle b\|_{\F L^1(D_4)}
\lesssim\ \langle t\rangle^{-1}(\|V_0\|_3+\|V_0\|_3^2+\|V\|_3^2+\|V\|_3^3),
\end{equation}
it suffices to bound $\|\mathcal{R}_{11}B\|_{\F L^1(D_4)}$.
By (\ref{rf2}), (\ref{End1})$_8$,  (\ref{End2})$_5$ for $l=2$ and (\ref{ine4}),  we deduce
$$
\begin{aligned}
\|\mathcal{R}_{11}L_B\|_{\F L^1(D_4)}
\lesssim&\ \|\mathcal{R}_{11}M_1(\p,t)v_0\|_{\F L^1(D_4)}+\|\mathcal{R}_{11}M_2(\p,t)b_0\|_{\F L^1(D_4)}\\
\lesssim&\ \langle t\rangle^{-1}\|(v_0,b_0)\|_{\F L^1},
\end{aligned}
$$
$$
\begin{aligned}
  \|\mathcal{R}_{11}NL_{B1}\|_{\F L^1(D_4)}
  \lesssim&\ \int_0^t
  \|\mathcal{R}_{11}M_1(\p,t-\tau)(\U\cd\na \U,\B\cd\na\B)\|_{\F L^1(D_4)}d\tau\\
  \lesssim&\ \int_0^t \LH^{-1}\|(\U\cd\na \U,\B\cd\na\B)\|_{\F L^1}d\tau\\
  \lesssim&\ \int_0^t
  \LH^{-1}\langle \tau\rangle^{-1.1}d\tau\|V\|_3^2
  \lesssim\ \langle t\rangle^{-1}\|V\|_3^2,
\end{aligned}
$$
and
$$
\begin{aligned}
  \|\mathcal{R}_{11}NL_{B2}\|_{\F L^1(D_4)}
  \lesssim&\ \int_0^t \|\mathcal{R}_{11}M_2(\p,t-\tau)(\U\cd\na B,\B\cd\na v)\|_{\F L^1(D_4)}d\tau\\
  \lesssim&\ \int_0^t \LH^{-1}
  \|(\U\cd\na B,\B\cd\na v)\|_{\F L^1}d\tau\\
  \lesssim&\ \int_0^t \LH^{-1}
  \langle \tau\rangle^{-1.1}d\tau\|V\|_3^2
  \lesssim\ \langle t\rangle^{-1}\|V\|_3^2.
\end{aligned}
$$
So we achieve
\begin{equation}\label{BB8}
\|\mathcal{R}_{11}B\|_{\F L^1(D_4)}
\lesssim\ \langle t\rangle^{-1}(\|V_0\|_3+\|V\|_3^2).
\end{equation}
Finally, it follows (\ref{4.4})$_3$ by using (\ref{BB7}), (\ref{BB8}) and (\ref{011}).
\subsection{Proof of (\ref{endend})}
(\ref{endend}) is a direct consequence by adding (\ref{H8}), (\ref{4.1}), (\ref{4.2}), (\ref{4.3}), (\ref{4.4}) and $\|V\|_3^2\lesssim\ \|V\|_3^\frac{3}{2}+\|V\|_3^3$.

\vskip .4in
\section*{Acknowledgements}
We would like to thank Professor Zhifei Zhang for his  suggestions and helpful comments. This work was supported by the NSF of the Jiangsu Higher Education Institutions of China
(18KJB110018), the NSF of Jiangsu Province BK20180721.

\appendix
\section{}
\label{app}
In this section, we give the proof of some lemmas.
\begin{proof}[Proof of Lemma \ref{M1}]
Thanks to (\ref{rf2}), (\ref{D21})$_7$ for $k=0$ and (\ref{D21})$_1$ for $k=1$ and $r=2$, we  get
$$
\begin{aligned}
\|M_1(\p,t)f\|_{\F L^2(D_4)}
\lesssim&\ \min\{\|\G_{1,2}e^{-\G_{2,2}t}\widehat{f}\|_{L^2(D_4)},
\|\G_{1,1}e^{-\G_{2,2}t}\widehat{|\na|^{-1}f}\|_{L^2(D_4)}\}\\
\lesssim&\ \LHH^{-\frac{1}{2}}\min\{\|f\|_{L^1\cap L^2},\||\na|^{-1}f\|_{L^2}\},
\end{aligned}$$
which yields  (\ref{End1})$_1$.
It follows (\ref{End1})$_2$ by
using (\ref{rf2}),
$$\||\na|M_1(\p,t)f\|_{\F L^r(D_4)}
\lesssim\ \|\G_{1,1}e^{-\G_{2,2}t}\widehat{f}\|_{L^r(D_4)}
$$
and (\ref{D21})$_1$ for $k=1$.
(\ref{End1})$_3$ can be obtained by
using (\ref{rf2}),
$$\|\p_xM_1(\p,t)f\|_{\F L^r(D_4)}
\lesssim\ \|\G_{2,2}e^{-\G_{2,2}t}\widehat{f}\|_{L^r(D_4)}
$$
and (\ref{D21})$_1$ for $k=2$.
Combining with (\ref{rf2}),
$$\|M_1(\p,t)f\|_{\F L^r(D_4)}
\lesssim\ \|\G_{1,2}e^{-\G_{2,2}t}\widehat{f}\|_{L^r(D_4)},
$$
(\ref{D21})$_2$ for $k=1$ and $\delta=0.01$
  leads  (\ref{End1})$_4$.
Combining with (\ref{rf2}),
$$\||\na|^{-1}M_1(\p,t)f\|_{\F L^1(D_{42})}
\lesssim\ \|\G_{1,3}e^{-\G_{2,2}t}\widehat{f}\|_{L^1(D_{42})}
$$
and (\ref{D21})$_6$ for $k=0$, $q=p=4/3$,
we can get (\ref{End1})$_5$.
It follows (\ref{End1})$_6$ by
using (\ref{rf2}),
$$\|\mathcal{R}_1M_1(\p,t)f\|_{\F L^1(D_{42})}
\lesssim\ \|\G_{2,3}e^{-\G_{2,2}t}\widehat{f}\|_{L^1(D_{42})},
$$
and
(\ref{D21})$_4$ for $k=1$.
It follows (\ref{End1})$_7$ by
using (\ref{rf2}),
$$\|\mathcal{R}_1M_1(\p,t)f\|_{\F L^1(D_{4})}
\lesssim\ \|\G_{2,3}e^{-\G_{2,2}t}\widehat{f}\|_{L^1(D_4)},
$$
and
(\ref{D21})$_3$ for $k=1$. Applying
$$\||\na|\mathcal{R}_1M_1(\p,t)f\|_{\F L^1(D_{4})}
+\|\mathcal{R}_{11}M_1(\p,t)f\|_{\F L^1(D_{4})}
\lesssim\ \|\G_{2,2}e^{-\G_{2,2}t}\widehat{f}\|_{L^1(D_4)},
$$
and (\ref{D21})$_1$ for $k=2$ and $r=1$ can lead (\ref{End1})$_8$. Similarly, using
$$\|\p_x\mathcal{R}_1M_1(\p,t)f\|_{\F L^1(D_{4})}
+\||\na|\mathcal{R}_{11}M_1(\p,t)f\|_{\F L^1(D_{4})}
\lesssim\ \|\G_{3,3}e^{-\G_{2,2}t}\widehat{f}\|_{L^1(D_4)},
$$
and (\ref{D21})$_1$ for $k=3$ and $r=1$ can lead (\ref{End1})$_9$.
\end{proof}

\begin{proof}[Proof of Lemma \ref{M2}]
Using (\ref{rf2}) and (\ref{D21})$_1$ for $k=0$, one can easily get
(\ref{End2})$_1$. By (\ref{rf2}),
$$\|\p_xM_2(\p,t)f\|_{\F L^r(D_{4})}
\lesssim\ \||\na|\G_{1,1}e^{-\G_{2,2}t}\widehat{f}\|_{L^r(D_4)},
$$
and (\ref{D21})$_1$ for $k=1$, we can obtain (\ref{End2})$_2$.
Similarly, one can also get (\ref{End2})$_3$. Using a similar way leading (\ref{I22}) for $k=0$, we can get (\ref{End2})$_4$. Thanks to
$$\|\mathcal{R}_1^lM_2(\p,t)f\|_{\F L^1(D_{4})}
\lesssim\ \LHH^{-\frac{l}{2}}\|M_2(\p,t)f\|_{\F L^1(D_{4})}
\lesssim\ \LHH^{-\frac{l}{2}} \|\G_{0,0}e^{-\G_{2,2}t}\widehat{f}\|_{L^1(D_4)},
$$
it follows (\ref{End2})$_5$ by
using (\ref{D21})$_1$ for $k=0$ and $r=1$. Due to
$$\|\p_x\mathcal{R}_1M_2(\p,t)f\|_{\F L^1(D_{4})}
\lesssim\ \||\na|\mathcal{R}_1^2M_2(\p,t)f\|_{\F L^1(D_{4})},$$
it follows (\ref{End2})$_6$ by using (\ref{End2})$_5$.
\end{proof}

\begin{proof}[Proof of Lemma \ref{M3}]
Using (\ref{rf2}),  (\ref{decay1})$_2$ for $k=1$,  (\ref{D21})$_1$ for $k=1$ and $r=2$, we have
$$
\|M_3(\p,t)f\|_{\F L^2(D_{4})}
\lesssim\ \|e^{\frac{1}{2}t\De}f\|_{\F L^2(D_{4})}+
\||\xe|^{-1}\G_{1,1}e^{-\G_{2,2}t}\widehat{f}\|_{L^2(D_4)}
\lesssim\ \LHH^{-\frac{1}{2}}\||\na|^{-1}f\|_{H^1}.
$$
Using (\ref{rf2}),  (\ref{decay1})$_2$ for $k=0$ and (\ref{D21})$_7$ for $k=0$, we have
$$
\|M_3(\p,t)f\|_{\F L^2(D_{4})}
\lesssim\ \|e^{\frac{1}{2}t\De}f\|_{\F L^2(D_{4})}+
\|\G_{1,2}e^{-\G_{2,2}t}\widehat{f}\|_{L^2(D_4)}
\lesssim\ \LHH^{-\frac{1}{2}}\|f\|_{L^1\cap L^2}.
$$
Thus, we complete the proof of (\ref{End3})$_1$. Using
(\ref{rf2}), one can get the first estimate of (\ref{End3})$_2$ by applying
$$
\|M_3(\p,t)f\|_{\F L^1(D_{4})}
\lesssim\ \|e^{\frac{1}{2}t\De}f\|_{\F L^1(D_{4})}+
\|\G_{2,4}e^{-\G_{2,2}t}\widehat{f}\|_{L^1(D_4)},
$$
(\ref{decay1})$_4$ for $k=0$, and (\ref{D21})$_5$ for $k=0$.
Using
(\ref{rf2}), one can get the second estimate of (\ref{End3})$_2$ by applying
$$
\|M_3(\p,t)f\|_{\F L^1(D_{4})}
\lesssim\ \||\na|e^{\frac{1}{2}t\De}|\na|^{-1}f\|_{\F L^1(D_{4})}+
\|\G_{2,3}e^{-\G_{2,2}t}|\xe|^{-1}\widehat{f}\|_{L^1(D_4)},
$$
(\ref{decay1})$_4$ for $k=1$, and (\ref{D21})$_3$ for $k=1$.
Using
(\ref{rf2}), one can get the first estimate of (\ref{End3})$_3$ by applying
$$
\|\p_yM_3(\p,t)f\|_{\F L^2(D_{4})}
\lesssim\ \||\na|e^{\frac{1}{2}t\De}f\|_{\F L^2(D_{4})}+
\|\G_{1,1}e^{-\G_{2,2}t}\widehat{f}\|_{L^2(D_4)},
$$
(\ref{decay1})$_2$ for $k=1$, and (\ref{D21})$_1$ for $k=1$.
Using
(\ref{rf2}), one can get the second estimate of (\ref{End3})$_3$ by applying
$$
\|\p_yM_3(\p,t)f\|_{\F L^2(D_{4})}
\lesssim\ \||\na|^2e^{\frac{1}{2}t\De}|\na|^{-1}f\|_{\F L^2(D_{4})}+
\|\G_{2,2}e^{-\G_{2,2}t}|\xe|^{-1}\widehat{f}\|_{L^2(D_4)},
$$
 (\ref{decay1})$_2$ for $k=2$, and (\ref{D21})$_1$ for $k=2$.
Using
(\ref{rf2}), one can get the third estimate of (\ref{End3})$_3$ by applying
$$
\|\p_yM_3(\p,t)f\|_{\F L^2(D_{4})}
\lesssim\ \||\na|e^{\frac{1}{2}t\De}f\|_{\F L^2(D_{4})}+
\|\G_{2,3}e^{-\G_{2,2}t}\widehat{f}\|_{L^2(D_4)},
$$
(\ref{decay1})$_1$ for $k=1$, and (\ref{D21})$_7$ for $k=1$.
So we conclude the proof of (\ref{End3})$_3$. For (\ref{End3})$_4$, we only show the case $r=2$, and other cases can be bounded similarly.
Using $|\xi|\lesssim |\xe|^2$,
$$
\|\p_xM_3(\p,t)f\|_{\F L^2(D_{4})}
\lesssim\ \|\p_xe^{\frac{1}{2}t\De}f\|_{\F L^2(D_{4})}+
\|\G_{2,2}e^{-\G_{2,2}t}\widehat{f}\|_{L^2(D_4)},
$$
one can get (\ref{End3})$_4$ for $r=2$ by (\ref{decay1})$_2$ for $k=2$ and (\ref{D21})$_1$ for $k=2$.
Using $|\xi|\lesssim |\xe|^2$,
$$
\|\p_xM_3(\p,t)f\|_{\F L^1(D_{4})}
\lesssim\ \|\p_xe^{\frac{1}{2}t\De}f\|_{\F L^1(D_{4})}+
\|\G_{2,4}e^{-\G_{2,2}t}\widehat{\p_xf}\|_{L^1(D_4)},
$$
one can get the first bound of (\ref{End3})$_5$  by (\ref{decay1})$_4$ for $k=2$ and (\ref{D21})$_5$ for $k=0$.
Using $|\xi|\lesssim |\xe|^2$,
$$
\|\p_xM_3(\p,t)f\|_{\F L^1(D_{4})}
\lesssim\ \|\p_xe^{\frac{1}{2}t\De}f\|_{\F L^1(D_{4})}+
\|\G_{3,4}e^{-\G_{2,2}t}\widehat{f}\|_{L^1(D_4)},
$$
one can get the second bound of (\ref{End3})$_5$  by (\ref{decay1})$_4$ for $k=2$ and (\ref{D21})$_3$ for $k=2$.
Using
$$\|e^{ct\De}f\|_{\F L^1}
\lesssim\ \LHH^{-\frac{3}{4}}(\|f\|_{L^1_y(L^2_x)}
+\|\widehat{f}\|_{L^1}),$$
which can be proved by using the similar arguments yielding (\ref{q1}), and
$$\|\G_{2,4}e^{-\G_{2,2}t}\widehat{f}\|_{L^1(D_4)}
\le\ \|\G_{2,2}e^{-\G_{2,2}t}\widehat{f}\|_{L^1(D_{41})}
+\|\G_{2,4}e^{-\G_{2,2}t}\widehat{f}\|_{L^1(D_{42})}
\le\ \LHH^{-\frac{3}{4}}(\|\widehat{f}\|_{L^1}+\|f\|_{L^1_y(L^2_x)}),
$$
which can be obtained by using (\ref{D21})$_1$ for $k=2$ and (\ref{D21})$_6$ for $k=1$, $p=\frac{4}{3}$ and $q=2$, we have
$$
\|\p_xM_3(\p,t)f\|_{\F L^1(D_{4})}
\lesssim\ \|e^{\frac{1}{2}t\De}\p_xf\|_{\F L^1(D_{4})}+
\|\G_{2,4}e^{-\G_{2,2}t}\widehat{\p_xf}\|_{L^1(D_4)}
\lesssim\ \LHH^{-\frac{3}{4}}(\|\widehat{\p_xf}\|_{L^1}+\|\p_xf\|_{L^1_y(L^2_x)}),
$$
which completes the proof of the third bound of (\ref{End3})$_5$.
Using (\ref{decay1})$_2$ for $k=2$, (\ref{D21})$_1$ for $k=2$
and
$$
\|\De M_3(\p,t)f\|_{\F L^2(D_{4})}
\lesssim\ \|\De e^{\frac{1}{2}t\De}f\|_{\F L^2(D_{4})}+
\|\G_{2,2}e^{-\G_{2,2}t}\widehat{f}\|_{L^2(D_4)},
$$
we can get (\ref{End3})$_6$.
Using $|\xi|\lesssim|\xe|^2$, (\ref{decay1})$_4$ for $k=5$, (\ref{D21})$_1$ for $k=5$
and
$$
\|\p_x^2\mathcal{R}_1 M_3(\p,t)f\|_{\F L^1(D_{4})}
\lesssim\ \|\p_x^2\mathcal{R}_1 e^{\frac{1}{2}t\De}f\|_{\F L^1(D_{4})}+
\|\G_{5,5}e^{-\G_{2,2}t}\widehat{f}\|_{L^1(D_4)},
$$
we can get (\ref{End3})$_7$. Using $|\xi|\lesssim|\xe|^2$, (\ref{decay1})$_4$ for $k=3$, (\ref{D21})$_1$ for $k=3$
and
$$
\|\p_x|\na| M_3(\p,t)f\|_{\F L^1(D_{4})}
\lesssim\ \|\p_x |\na| e^{\frac{1}{2}t\De}f\|_{\F L^1(D_{4})}+
\|\G_{3,3}e^{-\G_{2,2}t}\widehat{f}\|_{L^1(D_4)},
$$
we can get (\ref{End3})$_8$ for $r=1$. Other cases $1<r\le2$ can be bounded similarly.
Using $|\xi|\lesssim|\xe|^2$, (\ref{decay1})$_4$ for $k=4$, (\ref{D21})$_1$ for $k=4$
and
$$
\|\p_x^2 M_3(\p,t)f\|_{\F L^1(D_{4})}
\lesssim\ \|\p_x^2  e^{\frac{1}{2}t\De}f\|_{\F L^1(D_{4})}+
\|\G_{4,4}e^{-\G_{2,2}t}\widehat{f}\|_{L^1(D_4)},
$$
we can get (\ref{End3})$_9$. Using $|\xi|\lesssim|\xe|^2$, (\ref{decay1})$_4$ for $k=2$, (\ref{D21})$_1$ for $k=2$
and
$$
\|\mathcal{R}_1 M_3(\p,t)f\|_{\F L^1(D_{4})}
\lesssim\ \|\p_x  e^{\frac{1}{2}t\De}|\na|^{-1}f\|_{\F L^1(D_{4})}+
\|\G_{2,2}e^{-\G_{2,2}t}|\xe|^{-1}\widehat{f}\|_{L^1(D_4)},
$$
we can get (\ref{End3})$_{10}$. Using $|\xi|\lesssim|\xe|^2$, (\ref{decay1})$_4$ for $k=3$, (\ref{D21})$_1$ for $k=3$
and
$$
\|\p_x\mathcal{R}_1 M_3(\p,t)f\|_{\F L^1(D_{4})}
\lesssim\ \|\p_x\mathcal{R}_1  e^{\frac{1}{2}t\De}f\|_{\F L^1(D_{4})}+
\|\G_{3,3}e^{-\G_{2,2}t}\widehat{f}\|_{L^1(D_4)},
$$
we can get (\ref{End3})$_{11}$.
\end{proof}
\begin{proof}[Proof of Lemma \ref{ne1}]
\underline{(\ref{ine1})$_1$} By H\"{o}lder's inequality, product estimate in one dimension, interpolation inequality and
\begin{equation}\label{p222}
\|\na P_\backsim(\vec{R'}\cd\B)\|_{L^2}
\lesssim\ \|\B\|_{L^2}\lesssim\ \LHH^{-\frac{1}{4}}\|V\|_3,
\end{equation}
  we have
$$\|\na P_\backsim(\vec{R'}\cd\B) b\|_{L^1}+\|\B b\|_{L^1}\le\ \|\B\|_{L^2}\|b\|_{L^2}
\lesssim\ \LHH^{-\frac{1}{2}}\|V\|_3^2,$$
$$\||\p_x|^\beta(bb)\|_{L^1_x}
\lesssim\ \|b\|_{L^2_x}\||\p_x|^\beta b\|_{L^2_x}
\lesssim\ \|b\|_{L^2_x}^{2-\beta}
\|\p_xb\|_{L^2_x}^\beta,$$
which yields
$$\||\p_x|^\beta(bb)\|_{L^1}
\lesssim\ \|b\|_{L^2}^{2-\beta}
\|\p_xb\|_{L^2}^\beta
\lesssim\ \LHH^{-\frac{1+\beta}{2}}\|V\|_3^2.$$
\underline{(\ref{ine1})$_2$} Using H\"{o}lder's inequality and $\p_y v=-\p_xu$, we have
$$\|\p_yv_{<\LHH^{-8}}b\|_{L^1}
+\|\p_yv_{>2\LHH^{-0.05}}b\|_{L^1}
\lesssim\ \|\p_xu\|_{L^2}\|b\|_{L^2}
\lesssim\ \LHH^{-\frac{5}{4}}\|V\|_3^2.$$
Thanks to $\|\p_x P_\backsim (\vec{R'}\cd\B)\|_{L^2}
\lesssim\ \|B\|_{L^2}$, we can get
$$\|\p_xP_\backsim(\vec{R'}\cd\B)\p_yu\|_{L^1}
\le\ \|\p_xP_\backsim(\vec{R'}\cd\B)\|_{L^2}
\|\p_yu\|_{L^2}
\lesssim\ \|B\|_{L^2}\|\p_yu\|_{L^2}
\lesssim\ \LHH^{-\frac{5}{4}}\|V\|_3^2.$$
\underline{(\ref{ine1})$_3$} Using product estimate and interpolation inequality, we have
$$\||\na|^\beta(\U\otimes\U)\|_{L^1}
\lesssim\ \|\U\|_{L^2}\||\na|^\beta \U\|_{L^2}
\lesssim\ \|\U\|_{L^2}^{2-\beta}\|\na\U\|_{L^2}^\beta
\lesssim\ \LHH^{-1-\frac{\beta}{4}}\|V\|_3^2$$
and
$$
\begin{aligned}
\||\na|^{0.99}(\B B)\|_{L^1}
\lesssim&\  \||\na|^{0.99}\B\|_{L^2} \|B\|_{L^2}
+\||\na|^{0.99} B\|_{L^2} \|\B\|_{L^2}\\
\lesssim&\ \LHH^{-0.21-0.5}\|V\|_3^2
+\LHH^{-0.25-0.6}\|V\|_3^2\\
\lesssim&\ \LHH^{-0.7}\|V\|_3^2,
\end{aligned}
$$
where we have used
$$\||\na|^{0.99}\B\|_{L^2}
\le\ \||\na|^{0.99}\B_{\le\LHH^{0.03}}\|_{L^2}
+\||\na|^{0.99}\B_{>\LHH^{0.03}}\|_{L^2}\lesssim
\LHH^{-0.21} \|V\|_3$$
and
$$\||\na|^{0.99}B\|_{L^2}
\lesssim\ \|B\|_{L^2}^{0.01}\|\na B\|_{L^2}^{0.99}
\lesssim\ \LHH^{-0.6}\|V\|_3.$$
This concludes the proof of Lemma \ref{ne1}.
\end{proof}
\begin{proof}[Proof of Lemma \ref{ne2}]
\underline{(\ref{ine2})$_1$}  Using interpolation inequality
$$\|f\|_{L^\frac{2p}{2-p}_x}\lesssim\ \|f\|_{L^2_x}^\frac{1}{p}\|\p_xf\|_{L^2_x}^{1-\frac{1}{p}},$$
 we obtain
  $$
  \begin{aligned}
\|b\p_xb\|_{L^1_y(L^p_x)}
\lesssim&\ \big\| \|b\|_{L^\frac{2p}{2-p}_x}\|\p_xb\|_{L^2_x} \big\|_{L^1_y}
\lesssim\ \big\|  \|b\|_{L^2_x}^\frac{1}{p}\|\p_xb\|_{L^2_x}^{2-\frac{1}{p}}  \big\|_{L^1_y}\\
\lesssim&\
\|b\|_{L^2}^\frac{1}{p}
\|\p_xb\|_{L^2}^{2-\frac{1}{p}}
\lesssim\
\LHH^{-\frac{3}{2}+\frac{1}{2p}}\|V\|_3^2.
  \end{aligned}
  $$
\underline{(\ref{ine2})$_2$} We only give the estimate of $\|\langle\na\rangle^3(\B\cd\na u)\|_{L^1_x(L^2_y)}$, since other terms can be bounded similarly.
Using
\begin{equation}\label{f3}
\|\langle\na\rangle^k f\|_{L^1_x(L^2_y)}
\lesssim\  \| f\|_{L^1_x(L^2_y)}
+\|\na^k f\|_{L^1_x(L^2_y)},\ k\ge0,
\end{equation}
we have
$$
\begin{aligned}
\|\langle\na\rangle^3(\B\cd\na u)\|_{L^1_x(L^2_y)}
\lesssim&\ \|\langle\na\rangle^3(B\p_y u)\|_{L^1_x(L^2_y)}
+\|\langle\na\rangle^3(b\p_x u)\|_{L^1_x(L^2_y)}\\
\lesssim&\ \|B\p_y u\|_{L^1_x(L^2_y)}
+\|B\p_y \na^3u\|_{L^1_x(L^2_y)}+\ other\  similar\   terms.
\end{aligned}
$$
By interpolation inequality, we can get
$$\|B\p_y u\|_{L^1_x(L^2_y)}
\lesssim\ \|B\|_{L^2_x(L^\infty_y)}\|u\|_{L^2}
\lesssim\ \|B\|_{L^2}^\frac{1}{2}
\|\p_xb\|_{L^2}^\frac{1}{2}\|u\|_{L^2}
\lesssim\ \LHH^{-\frac{9}{8}}\|V\|_3^2.$$
Using
\begin{equation}\label{A2}
\|\na\langle \na\rangle^3 u\|_{L^2}
\le\ \|\na\langle \na\rangle^3 u_{<\LHH^{\frac{3}{28}}}\|_{L^2}
+\|\na\langle \na\rangle^3 u_{\ge\LHH^{\frac{3}{28}}}\|_{L^2}
\lesssim\ \LHH^{-\frac{3}{7}}\|V\|_3^2,
\end{equation}
then
$$
\begin{aligned}
\|B\p_y \na^3u\|_{L^1_x(L^2_y)}
\lesssim&\ \|B\|_{L^2_x(L^\infty_y)}
\|\p_y \na^3u\|_{L^2}
\lesssim\ \LHH^{-\frac{3}{7}-\frac{5}{8}}\|V\|_3^2
\lesssim\ \LHH^{-1.01}\|V\|_3^2.
\end{aligned}
$$
Thus
$$\|\langle\na\rangle^3(\B\cd\na u)\|_{L^1_x(L^2_y)}\lesssim\ \LHH^{-1.01}\|V\|_3^2.
$$
\underline{(\ref{ine2})$_3$}
Use (\ref{f3}), we only show the estimates of
$$\|v_{<\LHH^{-8}}\p_y\na^3b\|_{L^1_x(L^2_y)},\ {\rm and}\
\ \|v_{>2\LHH^{-0.05}}\p_y\na^3b\|_{L^1_x(L^2_y)},
$$
and other terms can be bounded similarly.
Using H\"{o}lder's inequality and interpolation inequality, one can get
$$
\begin{aligned}
\|v_{<\LHH^{-8}}\p_y\na^3b\|_{L^1_x(L^2_y)}
\lesssim&\ \|v_{<\LHH^{-8}}\|_{L^2_x(L^\infty_y)}\|\p_y\na^3 b\|_{L^2}\\
\lesssim&\ \|v\|_{L^2}^\frac{1}{2}\|\p_yv_{<\LHH^{-8}}\|_{L^2}^\frac{1}{2}
\|b\|_{H^4}\\
\lesssim&\ \LHH^{-2}\|V\|_3^2,\\
\|v_{>2\LHH^{-0.05}}\p_y\na^3b\|_{L^1_x(L^2_y)}
\lesssim&\ \|v_{>2\LHH^{-0.05}}\|_{L^2_x(L^\infty_y)}
\|\p_y\na^3b\|_{L^2}\\
\lesssim&\ \|v_{>2\LHH^{-0.05}}\|_{L^2}^\frac{1}{2}
\|\p_yv_{>2\LHH^{-0.05}}\|_{L^2}^\frac{1}{2}\\
&\ \times(\|\p_y\na^3b_{<\LHH^{0.05}}\|_{L^2}+
\|\p_y\na^3b_{\ge\LHH^{0.05}}\|_{L^2})\\
\lesssim&\ \LHH^{0.025}\|\na v\|_{L^2}
(\|\p_y\na^3b_{<\LHH^{0.05}}\|_{L^2}+
\|\p_y\na^3b_{\ge\LHH^{0.05}}\|_{L^2})\\
\lesssim&\ \LHH^{0.025-1}(\LHH^{0.2-0.25}+\LHH^{-0.2})\|V\|_3^2\\
\lesssim&\ \LHH^{-1.01}\|V\|_3^2.
\end{aligned}
$$
\underline{(\ref{ine2})$_4$} Using (\ref{f3}), we only estimate
$\|B\p_y \na^2b\|_{L^1_x(L^2_y)}$, and other terms can be controlled  similarly.
 Using interpolation  inequality, and
 \begin{equation}\label{py}
\|\langle\na\rangle^3b\|_{L^2}\lesssim\
\|\langle\na\rangle^3b_{<\LHH^{\frac{1}{32}}}\|_{L^2}+
\|\langle\na\rangle^3b_{\ge \LHH^{\frac{1}{32}}}\|_{L^2}
\lesssim\ \LHH^{-\frac{5}{32}}\|V\|_3,
\end{equation}
 we have
 $$\|B\p_y \na^2b\|_{L^1_x(L^2_y)}
 \lesssim\ \|B\|_{L^2_x(L^\infty_y)}\|\langle\na\rangle^3b\|_{L^2}
 \lesssim\ \LHH^{-\frac{5}{8}-\frac{5}{32}}\|V\|_3^2
 \lesssim\ \LHH^{-0.75}\|V\|_3^2.$$
 \underline{(\ref{ine2})$_5$} Using (\ref{f3}) again, we only bound
 $\|\p_x\langle\na\rangle^2P_\backsim u b \|_{L^1_x(L^2_y)} $, while other terms can be bounded similarly.
 Using interpolation inequality,  and
 $$\|\p_x\langle\na\rangle^3 u\|_{L^2}
 \le\ \|\p_x\langle\na\rangle^3 u_{<\LHH^{0.2}}\|_{L^2}
 +\|\p_x\langle\na\rangle^3 u_{\ge\LHH^{0.2}}\|_{L^2}
 \lesssim\ \LHH^{-0.8}\|V\|_3,$$
 we have
 $$
 \begin{aligned}
 \|\p_x\langle\na\rangle^2P_\backsim u b \|_{L^1_x(L^2_y)}
 \lesssim&\ \|\p_x\langle\na\rangle^2 u\|_{L^2_x(L^\infty_y)}
 \|b\|_{L^2}\\
 \lesssim&\  \|\p_x\langle\na\rangle^3 u\|_{L^2}
 \|b\|_{L^2}\\
 \lesssim&\ \LHH^{-1.05}\|V\|_3^2.
 \end{aligned}
 $$
\end{proof}

\begin{proof}[Proof of Lemma \ref{ne3}] By H\"{o}lder's inequality, and (\ref{p222}),
it is easy to  get the estimate of (\ref{ine3})$_1$ and  (\ref{ine3})$_2$. Let us begin with the estimate of (\ref{ine3})$_3$.\\
\underline{(\ref{ine3})$_3$} We only give the estimate of
$\|\B\cd\na \B\|_{H^2}$, while one can bound other terms by the similar way.  We have
$$\|\B\cd\na \B\|_{H^2}
\le\ \|b\p_x\na^2b\|_{L^2}+\ other\  similar\  terms.$$
Using
\begin{equation}\label{A1}
\|\p_x\na^2b\|_{L^2}
\lesssim\ \|\p_x\na^2b_{<\LHH^{\frac{1}{8}}}\|_{L^2}
+\|\p_x\na^2b_{\ge\LHH^{\frac{1}{8}}}\|_{L^2}
\lesssim\ \LHH^{-\frac{5}{8}}\|V\|_3^2,
\end{equation}
we have
$$\|b\p_x\na^2b\|_{L^2}
\lesssim\ \|b\|_{L^\infty}\|\p_x\na^2b\|_{L^2}
\lesssim\ \LHH^{-1.1}\|V\|_3^2.$$
Thus
$$\|\B\cd\na\B\|_{H^2}
\lesssim\ \LHH^{-1.1}\|V\|_3^2.$$
\underline{(\ref{ine3})$_4$}  Using
$$\|\p_xP_\backsim(\vec{R'}\cd\B)\|_{L^\infty}\lesssim\ \|\widehat{B}\|_{L^1}\lesssim\ \LHH^{-1}\|V\|_3$$
and (\ref{py}),
 one can get
 $$\|\p_xP_\backsim(\vec{R'}\cd\B)\p_yb\|_{L^2}
 \lesssim\ \langle t\rangle^{-1.1}\|V\|_3^2.$$
Then we can get the estimates of other terms by the similar way.\\
  \underline{(\ref{ine3})$_5$} Here we only bound
  $\|P_\backsim(\vec{R'}\cd\B)\p_x\na^2b\|_{L^2}$. Using (\ref{A1}),
  and
  \begin{equation}\label{PB}
  \begin{aligned}
  \|P_\backsim(\vec{R'}\cd\B)\|_{\F L^1}
  \lesssim&\ \||\na|^{-1}b\|_{\F L^1}+\sum_{\LHH^{-8}\le M\le 2\LHH^{-0.05}}
  \||\na|^{-1}P_M B\|_{\F L^1}\\
  \lesssim&\ \LHH^{-0.5}\|V\|_3+\sum_{\LHH^{-8}\le M\le 2\LHH^{-0.05}} M^{-1}
    \|P_M B\|_{\F L^1}\\
    \lesssim&\ \LHH^{-0.5}\|V\|_3+\sum_{\LHH^{-8}\le M\le 2\LHH^{-0.05}}
    \|P_M B\|_{ L^2}\\
    \lesssim&\ \LHH^{-0.5}\|V\|_3+\|B\|_{L^2}\sum_{\LHH^{-8}\le M\le 2\LHH^{-0.05}}
     M^{0.001}M^{-0.001}\\
     \lesssim&\ \LHH^{-0.5}\|V\|_3+\LHH^{0.008-0.5}\|V\|_3
     \lesssim\ \LHH^{-0.492}\|V\|_3.
  \end{aligned}
  \end{equation}
  we have
$$
\|P_\backsim(\vec{R'}\cd\B)\p_x\na^2b\|_{L^2}
\lesssim \|P_\backsim(\vec{R'}\cd\B)\|_{\F L^1}
\|\p_x\na^2b\|_{L^2}
\lesssim\ \LHH^{-1.1}\|V\|_3^2.
$$
  \underline{(\ref{ine3})$_6$} Here we only show the estimates of  $\|b\p_x^2\na^2b\|_{L^2}$  and $\|P_\backsim(\vec{R'}\cd\B)\p_y\na^3u\|_{L^2}$.
  Using
  $$\|\p_x\langle\na\rangle^3b\|_{L^2}
  \lesssim\ \|\p_x\langle\na\rangle^3b_{<\LHH^\frac{1}{8}}\|_{L^2}
  +\|\p_x\langle\na\rangle^3b_{\ge\LHH^\frac{1}{8}}\|_{L^2}
  \lesssim
  \ \LHH^{-\frac{1}{2}}\|V\|_3^2,$$
  we have
  $$\|b\p_x^2\na^2b\|_{L^2}
  \lesssim\ \|b\|_{L^\infty}\|\p_x^2\na^2b\|_{L^2}
  \lesssim\ \LHH^{-1}\|V\|_3^2.$$
  Thanks to (\ref{PB}) and (\ref{A2}),
  we can get the  estimate of $\|P_\backsim(\vec{R'}\cd\B)\p_y\na^3u\|_{L^2}$ by using H\"{o}lder's inequality.\\
   \underline{(\ref{ine3})$_7$} By using  (\ref{PB}) and (\ref{py}), one can easily get this estimate.
\end{proof}

\begin{proof}[Proof of Lemma \ref{ne4}]
\underline{(\ref{ine4})$_1$} The first three terms can be bounded easily. By
$$\|\na P_\backsim(\vec{R'}\cd\B)\|_{\F L^1}\lesssim\ \|\B\|_{\F L^1}
\lesssim\ \LHH^{-\frac{1}{2}}\|V\|_3,$$
we can get
$$\|[\p_x(\p_yP_\backsim(\vec{R'}\cd\B)b),\na P_\backsim(\vec{R'}\cd\B)u]\|_{\F L^1}
\lesssim\ \|\B\|_{\F L^1}(\|\p_xb\|_{\F L^1}+\|\widehat{u}\|_{L^1})
\lesssim\ \LHH^{-\frac{3}{2}}\|V\|_3^2.$$
\underline{(\ref{ine4})$_2$}
Like the previous procedure, it suffices to estimate $\|b\p_{xx}u\|_{\F L^1}$ and $\|\p_yb\p_{x}v\|_{\F L^1}$. Using
\begin{equation}\label{ub}
\begin{aligned}
\|\langle\na\rangle\p_{x}u\|_{\F L^1}
\le&\ \|\langle\na\rangle\p_{x}u_{\ge \LHH^{0.5}}\|_{\F L^1}
+\|\langle\na\rangle\p_{x}u_{< \LHH^{0.5}}\|_{\F L^1}
\lesssim\ \LHH^{-0.75}\|V\|_3,\\
\|\langle\na\rangle b\|_{\F L^1}\le&\
\|\langle\na\rangle b_{<\LHH^{\frac{5}{69}}}\|_{\F L^1}+\|\langle\na\rangle b_{\ge\LHH^\frac{5}{69}}\|_{\F L^1}
\lesssim\ \LHH^{-0.42}\|V\|_3,\\
\|\p_xv\|_{\F L^1}
\lesssim&\ \|\p_xv_{<\LHH^{0.15}}\|_{\F L^1}+\|\p_xv_{\ge\LHH^{0.15}}\|_{\F L^1}
\lesssim\ \LHH^{-0.85}\|V\|_3,
\end{aligned}
\end{equation}
one gets
$$\|b\p_{xx}u\|_{\F L^1}
\lesssim\ \|\widehat{b}\|_{L^1}\|\p_{xx}u\|_{\F L^1}
\lesssim\ \LHH^{-\frac{5}{4}}\|V\|_3^2$$
and
$$
\|\p_yb\p_xv\|_{\F L^1}
\lesssim\ \|\p_yb\|_{\F L^1}\|\p_x v\|_{\F L^1}
\lesssim\ \LHH^{-\frac{5}{4}}\|V\|_3^2.
$$
\underline{(\ref{ine4})$_3$}  Using (\ref{PB}) and the fact that
 $P_\thickapprox$ can be bounded by the  process dealing with  $P_\thicksim$, we can get the desired estimate by H\"{o}lder's inequality.\\
\underline{(\ref{ine4})$_4$} Here we only show the estimate of $\|P_\backsim(\vec{R'}\cd\B)\p_y\na^2u\|_{\F L^1}$.
Using
$$
\|\langle\na\rangle^3u\|_{\F L^1}
\lesssim\ \|\langle\na\rangle^3u_{<\LHH^\frac{10}{69}}\|_{\F L^1}
+\|\langle\na\rangle^3u_{\ge\LHH^\frac{10}{69}}\|_{\F L^1}
\lesssim\ \LHH^{-\frac{39}{69}}\|V\|_3
$$
and (\ref{PB}),
we have
$$\|P_\backsim(\vec{R'}\cd\B)\p_y\na^2u\|_{\F L^1}
\lesssim\ \|P_\backsim(\vec{R'}\cd\B)\|_{\F L^1}
\|\p_y\na^2u\|_{\F L^1}
\lesssim\ \LHH^{-1.01}\|V\|_3^2.$$
\underline{(\ref{ine4})$_5$} We only give the estimate of $\|\p_x(b\p_xb)\|_{\F L^1}$. Using
$$\|\p_x^2b\|_{\F L^1}
\le\ \|\p_x^2b_{<\LHH^\frac{10}{59}}\|_{\F L^1}
+\|\p_x^2b_{\ge\LHH^\frac{10}{59}}\|_{\F L^1}
\lesssim\ \LHH^{-0.83}\|V\|_3,$$
we have
$$\|\p_x(b\p_xb)\|_{\F L^1}
\le\ \|\p_xb\p_xb\|_{\F L^1}+\|b\p_x^2b\|_{\F L^1}
\lesssim\ \LHH^{-1.3}\|V\|_3^2.$$
\underline{(\ref{ine4})$_6$} Using the same way yielding (\ref{ub})$_3$, we can get
$$\|\na B\|_{\F L^1}\lesssim\ \LHH^{-0.85}\|V\|_3,$$
which, together
with (\ref{ub})$_2$ yields
$$\|\na(B\B)\|_{\F L^1}
\lesssim\ \LHH^{-1.2}\|V\|_3^2.$$
Other terms can be bounded similarly.\\
\underline{(\ref{ine4})$_7$}  We only estimate  $\|\na^2 B\p_y b\|_{\F L^1}$, while other terms can be bounded similarly. Using
the same way yielding (\ref{ub})$_3$, one has
$$\|\na^2 B\|_{\F L^1}\lesssim\ \LHH^{-0.71}\|V\|_3,$$
which,  with (\ref{ub})$_2$ leads
$$\|\na^2 B\p_y b\|_{\F L^1}
\lesssim\ \|\na^2 B\|_{\F L^1}
\|\p_y b\|_{\F L^1}
\lesssim\ \LHH^{-1.1}\|V\|_3^2.$$
Using (\ref{ub})$_2$ and $(\ref{PB})$,
we have
$$\|P_\backsim(\vec{R'}\cd\B)\p_yb\|_{\F L^1}
\lesssim\ \|P_\backsim(\vec{R'}\cd\B)\|_{\F L^1}
\|\p_yb\|_{\F L^1}
\lesssim\ \LHH^{-0.9}\|V\|_3^2.$$
\end{proof}
\vskip .2in

\end{document}